\numberwithin{equation}{section}
\theoremstyle{plain}
\newtheorem{theorem}[equation]{Theorem}
\newtheorem{proposition}[equation]{Proposition}
\newtheorem{lemma}[equation]{Lemma}
\newtheorem{corollary}[equation]{Corollary}
\theoremstyle{remark}
\newtheorem{remark}[equation]{Remark}
\theoremstyle{definition}
\newtheorem{definition}[equation]{Definition}
\newcommand{\lra}{\longrightarrow}
\newcommand{\ra}{\to}
\newcommand{\restr}{\mbox{\Large \(|\)\normalsize}}
\newcommand{\C}{{\mathcal C}}
\renewcommand{\c}{\mathbb C}
\newcommand{\D}{{\mathcal D}}
\newcommand{\F}{{\mathcal F}}
\newcommand{\g}{{\mathcal G}}
\renewcommand{\H}{\mathbb H}
\renewcommand{\l}{{\mathcal L}}
\renewcommand{\L}{\mathbb L}
\newcommand{\N}{\mathbb N}
\renewcommand{\P}{{\mathbb P}}
\newcommand{\R}{\mathbb R}
\renewcommand{\S}{{\mathcal S}}
\newcommand{\Z}{\mathbb Z}
\newcommand{\cent}{\operatorname{Center}}
\newcommand{\cut}{\operatorname{Cut}}
\newcommand{\dmu}{\D_\mu}
\newcommand{\excess}{\operatorname{excess}}
\newcommand{\horiz}{\operatorname{hor}}
\newcommand{\hs}{\operatorname{HS}}
\newcommand{\Int}{\operatorname{Int}}
\newcommand{\Intmu}{\operatorname{Int_\mu}}
\newcommand{\isom}{\operatorname{Isom}}
\newcommand{\Lip}{\operatorname{Lip}}
\newcommand{\loc}{\operatorname{loc}}
\newcommand{\realpart}{\operatorname{Re}}
\newcommand{\spt}{\operatorname{spt}}
\newcommand{\width}{\operatorname{width}}
\def\D{\partial}
\newcommand{\al}{\alpha}
\def\de{\delta}
\def\De{\Delta}
\def\eps{\epsilon}
\def\ga{\gamma}
\def\Ga{\Gamma}
\def\la{\lambda}
\def\lra{\longrightarrow}
\def\om{\omega}
\def\ra{\to}
\def\si{\sigma}
\def\Si{\Sigma}
\def\be{\beta}
\def\th{\theta}
\def\defeq{:=}
\newcommand{\ol}{\overline}
\def\XXint#1#2#3{{\setbox0=\hbox{$#1{#2#3}{\int}$}
      \vcenter{\hbox{$#2#3$}}\kern-.5\wd0}}
\begin{document}

\title[Metric differentiation, monotonicity, and $L^1$]{Metric differentiation,
monotonicity and maps to~$L^1$}

\begin{abstract}
This is one of a series of papers on Lipschitz maps from metric spaces
to $L^1$.  Here we present the details of results which were 
announced in  \cite[Section 1.8]{ckbv}: a new approach
to the infinitesimal structure of Lipschitz maps
into $L^1$, and, as a first application,
an alternative proof of the main theorem of \cite{ckbv},
that the Heisenberg group does not
admit a bi-Lipschitz embedding in $L^1$.  The proof 
uses the metric differentiation theorem of Pauls \cite{pauls}
and the cut metric description in
\cite{ckbv} to reduce the nonembedding argument
to a classification of monotone subsets of the
Heisenberg group.  A quantitative version of this
classification argument is used
in our forthcoming joint paper with Assaf Naor \cite{ckn}.
\end{abstract}

\date{\today}
\author{Jeff Cheeger}
\thanks{Research supported in part by NSF grant DMS-0704404}
\author{Bruce Kleiner}
\thanks{Research supported in part by NSF grant DMS-0805939}
\maketitle

\setlength{\parskip}{.5ex}

{\small\tableofcontents}

\setlength{\parskip}{1.5ex}

\section{Introduction}
\label{sec-intro}

In this paper, we continue our  investigation of Lipschitz maps of metric
spaces into Banach spaces \cite{ckbv,ckdppi,laaksoembed}, which is 
motivated  by the role of bi-Lipschitz embedding problems in theoretical
computer science \cite{liniallondonrabinovich,aumannrabani,linialicm,leenaor},  
earlier developments in the infinitesimal geometry of 
metric measure spaces \cite{pansu,Cheeger}, and the geometry of Banach
spaces \cite[Chapters 6-7]{benlin},\cite{bourgain}.  Our main purpose here is to 
present the details of an approach to
Lipschitz maps into $L^1$ announced in \cite[Section 1.8]{ckbv},
which gives new insight into both 
embeddability and non-embeddability questions;  as a first application,
we give a new proof of
a (slightly stronger version of) the main result of \cite{ckbv}.  Other
implications will be pursued in subsequent papers, see below for more discussion.

\bigskip\bigskip
\subsection*{An approach to the infinitesimal structure of Lipschitz maps into
$L^1$}
We begin by describing the  approach in general terms, cf 
\cite[Section 1.8]{ckbv}.  

Let $X$
be a metric  space equipped with a Borel measure $\mu$, and suppose $f:X\ra L^1$
is a Lipschitz map.
Under certain assumptions on the pair $(X,\mu)$ (e.g. if it is doubling and
satisfies a Poincare
inequality \cite{HeKo}), one can prove a generalization of
Kirchheim's metric differentiation theorem
\cite{kirchheim}.  This says, roughly speaking, that
for $\mu$-a.e. $x\in X$,  if one blows up $f$ near $x$ then it looks more and
more like a geodesic map, when restricted to certain curves.  Passing to 
limits, one arrives at a new map $f_\infty:X_\infty\ra L^1$, where $X_\infty$
is blow-up of $X$ containing a distinguished class of geodesics called
 {\bf lines}, and the restriction of $f_\infty$ to each line $L\subset X_\infty$
gives a constant speed parametrization of some  geodesic $f_\infty(L)\subset L^1$,
i.e. for all $x_1,\,x_2\in L$,
\begin{equation}
\label{eqn-geodesic}
\|f_\infty(x_1)-f_\infty(x_2)\|_{L^1}=c_L\,d(x_1,x_2)\,,
\end{equation}
 where $c_L\in [0,\infty)$ is a constant depending
on $L$.
Here $f_\infty$ has Lipschitz constant (respectively bi-Lipschitz constant) no 
larger than that of $f$.  

Let $\rho_\infty:X_\infty\times X_\infty\ra \R$ denote the
pseudo-distance given by $\rho_\infty(x_1,x_2)=\|f_\infty(x_1)-f_\infty(x_2)\|_{L^1}$.
Appealing  to  \cite{ckbv,assouad}, one obtains a representation of 
$\rho_\infty$ 
as a superposition of elementary cut metrics \cite[Section 3]{ckbv}:
\begin{equation}
\label{eqn-introcutmetric}
\rho_\infty=\int_{\cut(\H)}\;d_E\,d\Si(E),\,
\end{equation}
where $\cut(X_\infty)$ is the collection of (equivalence classes of) measurable
subsets of $X_\infty$,  $d_E:X_\infty\times X_\infty\ra \{0,1\}$ is defined by 
$d_E(x_1,x_2)=|\chi_E(x_1)-\chi_E(x_2)|$, and $\Si$ is a 
measure on $\cut(X_\infty)$.   

The geodesic property (\ref{eqn-geodesic}) turns out to be 
 equivalent  to the condition that $\Si$-a.e.  $E\in \cut(X_\infty)$
is {\bf monotone}, which means that   for almost every line
$L\subset X_\infty$, the characteristic function $\chi_E$ restricted
to $L$ agrees almost everywhere (with respect to  linear measure
on $L$)
with a monotone function (Proposition \ref{prop-geodesicmonotone}).  
Thus questions about bi-Lipschitz embedding lead directly to an
investigation of monotone sets in blow-up spaces, and some instances
this leads to a complete resolution.  In this paper we implement this
approach when $X$ is the Heisenberg group, and in \cite{laaksoembed}
we use it to exhibit embeddings of Laakso-type spaces into $L^1$.

We note that Lee-Raghavendra \cite{leeraghavendra}
have used a similar combination of ideas in the context of
finite  graphs:  they use a form of metric differentiation -- the 
coarse differentiation of Eskin-Fisher-Whyte \cite{eskinfisherwhyte} --
together with
essentially the same notion of monotonicity as above.  Using this
argument, they show that a certain 
family of series-parallel graphs has supremal
$L^1$ distortion equal to $2$,  which matches the 
known upper bound on distortion for this family of 
graphs \cite{chakrabarti}.

\bigskip\bigskip
\subsection*{Lipschitz maps from the Heisenberg group}
Let $\H$ denote the Heisenberg group equipped  with the
Carnot-Caratheodory metric $d$.
It was shown in \cite{ckbv} that  metric balls
$B\subset\H$ 
do not
bi-Lipschitz embed in $L^1$.
More specifically, it was shown that if $f:B\to L^1$ is any Lipschitz map, 
then blowing $f$ up at a generic point $x\in B$,
one obtains a family of maps which degenerate
along cosets of the center of $\H$, which implies that $f$ is 
not bi-Lipschitz. 
In this paper we give a  shorter and
largely self-contained proof of the nonembedding result, as well
as a strengthening of the main result of \cite{ckbv}, using the
approach indicated above.

Let $f:\H\ra L^1$ be a Lipschitz map, and let $\rho:\H\times\H\ra [0,\infty)$
be defined by
$\rho(x_1,x_2)=\|f(x_1)-f(x_2)\|_{L^1}$, i.e. $\rho$ is the 
pullback of the distance on $L^1$ by $f$.  If $x\in \H$
and $\la\in (0,\infty)$, let $\rho_{x,\la}$ be the result
of dilating $\rho$ at $x$, and renormalizing:
$$
\rho_{x,\la}(z_1,z_2)
\,=\,\frac{1}{\la}\,\rho( x\,s_{\la}z_1,x\,s_{\la}z_2)
\,=\, \frac{1}{\la} ((s_\la)^*(\ell_x)^*\rho)(z_1,z_2)
\,,
$$
where $s_\la:\H\ra\H$
is the automorphism which scales distances by the factor $\la$,
and $\ell_x:\H\ra\H$ is left translation by $x$.

\begin{theorem}
\label{thm-maindiff}
For almost every
$x\in \H$, there is a semi-norm $\|\cdot\|_x$ on $\R^2$ such that
$$
\rho_{x,\la}(z_1,z_2)\ra \|\pi(z_1)-\pi(z_2)\|_x\quad
\mbox{as}\quad\la\ra 0,\,
$$
 uniformly on compact subsets of $\H\times\H$.  Here
$\pi:\H\ra \H/[\H,\H]\simeq \R^2$ is the abelianization homomorphism. 
In particular, $\rho_{x,\la}$ converges to a pseudo-distance which 
is zero along fibers of $\pi$, and hence $f$ is not bi-Lipschitz in any
neighborhood of $x$.
\end{theorem}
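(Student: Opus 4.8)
The plan is to run the three-part strategy outlined in the introduction, with one extra observation at the end: the cut metrics that survive on the blow-up, being translation invariant and dilation homogeneous, descend to a semi-norm on $\R^{2}$. First I would reduce to a blow-up. Fix a Lipschitz $f:\H\ra L^{1}$ and set $\rho(x_{1},x_{2})=\|f(x_{1})-f(x_{2})\|_{L^{1}}$. Since $\rho$ is the pullback by $f$ of the distance on $L^{1}$, the pseudometric space $(\H,\rho)$ maps isometrically into $L^{1}$, so exactly as in (\ref{eqn-introcutmetric}) --- now applied to $\H$ itself rather than to a blow-up --- there is a measure $\Sigma$ on $\cut(\H)$ with $\rho=\int_{\cut(\H)}d_{E}\,d\Sigma(E)$. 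For a.e.\ $x\in\H$ I would then blow up the cut measure: a Lebesgue-differentiation argument for cut measures produces, for a.e.\ $x$, a limiting cut measure $\Sigma_{x,\infty}$ on $\cut(\H)$ such that the naturally rescaled copies of $\Sigma$ converge to it, and correspondingly $\rho_{x,\la}\ra\rho_{x,\infty}:=\int d_{E}\,d\Sigma_{x,\infty}(E)$ uniformly on compact sets, with no passage to a subsequence. Two structural facts then hold at a.e.\ $x$: reparametrizing $\la$ in the defining limit gives $(s_{\mu})^{*}\rho_{x,\infty}=\mu\,\rho_{x,\infty}$ for every $\mu>0$; and, as for blow-ups of any measurable object, $\rho_{x,\infty}$ is invariant under left translations of $\H$, because moving the basepoint along $x\,s_{\la}(v)\ra x$ is too slow to disturb the limit. (Establishing the non-subsequential convergence and these invariances is technical but not, I think, the crux.)

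Next, by the metric differentiation theorem of Pauls \cite{pauls}, passing to the blow-up forces the restriction of $\rho_{x,\infty}$ to a.e.\ line $L\subset\H$ --- a coset of a horizontal one-parameter subgroup --- to be a constant-speed parametrization of a geodesic of $L^{1}$, i.e.\ (\ref{eqn-geodesic}) holds for $\rho_{x,\infty}$; by Proposition \ref{prop-geodesicmonotone} this is equivalent to the statement that $\Sigma_{x,\infty}$-a.e.\ cut $E\in\cut(\H)$ is monotone. The crux of the whole argument is then to classify monotone sets: one must show that a monotone measurable subset $E\subseteq\H$ agrees, up to a set of measure zero, with $\emptyset$, with $\H$, or with a vertical half-space $\pi^{-1}(\{z\in\R^{2}:\langle z,\nu\rangle\le c\})$ for some $\nu\in\R^{2}\setminus\{0\}$ and $c\in\R$. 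This is the step I expect to be the real obstacle. The underlying idea is that monotonicity makes $E\cap L$ a sub-ray (or all, or nothing) of $L$ for a.e.\ line $L$, so the measure-theoretic boundary of $E$ is ruled by horizontal lines in every direction; using the abundance of horizontal lines through each point of $\H$ and the way the Carnot--Carath\'eodory structure couples the two horizontal directions with the center, one should be able to show that such a boundary must be a single vertical plane, whence $E$ is a vertical half-space.

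Granting the classification, $\Sigma_{x,\infty}$ is concentrated, modulo a null set, on vertical half-spaces $E=\pi^{-1}(H)$ with $H\subseteq\R^{2}$ an affine half-plane, and for such $E$ one has $d_{E}=d_{H}\circ(\pi\times\pi)$. Hence $\rho_{x,\infty}=\sigma_{x}\circ(\pi\times\pi)$, where $\sigma_{x}$ is the cut metric on $\R^{2}$ obtained by superposing the half-plane cut metrics $d_{H}$ against the push-forward of $\Sigma_{x,\infty}$ under $E\mapsto H$. Because $\pi$ carries left translations of $\H$ to translations of $\R^{2}$ and carries $s_{\mu}$ to dilation of $\R^{2}$ by $\mu$, the two invariances of $\rho_{x,\infty}$ descend to $\sigma_{x}$, making it translation invariant and $1$-homogeneous; a symmetric, translation-invariant, $1$-homogeneous pseudo-distance on $\R^{2}$ is precisely a (possibly degenerate) semi-norm, so $\sigma_{x}(z_{1},z_{2})=\|z_{1}-z_{2}\|_{x}$ for a semi-norm $\|\cdot\|_{x}$ on $\R^{2}$. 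This gives the claimed convergence $\rho_{x,\la}\ra\|\pi(z_{1})-\pi(z_{2})\|_{x}$. Finally, $\rho_{x,\infty}$ vanishes on the fibers of $\pi$, which are cosets of the center $[\H,\H]$ and have infinite $d$-diameter: taking $z_{2}$ on the center-coset of $e$ with $d(e,z_{2})=1$ we get $\rho_{x,\la}(e,z_{2})\ra\|\pi(e)-\pi(z_{2})\|_{x}=0$, so $\rho(x,\,x\,s_{\la}z_{2})=\la\,\rho_{x,\la}(e,z_{2})=o(\la)=o\big(d(x,\,x\,s_{\la}z_{2})\big)$, and hence $f$ is not bi-Lipschitz on any neighborhood of $x$.
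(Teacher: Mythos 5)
There is a genuine gap at the decisive step. The classification of monotone subsets of $\H$ is \emph{not} what you assert: a monotone set is, modulo a null set, $\emptyset$, $\H$, or a half-space bounded by a plane that may be either \emph{vertical} or \emph{horizontal} (a horizontal plane being the union of the lines through a point $g$; each component of its complement meets a.e.\ line in a ray, hence is monotone). So monotonicity alone does not give you $\Sigma_{x,\infty}$ supported on sets of the form $\pi^{-1}(H)$, and the identity $\rho_{x,\infty}=\sigma_x\circ(\pi\times\pi)$ does not follow. Eliminating the horizontal half-space part of the cut measure is a substantial separate step: one uses that $d_{\Sigma}^{\horiz}$ is translation invariant (because parallel lines in $\H$ diverge sublinearly, so Pauls' constants $c_L$ agree on parallel lines) to reduce to showing that a Lipschitz signed cut measure supported on horizontal half-spaces with vanishing cut metric on horizontal pairs must vanish; this is proved by identifying the map $\Sigma\mapsto d_{\Sigma}^{\horiz}$ with a convolution operator on $\H$ and invoking harmonic-analysis results of Strichartz to show the relevant kernel has no nonzero ``eigenvalue zero'' components. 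Your proposal contains no substitute for this, and without it the theorem as stated (convergence to a pseudo-distance factoring through $\pi$) is not established. (If one only wants the non-bi-Lipschitz conclusion, there is a cheaper endgame using additivity of $\rho_\infty$ along center cosets and $d(p,p\exp nZ)\simeq\sqrt{n}$, but that yields only the weak version, not the full differentiation statement.)

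Two secondary points. First, your route to non-subsequential convergence --- a ``Lebesgue differentiation theorem for cut measures'' producing a canonical blow-up $\Sigma_{x,\infty}$ at a.e.\ $x$, together with automatic left-invariance of the blow-up --- is asserted but not available off the shelf and is not needed: the standard argument fixes the candidate limit $\|\pi(z_1)-\pi(z_2)\|_x$ from Pauls' theorem on horizontal pairs, and if convergence on all pairs failed one extracts by Arzel\`a--Ascoli a subsequential limit $\rho_\infty$ agreeing with the candidate on horizontal pairs but not everywhere, then derives a contradiction; uniqueness of the limit then gives full convergence. Second, to realize $\rho_\infty$ as the pullback of an $L^1$ metric (so that a cut-measure representation exists at all) you need the ultralimit/ultraproduct step together with Kakutani's theorem that ultraproducts of $L^1$ spaces are $L^1$ spaces; your proposal instead applies the cut-measure representation to $\rho$ on $\H$ itself and then blows up the measure, which is where the unproved differentiation claim enters.
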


\bigskip\bigskip
\subsection*{Discussion of the proof}
We use the term {\bf line} to refer to a coset 
$g\exp\R X$ of a horizontal $1$-parameter subgroup
$\exp \R X\subset \H$, and we refer to a pair of 
points $(z_1,z_2)\in\H\times\H$ as {\bf horizontal} if it
lies on a line, see Section \ref{sec-prelim}.

The first step in the  proof of Theorem \ref{thm-maindiff}
is to invoke the metric differentiation
theorem of Pauls \cite{pauls} (see Theorem \ref{thm-pauls}).
This  guarantees that for almost every  $x\in \H$, there
is a semi-norm $\|\cdot\|_x$ on $\R^2$ such that the statement of
Theorem \ref{thm-maindiff} holds provided we restrict to horizontal pairs
$(z_1,z_2)$, 
i.e.
\begin{equation}
\label{eqn-horizontalgood}
\rho_{x,\la}(z_1,z_2)\ra \|\pi(z_1)-\pi(z_2)\|_x\quad
\mbox{as}\quad \la\ra 0\,,
\end{equation}
with uniform convergence on compact sets of horizontal pairs.
  The remainder of the argument is 
devoted to showing that (\ref{eqn-horizontalgood}) holds for all pairs
$(z_1,z_2)\in\H\times\H$,
not just horizontal pairs.  If   this were false, then using the 
fact that $\rho_{x,\la}\leq \Lip(f)\,d$ for all $(x,\,\la)\in\H\times (0,\infty)$, we
may apply the Arzela-Ascoli theorem to find a sequence 
$\{\la_k\}\ra 0$ such that the sequence of pseudo-metrics
$\{\rho_{x,\la_k}\}$ converges uniformly on compact subsets of $\H\times \H$ 
to a pseudo-distance $\rho_\infty$, where:
\begin{enumerate}
\item
$\rho_\infty(z_1,z_2)\,=\,\|\pi(z_1)-\pi(z_2)\|_x$ for all horizontal pairs
$(z_1,z_2)$.
\item   $\rho_\infty(\bar z_1,\bar z_2)\neq
\|\pi(\bar z_1)-\pi(\bar z_2)\|_x$ for some $(\bar z_1,\bar z_2)\in\H\times\H$.
\item $\rho_{\infty}\,\leq\,\Lip(f)\,d\,$.
\end{enumerate}

Next, we apply ultralimits (or ultraproducts in the Banach space literature)
and a theorem of Kakutani \cite{kakutani},
 to see that
$\rho_\infty$ is also induced by a Lipschitz map $f_\infty:\H\ra L^1$.
Therefore
$\rho_\infty$ has a cut metric representation 
as a superposition of elementary cut metrics (\ref{eqn-introcutmetric}).

Condition (1) implies that the restriction 
of $f_\infty$ to any line
 gives a constant speed parametrization of some geodesic in $L^1$, and as
mentioned above,
 this 
property of $f_\infty$
 is equivalent  to the condition that $\Si$-a.e.  $E\in \cut(\H)$
is {\bf monotone}:   for almost every line
$L\subset\H$, the characteristic function $\chi_E$ restricted
to $L$ agrees almost everywhere (with respect to  linear measure
on $L$)
with a monotone function (Proposition \ref{prop-geodesicmonotone}).  

Most of the work in  the proof goes into Theorem
\ref{thm-monotonehalfspace}, which classifies monotone subsets
of $\H$.  The monotone subsets of $\H$ turn out to be
the half-spaces, modulo sets
of measure zero.  A {\bf half-space} in $\H$ is a connected component
of  $\H\setminus P$ where $P$ is either a vertical plane (a coset
of a subgroup isomorphic to $\R^2$), or a horizontal
plane (the union of the lines passing through some point $g\in \H$).

Thus the cut measure $\Si$ is supported on half-spaces.
We then show that  $\Si$ is in fact supported 
on vertical half-spaces (Section \ref{sec-injectivity}). This involves 
proving the injectivity
of a certain convolution operator on $\H$, and invokes some
harmonic analysis  results from  \cite{strichartz}.
Finally, for cut measures supported on vertical half-spaces, the cut
metric $\rho_\infty(z_1,z_2)$
depends only on the projections $\pi(z_1),\,\pi(z_2)$, 
which contradicts (2).

We would like to emphasize 
 that there is a simpler way to conclude the argument which avoids
the harmonic analysis in Section \ref{sec-injectivity}, if
one is only interested in the bi-Lipschitz nonembedding result.  We present this alternate
endgame in Section
\ref{sec-proofweak}.

\bigskip
\subsection*{Comparison  with \cite{ckbv}}
Both the proof given here and the original proof in \cite{ckbv} use the 
cut metric representation, 
together with a differentiation argument.
Here we use the  $L^1$ cut metric representation
\cite[Section 3]{ckbv}, rather than
the finer representation using sets of finite perimeter in \cite[Section 4]{ckbv}.
Also, we use the  differentiation result of \cite{pauls}, rather than the
differentiation results  \cite{luigi1,luigi2,italians1,italians2}, which were
a key ingredient in \cite{ckbv}.  This leads to a much stronger
restriction on the cuts showing up in the cut representation of
the blown-up map as compared with the original map -- they
are monotone, rather than arbitrary sets of locally finite perimeter.
We point out that the classification proof for monotone sets has some similarities
with the classification proof for  sets with constant normal, an important
component of  \cite{italians1,italians2}.   The harmonic analysis
material appearing in Section \ref{sec-injectivity}
does not seem to correspond to anything in \cite{ckbv}.

Apart from providing a substantially different approach from
\cite{ckbv} to (generalized) differentiability
theory for Lipschitz maps into $L^1$, 
the argument here gives a stronger conclusion, and is significantly shorter
than \cite{ckbv}.
The bi-Lipschitz nonembedding proof via the  Theorem \ref{thm-weakdiff}
is much shorter than \cite{ckbv}, and is self-contained, apart from foundational
material on $L^1$-cut measures taken from \cite[Section 3]{ckbv}.

\bigskip
\subsection*{Further results}
In a forthcoming paper with Assaf Naor \cite{ckn}, we prove a quantitative version of
the nonembedding theorem,
i.e.  for every $\eps>0$ we find
 an explicit  $\delta=\delta(\epsilon)$, such that for
every $1$-Lipschitz map $f:B\to L^1$,
there exist $x_1,x_2$ with $d(x_1,x_2) > \delta$ and
$\|f(x_1)-f(x_2)\|_{L_1}<\epsilon \cdot d(x_1,x_2)$.
Central to the argument is the formulation and proof of a quantitative
version of the classification of monotone sets given in this paper.
It is also necessary to estimate,  in terms of $\epsilon$, a scale on which
this quantitative classification can be applied;  sets of finite
perimeter a play a
direct role in this step of the argument.

The rough outline of the first part of the 
proof given here is applicable in much greater
generality, in particular to a large family of spaces satisfying Poincare
inequalities.   
 In this broader context there is a version of metric differentiation
\cite{ckmetdiff},
as well as an associated notion of monotone sets, which can be used to
study bi-Lipschitz embedding in $L^1$; the final conclusions about
embeddability or nonembedding depend on the
the structure of monotone sets, which varies from example to example.
For instance, in contrast to the Heisenberg group,  the  Laakso spaces
bi-Lipschitz embed in $L^1$, even though they do not bi-Lipschitz embed
in Banach spaces satisfying the Radon-Nikodym property, such as the space
of sequences $\ell^1$, see \cite{laaksoembed}.
We will pursue these ideas elsewhere.

\subsection*{Organization of the paper}
Section \ref{sec-prelim} collects some background material.  In Section 
\ref{sec-monotonegeodesic} we relate the geodesic property of maps 
$X\ra L^1$ with the monotonicity of the associated cut measure.
In Section \ref{sec-preciselymonotone}, we  classify precisely
monotone subsets of $\H$; this argument
has fewer technical complications, but the same outline as the proof
Theorem \ref{thm-monotonehalfspace}.  In Section \ref{sec-monotone}
we prove Theorem \ref{thm-monotonehalfspace}.  In Section 
\ref{sec-proofweak} we prove  Theorem \ref{thm-weakdiff}.
In Section \ref{sec-injectivity} we analyze the linear operator $\Si\mapsto d_{\Si}$
which assigns a cut metric to a signed cut measure.
In Section \ref{sec-standardonlines} we complete the proof of Theorem
\ref{thm-maindiff}.

\subsection*{Acknowledgements}
 We would like to thank John Mackay for a number of comments on the first
version of this paper, in particular for noticing that Definition 5.3 
was incomplete.

\section{Preliminaries}
\label{sec-prelim}

In this section we recall various facts that will be needed later,
and fix notation.

We will use $\l$ as a generic symbol 
to denote Haar measure on Lie groups and associated homogeneous spaces.

\subsection{Carnot groups}

We recall that a {\bf Carnot group} is a triple 
$(G,\De,\langle\,\cdot,\cdot\,\rangle)$,
where $G$ is a simply-connected nilpotent Lie group,
$\De$ is a subspace of the Lie algebra of $G$,
$\langle\,\cdot,\cdot\,\rangle$ is an inner product on $\De$,
and there is a decomposition of the Lie algebra of $G$
as a direct sum
$$
L(G)\;=\;V_1\oplus\ldots\oplus V_k,
$$
where $V_1=\De$, and $[V_1,V_i]=V_{i+1}$ 
for all 
$i\in \{1,\ldots,k-1\}$.   
For every $\la\in (0,\infty)$,
there is a unique automorphism $s_{\la}:G\ra G$ whose derivative
scales $V_i$ by the factor $\la^i$.
The direct sum $V_2\oplus\ldots\oplus V_k$ 
is an ideal in the
Lie algebra $L(G)$, which  is the  tangent space of 
the derived  subgroup $[G,G]\subset G$.  We denote the
canonical epimorphism to the abelianization of $G$
by $\pi:G\ra G/[G,G]$; the latter is just a copy of $\R^n$
for $n=\dim \De$.  

We will also view $\De$
as a left invariant distribution on $G$ (or left invariant 
sub-bundle of $TG$),
and refer to it as  the {\bf horizontal space}.  
A $C^1$ path $c:I\ra G$ is {\bf horizontal} if
its velocity is tangent to $\De$ everywhere.  A 
horizontal path
$c$  is a {\bf horizontal
lift} of a path $\bar c:I\ra G/[G,G]$ if $\bar c=\pi\circ c$.
Given a $C^1$ path $\bar c:I\ra G/[G,G]$, 
$t\in I$, and $x\in \pi^{-1}(\bar c(t))$, there is a unique
horizontal lift $c$ of $\bar c$ such that $c(t)=x$.
A {\bf  line} is the image of a horizontal lift of a straight
line in $G/[G,G]\simeq \R^n$, or equivalently, a 
line is a subset $L\subset G$ of the form
$$
L=\{g\,\exp(tX)\mid t\in \R\}
$$
for some $g\in G$, $X\in \De\setminus\{0\}$, or to put is another
way,  a line is
a left translate of a (nontrivial) horizontal $1$-parameter 
subgroup.  We let $\L(G)$ denote the collection of 
all lines in $G$; this has a natural smooth 
structure.  
A {\bf horizontal pair} is pair of points $x_1,\,x_2\in X$
which lie on a line.  We let $\horiz(G)\subset G\times G$ denote the collection
of horizontal pairs; this is a closed subset of $G\times G$.

We equip $G$ with
the  Carnot-Carath\'eodory (or sub-Riemannian) distance function
$d_G$ associated with the pair 
$(\De,\langle\,\cdot,\cdot\,\rangle)$, namely $d_G(p,q)$ is the infimal
length of a horizontal path joining $p$ to $q$.

\bigskip\bigskip
\subsection{The Heisenberg group}
\label{sec-heisenberg}
Recall that the $3$-dimensional Heisenberg group
$\H$ is the matrix group
$$
\left\{\left.\begin{bmatrix}
  1 & a&c\\
 0&1&b\\
 0&0&1
\end{bmatrix}\;\right| \;a,b,c\in\R\right\}\,,
$$
whose Lie algebra of $\H$ has the presentation
$$
[X,Y]=Z,\quad [X,Z]=[Y,Z]=0\,,
$$
where

$$
X=\begin{bmatrix}
  0 & 1&0\\
 0&0&0\\
 0&0&0
\end{bmatrix}\,,
\quad 
Y=\begin{bmatrix}
  0 & 0&0\\
 0&0&1\\
 0&0&0
\end{bmatrix}\,
\quad
Z=\begin{bmatrix}
  0 & 0&1\\
 0&0&0\\
 0&0&0
\end{bmatrix}\,.
$$

\noindent
We will identify $X,Y$, and $Z$ with left invariant
vector fields on $\H$.
The Carnot group structure on $\H$ is the triple
$(\H,\De,\langle\,\cdot,\cdot\,\rangle_{\De})$, where $\De$ is the
$2$-dimensional subbundle of the tangent bundle $T\H$
spanned by $\{X,Y\}$, and 
$\langle\,\cdot,\cdot\,\rangle_{\De}$ is the left invariant
Riemannian metric on $\De$ for which $\{X,Y\}$ are orthonormal.
The center of $\H$ is the $1$-parameter group 
$\{\exp tZ\mid t\in \R\}$, which is also the derived subgroup
$[\H,\H]$.  The canonical
epimorphism to the abelianization $\pi:\H\ra \H/[\H,\H]=\H/\cent(\H)$ 
will be identified with the homomorphism
$\pi:\H\ra\R^2$ where

$$
\pi\left(\begin{bmatrix}
  1 & a&c\\
 0&1&b\\
 0&0&1
\end{bmatrix}\right)=(a,b)\,.
$$

\bigskip
\begin{lemma}
\label{lem-enclosedarea}
If $c:I\ra \H$ is a horizontal lift of a loop $\bar c:I\ra \R^2$,
then the endpoints $c(0),c(1)$ both lie in the same
fiber $F=\pi^{-1}(\bar c(0))=\pi^{-1}(\bar c(1))$, and  satisfy
\begin{equation}
\label{eqn-heightequalsenclosedarea}
c(1)=c(0)\exp(A\,Z)\,,
\end{equation}
where $A\in \R$ is the signed Euclidean area enclosed by 
the loop $\bar c$.  
\end{lemma}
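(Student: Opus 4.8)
The plan is to pass to the global coordinate chart on $\H$ furnished by the matrix entries $(a,b,c)$, in which the group law reads $(a,b,c)\cdot(a',b',c')=(a+a',\,b+b',\,c+c'+ab')$, reduce the horizontality condition to a single scalar ODE for the vertical coordinate, and then recognize its solution as a line integral to which Green's theorem applies.

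First I would record the left-invariant frame in these coordinates, obtained by differentiating left translation: $X=\partial_a$, $Y=\partial_b+a\,\partial_c$, $Z=\partial_c$. Hence $\De=\span\{X,Y\}$ is the kernel of the contact $1$-form $\omega:=dc-a\,db$ (indeed $\omega(X)=\omega(Y)=0$ while $\omega(Z)=1$). Writing the given path in coordinates as $c(t)=(a(t),b(t),\zeta(t))$, horizontality of $c$ is equivalent to $\omega(\dot c)\equiv 0$, i.e.\ to $\dot\zeta=a\,\dot b$. Since $\bar c=\pi\circ c=(a,b)$ is a loop we have $(a(0),b(0))=(a(1),b(1))$, so $c(0)$ and $c(1)$ lie in the common fiber $F=\pi^{-1}(\bar c(0))$; this is the first assertion.

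For the second assertion I would integrate the horizontality relation: $\zeta(1)-\zeta(0)=\int_0^1 a\,\dot b\,dt=\oint_{\bar c}a\,db$. Since $\bar c$ is closed, $\oint_{\bar c}(a\,db+b\,da)=\oint_{\bar c}d(ab)=0$, so $\oint_{\bar c}a\,db=\tfrac12\oint_{\bar c}(a\,db-b\,da)$, which is exactly the signed Euclidean area $A$ enclosed by $\bar c$ (Green's theorem, read with winding numbers when $\bar c$ is not embedded). Finally, the group law gives $c(0)^{-1}c(1)=(0,0,\zeta(1)-\zeta(0))=(0,0,A)$ while $\exp(AZ)=(0,0,A)$, so $c(1)=c(0)\exp(AZ)$, which is (\ref{eqn-heightequalsenclosedarea}).

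I do not expect a genuine obstacle here; the only thing requiring care is the matching of conventions — the precise form of the contact form $\omega$ in the chosen coordinates, and, more to the point, reading ``signed Euclidean area enclosed by $\bar c$'' as $\tfrac12\oint_{\bar c}(a\,db-b\,da)$, equivalently $\oint_{\bar c}a\,db$, i.e.\ the winding-number-weighted area, since $\bar c$ need not be simple. With that understood the computation is routine, and Green's theorem applies to the $C^1$ loop $\bar c$ without difficulty.
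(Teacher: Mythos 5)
Your proof is correct and is essentially the paper's ``elementary proof'' executed in explicit coordinates: you integrate the contact form $\al_Z=dc-a\,db$ (whose kernel is $\De$) along the lift and convert $\oint_{\bar c}a\,db$ into the signed enclosed area, whereas the paper phrases the same computation invariantly via Stokes' theorem on a $2$-chain bounding the concatenation of $c$ with a vertical closing path. All conventions (group law, left-invariant frame, sign of the area) check out.
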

\begin{proof} 
For readers familiar with connections on principal bundles, the
projection $\pi:\H\ra\R^2$  defines a principal $\R$-bundle, and horizontal
distribution is a connection with curvature form $dx\wedge dy$.  The lemma
follows from the relation between holonomy and curvature, for abelian
principal bundles.

Here is an elementary proof.  Let  $\{\al_X,\,\al_Y,\al_Z\}$ be
the basis of of left-invariant $1$-forms 
dual to $\{X,Y,Z\}$, so 
 $\al_X=\pi^*dx$, $\al_Y=\pi^*dy$, and 
$d\al_Z=-\al_X\wedge\al_Y=-\pi^*(dx\wedge dy)$.
Let $\eta:I\ra \H$
be a path in the fiber $F$ running from $c(1)$
to $c(0)$.     If $\ga$ is the concatenation of 
$c$ and $\eta$, then $\ga$ is a cycle which bounds
a $2$-chain $\zeta$, so by Stokes' theorem we have
$$
\int_{\eta}\;\al_Z\;=\;\int_{\ga}\;\al_Z 
$$
$$
=\;\int_{\zeta}\;d\al_Z=-\int_{\pi\circ\zeta}\;dx\wedge dy=-A\,,
$$
which means that $c(0)=c(1)\exp(-AZ)$, or $c(1)=c(0)\exp\,AZ$.

\end{proof}

\bigskip
If $x,y\in \H$, then by Lemma \ref{lem-enclosedarea}
any two horizontal paths $c_1,\,c_2$ from $x$ to $y$
have projections  $\pi\circ c_1$, $\pi\circ c_2$ which
enclose zero signed area; conversely, any path $c:I\ra \H$
which starts at $x$, and ends in $\pi^{-1}(y)$ will terminate
at $y$ provided the signed area enclosed by $c_1$ and $c$ is zero.

The geodesics (locally length minimizing paths)
in $\H$ are horizontal lifts of circles in $\R^2$.  This implies
that for every $p\in \H$, 
\begin{equation}
\label{eqn-verticaldistance}
d(p,p\,\exp tZ)=\sqrt{4\pi|t|}\,,
\end{equation}
by the solution to the isoperimetric problem.

A {\bf vertical plane} is a subset $P\subset\H$ of the form 
$\pi^{-1}(L)$, where $L$ is a line in the plane.
The {\bf horizontal plane centered at   $p\in\H$} is the 
union of the lines passing through $p$.  A {\bf plane} is
a vertical or horitzontal plane. 
A {\bf vertical (respectively horizontal) half-space} is one
of the two components of $\H\setminus P$, for some vertical (respectively
horizontal) plane.

Two lines $L_1,\,L_2\in\L(\H)$ are {\bf parallel} if they are tangent to the
same horizontal vector field $X\in \De\setminus\{0\}$, or equivalently, if
$\pi(L_1),\,\pi(L_2)$ are parallel lines in $\R^2$.  Two lines are {\bf skew}
if they are disjoint and not parallel.

\begin{lemma}
\label{lem-geometry}

\mbox{}
\begin{enumerate}
\renewcommand{\theenumi}{\Alph{enumi}}
\renewcommand{\labelenumi}{(\theenumi)}
\setlength{\itemsep}{.75ex}%
\setlength{\parskip}{.75ex}%

\item 
\label{item-parallel}
Suppose $L_1,L_2\in\L(\H)$ are parallel but $\pi(L_1)\neq \pi(L_2)$.  
Then there
is a unique fiber $\pi^{-1}(x)\subset H$ lying halfway between 
$\pi(L_1)$ and $\pi(L_2)$
such that every point in $L_1$ can
be joined to $L_2$ by a unique line, and this line will pass 
through $\pi^{-1}(x)$.  Moreover every point in $\pi^{-1}(x)$
lies on a unique such line.

\item
\label{item-skew}
   Suppose $L_1,L_2\in \L(\H)$ are skew lines.
Then there is a hyperbola $Y\subset \R^2$ with asymptotes $\pi(L_1)$ 
and $\pi(L_2)$,
such that every tangent line of $Y$ has a unique horizontal lift 
which intersects
both $L_1$ and $L_2$, and conversely, if $L\in \L(\H)$ and 
$L\cap L_i\neq\emptyset$
for $i=1,2$, then $\pi(L)$ is tangent to $Y$.
\end{enumerate}
\end{lemma}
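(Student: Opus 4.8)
The plan is to work entirely downstairs in $\R^2$ and use Lemma~\ref{lem-enclosedarea} to lift the answer back to $\H$. Both parts amount to the following question: given two lines $L_1, L_2 \in \L(\H)$, which lines $L$ meet both? A line $L$ with direction $\pi(L)$ meets $L_i$ iff there is a point $p\in L_i$ and a horizontal segment from $p$ to a point of $\pi^{-1}(\pi(L))$ lying on $L$; by Lemma~\ref{lem-enclosedarea} the constraint is purely about signed enclosed area, since the vertical coordinate of the endpoint of a horizontal lift is determined by the area swept out. So I would first parametrize everything concretely: write $L_i$ as the horizontal lift of an affine line $\pi(L_i)\subset\R^2$ through a chosen basepoint, record the vertical coordinate of each point of $L_i$ as the basepoint's vertical coordinate plus the signed area of the triangle-like region swept by the projected path, and then ask for which affine lines $\ell\subset\R^2$ there is a choice of a point on $\ell$ and points on $\pi(L_1),\pi(L_2)$ making the two area constraints (one per $L_i$) simultaneously solvable.

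For part~(A), $\pi(L_1)$ and $\pi(L_2)$ are distinct parallel lines. A line $L$ joining a point $q_1\in L_1$ to a point $q_2\in L_2$ has $\pi(L)$ a line crossing both $\pi(L_1)$ and $\pi(L_2)$; the segment of $\pi(L)$ between the two crossing points, together with return paths along $\pi(L_i)$, encloses a parallelogram-type region whose signed area must equal the difference of vertical coordinates. I expect the computation to show that the area condition forces the crossing point on $\pi(L)$ to lie on the midline exactly halfway between $\pi(L_1)$ and $\pi(L_2)$, and that for each such crossing point the direction and the two endpoints are then uniquely determined. Concretely: fix the midline $m=\pi^{-1}(x)$; a point of $m$ determines the unique joining line through it, and conversely a joining line determines its intersection with $m$. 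I would package this as: pick affine coordinates so $\pi(L_1)=\{y=a\}$, $\pi(L_2)=\{y=-a\}$, compute the swept area for a segment of slope $s$ crossing at $y=0$, and verify it matches the prescribed vertical offset for exactly one choice of the remaining parameter.

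For part~(B), $\pi(L_1)$ and $\pi(L_2)$ are distinct intersecting (non-parallel) lines in $\R^2$; choose coordinates so they are the two coordinate axes. A candidate line $\pi(L)$ is $\{x/\alpha + y/\beta = 1\}$ for intercepts $\alpha$ (on the $x$-axis) and $\beta$ (on the $y$-axis); the triangle it cuts off with the axes has signed area $\tfrac12\alpha\beta$, and the two vertical-coordinate constraints from $L_1$ and $L_2$ will pin this down to a single equation of the form $\alpha\beta = \mathrm{const}$, i.e. the locus of such lines is exactly the family of tangent lines to a fixed hyperbola $Y$ having the two axes (hence $\pi(L_1),\pi(L_2)$) as asymptotes — this is the classical ``tangent lines to $xy=c$ cut triangles of fixed area'' fact. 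The remaining point is uniqueness of the horizontal lift $L$ once $\pi(L)$ is fixed: by Lemma~\ref{lem-enclosedarea} the vertical coordinate along $L$ is determined once we know it at one point, and the intersection conditions with $L_1,L_2$ each fix that value, so I must check they are consistent (they will be, by the area identity) and that this leaves exactly one lift. The main obstacle is bookkeeping the signs and orientations in the area computation — choosing conventions for the signed area, the orientation of the projected paths, and the identification of "the" halfway fiber in (A) — so that the two constraints genuinely reduce to a single condition rather than an inconsistent pair; once the sign conventions are fixed this is a short explicit calculation, and the geometric conclusions (midline fiber in (A), hyperbola with the given asymptotes in (B)) follow immediately.
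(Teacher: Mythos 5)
Your proposal is correct and rests on the same mechanism as the paper's proof: both parts reduce, via Lemma \ref{lem-enclosedarea}, to a signed-area condition on projected chords, and your part (B) --- normalizing $\pi(L_1),\pi(L_2)$ to the coordinate axes and identifying the resulting condition $\alpha\beta=\mathrm{const}$ with the tangent lines of a hyperbola asymptotic to the axes --- is exactly the paper's argument. For part (A) the paper avoids coordinates by a synthetic trick (two chords of the parallel pair crossing at the point halfway between them bound a self-intersecting quadrilateral of zero signed area, so their horizontal lifts simultaneously join $L_1$ to $L_2$), whereas you compute in coordinates; the computation does yield the same fixed crossing point $x$ and the bijection between joining lines and points of the fiber $\pi^{-1}(x)$, so the difference is only one of presentation.
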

\proof
(\ref{item-parallel}).  Let $\bar\eta:[0,1]\ra\R^2$ be a line segment running
from $\pi(L_1)$ to $\pi(L_2)$, and let $\eta:[0,1]\ra\H$ be a horizontal lift of $\bar\eta$
ending in $L_2$.  Then $x_1=\eta(0)\,\exp\,AZ$ for some $A\in \R$,
where $x_1\in L_1$.  Now form a closed quadrilateral $\bar\eta,\,\bar\al,\,\bar\beta,\,
\bar\ga$ enclosing signed area $A$, where $\bar\al\subset \pi(L_2)$ and
$\bar \ga\subset \pi(L_1)$.  Horizontally lifting this to an open 
quadrilateral $\eta,\,\al,\,\beta,\,\ga$, by Lemma \ref{lem-enclosedarea}
we have $\ga\subset L_1$, which implies that $\beta$ intersects both $L_1$
and $L_2$.

Let $\bar \beta_1:[0,1]\ra\R^2$ be a line segment from $\pi(L_1)$ to $\pi(L_2)$
which passes through the midpoint $x$ of $\bar\beta$.   Then we obtain a 
closed (self-intersecting) quadrilateral $\bar\beta,\,\bar\de_1,\,\bar\beta_1,\,\bar\de_2$
enclosing zero signed area, where $\bar\de_i\subset \pi(L_i)$.
Horizontally lifting this to $\H$, by Lemma \ref{lem-enclosedarea} we get
a closed quadrilateral $\beta,\,\de_1,\,\beta_1,\,\de_2$, and hence $\beta_1$
intersects both $L_1$ and $L_2$. 

Given $A\in \R$, we may choose $\bar\beta_1$ so that the area of the triangle
enclosed by $\bar\beta_1,\,\bar\beta$, and $\pi(L_1)$ is $A$; it follows that
$\beta_1$ may be chosen to pass through any prescribed point in the fiber
$\pi^{-1}(x)$.

(\ref{item-skew}).  Let $ x=\pi(L_1)\cap \pi(L_2)$, and $x_i=L_i\cap \pi^{-1}(x)$. 
Then $x_2=x_1\exp\,AZ$ for some $A\in\R\setminus\{0\}$.  Let $\Ga$ be the
collection of line segments $\bar\eta:I\ra\R^2$ running from $\pi(L_1)$ to
$\pi(L_2)$, such that the oriented triangle with vertices $x,\,\bar\eta(0),\,
\bar\eta(1)$ encloses signed area $A$.  By Lemma \ref{lem-enclosedarea},
if $\bar\eta\in\Ga$, then the horizontal lift $\eta:I\ra\H$ starting
in $L_1$ ends on $L_2$.   The elements of $\Ga$ are precisely the segments
tangent to a hyperbola with asymptotes $\pi(L_1)$ and $\pi(L_2)$.  To see this,
apply an area-preserving affine transformation so that $\pi(L_1)$
and $\pi(L_2)$ are the $x$ and $y$ axes, respectively; then by analytic geometry,
the tangent lines to the hyperbola defined by $xy=C$ enclose area
$2C$.

\qed

\bigskip\bigskip
\subsection{Metric differentiation and blow-ups of Lipschitz maps} 
\label{subsec-metricdifferentiation}

Let $f:G\ra Y$ be a Lipschitz map from a Carnot group to a metric space,
and let $\rho:G\times G\ra [0,\infty)$ be the pullback of the distance
function, i.e. $\rho(g_1,g_2)=f^*d_Y(g_1,g_2)=d_Y(f(g_1),f(g_2))$.
We will need the following metric differentiation theorem of Pauls \cite{pauls},
which generalizes Kirchheim's metric differentiation theorem \cite{kirchheim}:

\begin{theorem}
\label{thm-pauls}
For almost every  
$g\in G$, rescalings of $\rho$
at $g$ converge on uniformly on compact subsets of $\horiz(G)\subset G\times G$
to the left invariant Carnot (pseudo)distance $\al:G\times G\ra[0,\infty)$
induced 
by some Finsler semi-norm on the horizontal space.   In other words, if
$K\subset \horiz(G)$ is compact, then 
\begin{equation}
\label{eqn-metricrescalings}
\frac{1}{\la}\,s_\la^*(\ell_g^*\rho)\restr_K\stackrel{C^0}{\lra}\;\al\restr_K\,\quad
\mbox{as}\quad \la\ra 0\,.
\end{equation}

\end{theorem}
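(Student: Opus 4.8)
The plan is to adapt Kirchheim's proof of metric differentiability to the Carnot setting, replacing straight Euclidean segments by horizontal lines. I first record that all rescaled pseudo-distances $\frac1\la s_\la^*(\ell_g^*\rho)$ are $\Lip(f)$-Lipschitz with respect to $d_G$ (immediate from $\rho\le\Lip(f)\,d_G$ and $d_G(s_\la z_1,s_\la z_2)=\la\,d_G(z_1,z_2)$), which will guarantee continuity of the limiting semi-norm in its direction variable. Second, for a horizontal pair $(x_1,x_2)$ on a line tangent to a horizontal vector field $X$, with $x_2=x_1\exp tX$, the automorphism identity $s_\la(x_1\exp tX)=s_\la(x_1)\exp(\la tX)$ gives
$$
\tfrac1\la\bigl(s_\la^*\ell_g^*\rho\bigr)(x_1,x_2)\;=\;\tfrac1\la\,d_Y\!\bigl(f(p_\la),f(p_\la\exp(\la tX))\bigr),\qquad p_\la:=g\,s_\la(x_1)\to g,
$$
so everything reduces to understanding the metric derivative of $f$ along lines, with a basepoint that is allowed to move.

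For a fixed $X$, the lines tangent to $X$ are the integral curves of $X$ and foliate $G$ with a smooth transverse measure. Applying the classical metric differentiation theorem for Lipschitz curves to $f$ restricted to each such line, together with Fubini and the Lebesgue density theorem, I obtain a full-measure set $G_X\subset G$ on which the metric derivative $n_g(X):=\lim_{t\to0}\tfrac1{\abs t}\,d_Y(f(g),f(g\exp tX))$ exists and, crucially, the convergence is uniform under a moving basepoint: $\tfrac1{\abs t}\,d_Y(f(p),f(p\exp tX))\to n_g(X)$ as $p\to g$ through the relevant points. This uniform-in-basepoint form is the substantive content of the theorem — it is where Kirchheim's covering and averaging argument is used — and together with the display above it turns pointwise convergence of the rescalings into uniform convergence. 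Intersecting $G_X$ over a countable dense set of directions $X$ gives a full-measure set $G_0$ on which $n_g$ is defined on a dense set of directions, is positively homogeneous, and, by symmetry of the two-sided metric derivative, satisfies $n_g(-X)=n_g(X)$; the $\Lip(f)$-estimate makes $n_g$ continuous in $X$, so it extends to all of the horizontal space $\De$.

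The one genuinely non-formal step is subadditivity, $n_g(v+w)\le n_g(v)+n_g(w)$. I would prove it by approximating the segment $r\mapsto g\exp(r(v+w))$, $0\le r\le t$, by a zigzag of $N$ alternating horizontal steps of length $\tfrac tN$ in the directions $v$ and $w$. By Lemma \ref{lem-enclosedarea} (and its analogue in a general Carnot group) the endpoint of this zigzag differs from $g\exp(t(v+w))$ by an element of the derived subgroup of size $O(t^2/N)$, hence — by the vertical distance estimate (\ref{eqn-verticaldistance}) — lies within $d_G$-distance $O(t/\sqrt N)$ of $g\exp(t(v+w))$. This Baker--Campbell--Hausdorff discrepancy, which is of the same order in $t$ as the main term and only disappears when $N\to\infty$, is the essential difference from the Euclidean argument. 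The $d_Y$-length of the zigzag is $\approx t\,n_g(v)+t\,n_g(w)$ once one arranges, by averaging the zigzag over a small family of translates and using Fubini to make all $2N$ of its vertices good basepoints, that each step realizes the appropriate metric derivative up to a small error. Dividing by $t$, letting $t\to0$ and then $N\to\infty$ gives the inequality, so $n_g$ is a Finsler semi-norm on $\De$.

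It remains to assemble the conclusion. Let $\al$ be the left-invariant Carnot pseudo-distance induced by $n_g$. Horizontal lines are $\al$-geodesics: by Lemma \ref{lem-enclosedarea} the horizontal lifts joining $\pi(x_1)$ to $\pi(x_2)$ and terminating at $x_2$ are precisely those whose projections enclose zero signed area with the straight segment, and the straight segment is the shortest path between its endpoints, hence the shortest such projection; since horizontal length equals the length of the projection, the straight lift minimizes. Therefore $\al(x_1,x_2)=\abs t\,n_g(X)$ for the horizontal pair above, and the uniform-in-basepoint metric differentiation of the previous step, applied to the display in the first paragraph, gives $\tfrac1\la s_\la^*(\ell_g^*\rho)\to\al$ uniformly on compact subsets of $\horiz(G)$, which is (\ref{eqn-metricrescalings}). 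The main obstacle is thus the combination of (i) the Fubini-and-density bookkeeping that turns ``the metric derivative exists at a.e.\ point of a.e.\ line'' into one a.e.\ statement with uniform control over all directions and all nearby basepoints, and (ii) the area-corrected zigzag needed for subadditivity; the remaining ingredients — the sub-Riemannian geometry of $G$ and the $\Lip(f)$-estimate — are soft.
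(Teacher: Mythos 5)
The paper does not prove this statement at all: Theorem \ref{thm-pauls} is imported verbatim from Pauls \cite{pauls} as a black box (with a pointer to \cite{ckmetdiff} for the PI-space generalization), so there is no in-paper argument to compare yours against. Your proposal is a faithful reconstruction of the standard proof, i.e.\ Kirchheim's scheme \cite{kirchheim} transported to the Carnot setting, and its architecture is sound: the reduction of the rescaled pseudo-distance at a horizontal pair to a difference quotient with a moving basepoint via $s_\la(x_1\exp tX)=s_\la(x_1)\exp(\la tX)$ is exactly right; the identification of the two genuinely non-formal inputs --- (i) upgrading ``a.e.\ existence of the metric derivative along a.e.\ line'' to basepoint-uniform convergence via Fubini, density points and Kirchheim's covering/Egorov argument, and (ii) the zigzag proof of subadditivity with the Baker--Campbell--Hausdorff defect of size $O(t^2/N)$ in the derived subgroup, hence $O(t/\sqrt N)$ in $d_G$ by (\ref{eqn-verticaldistance}) --- is correct, and the order of limits ($t\ra 0$ first, then $N\ra\infty$) is the right way to kill a defect that is linear in $t$. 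The identification $\al(x_1,x_2)=|t|\,n_g(X)$ via Lemma \ref{lem-enclosedarea} and minimality of the straight projection is also correct.

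Two points deserve more care than you give them. First, uniform convergence on a compact $K\subset\horiz(G)$ requires uniformity over the \emph{direction} $X$ as well as over the basepoint and the parameter $t$; your $G_X$ is built one direction at a time and intersected over a countable dense set, so you need the equicontinuity of $(p,t,X)\mapsto\frac1{|t|}d_Y(f(p),f(p\exp tX))$ in $X$, uniformly in $(p,t)$. This does follow from the $\Lip(f)$ bound together with $d_G(p\exp tX,p\exp tX')\lesssim |t|\,|X-X'|+|t|\,|X-X'|^{1/2}$ (again BCH plus (\ref{eqn-verticaldistance})), but it is an estimate on the prelimit quantities, not merely continuity of the limit $n_g$, and it should be stated; note also that near the diagonal of $\horiz(G)$ the direction is undetermined and one must fall back on the Lipschitz bound, which makes both sides $O(d(x_1,x_2))$ there. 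Second, the basepoint-uniform differentiation in step (i) is asserted rather than proved; since it is, as you say, the substantive content of the theorem, a complete write-up would have to carry out the measurable-selection/density argument rather than cite its Euclidean ancestor. As a proof \emph{sketch}, however, every load-bearing idea is present and correctly placed.
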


In actuality, suitably 
formulated,  metric  differentiation
holds whenever the domain is 
any PI space; see \cite{ckmetdiff}.

One may refine the conclusion somewhat by making use of ultralimits, which 
have been used frequently in geometric group theory 
see \cite{asyinv,kleinerleeb}, or earlier  in the  Banach space literature \cite{dacuna,heinrich,heinrichmankiewicz}.
If $g$ is as in the theorem above, and $\{\la_k\}\subset(0,\infty)$ is a 
sequence tending to zero, then $f$ defines a sequence of uniformly
Lipschitz maps $(\frac{1}{\la_k}G,g)\stackrel{f}{\lra}(\frac{1}{\la_k}Y,f(g))$
between pointed metric spaces.  The ultralimit of this
sequence is a Lipschitz mapping $f_\om:G_\om\ra Y_\om$,
where $G_\om$ and $Y_\om$ are ultralimits of $G$ and $Y$, 
respectively.  Up to isometry, $G_\om$ may be identified
with $G$ itself,  while a theorem of 
Kakutani \cite{kakutani} implies that when $Y=L^1$, then
$Y_\om=L^1_\om$  is isometric to an
$L^1$ space for some (typically not $\si$-finite) measure.
This gives:
\begin{corollary}
\label{cor-paulspluskakutani}
If the rescaled (pseudo)distance functions
 in (\ref{eqn-metricrescalings}) converge 
uniformly on compact subsets of $G\times G$ to a limiting pseudo-distance
$\rho_{\infty}$, then $\rho_{\infty}$ is the metric induced by
a map $f_\om:G\ra L^1$.
\end{corollary}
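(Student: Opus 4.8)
The plan is to realize $\rho_\infty$ as the pullback distance of a map into a Banach-space ultraproduct of the rescaled maps, and then invoke Kakutani's theorem to identify that ultraproduct with a genuine $L^1$ space.

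First I would fix a non-principal ultrafilter $\omega$ on $\N$ and a sequence $\la_k\to 0$ along which the rescaled pseudo-distances $\rho_{\la_k}:=\tfrac1{\la_k}\,s_{\la_k}^*(\ell_g^*\rho)$ converge, uniformly on compact subsets of $G\times G$, to $\rho_\infty$. For each $k$ define $g_k:G\to L^1$ by
\[
g_k(z)=\frac1{\la_k}\bigl(f(g\,s_{\la_k}z)-f(g)\bigr),
\]
so that $g_k(e)=0$ and $\|g_k(z_1)-g_k(z_2)\|_{L^1}=\rho_{\la_k}(z_1,z_2)$ for all $z_1,z_2\in G$. In particular $\|g_k(z)\|_{L^1}=\rho_{\la_k}(e,z)\le\Lip(f)\,d(e,z)$, so for each fixed $z$ the sequence $(g_k(z))_k$ is bounded in $L^1$, with a bound independent of $k$.

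Next I would form the Banach-space ultraproduct $L^1_\omega:=\bigl(\prod_k L^1\bigr)_\omega$, namely the quotient of the space of bounded sequences in $L^1$ by the subspace of sequences $(x_k)$ with $\lim_\omega\|x_k\|_{L^1}=0$, equipped with the norm $\|[(x_k)]\|=\lim_\omega\|x_k\|_{L^1}$. Since each $(g_k(z))_k$ is bounded, the assignment $f_\omega(z):=[(g_k(z))_k]$ defines a map $f_\omega:G\to L^1_\omega$, and
\[
\|f_\omega(z_1)-f_\omega(z_2)\|_{L^1_\omega}=\lim_\omega\|g_k(z_1)-g_k(z_2)\|_{L^1}=\lim_\omega\rho_{\la_k}(z_1,z_2)=\rho_\infty(z_1,z_2),
\]
where the last step uses that $\rho_{\la_k}(z_1,z_2)\to\rho_\infty(z_1,z_2)$ (pointwise convergence of reals already suffices), so the ultralimit coincides with the ordinary limit. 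Thus $f_\omega$ is $\Lip(f)$-Lipschitz and its pullback of the norm distance on $L^1_\omega$ is exactly $\rho_\infty$; note that the domain here is literally $G$, only the target is passed to an ultralimit, consistent with the identification $G_\omega\simeq G$ for the (proper) space $G$.

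Finally I would invoke Kakutani's representation theorem \cite{kakutani}: the ultraproduct $L^1_\omega$ inherits a Banach-lattice structure from the factors, and additivity of the norm on the positive cone passes to the ultralimit, so $L^1_\omega$ is an abstract $L$-space and is therefore isometric to $L^1(\nu)$ for some (in general not $\si$-finite) measure $\nu$. Composing $f_\omega$ with such an isometry yields a map $G\to L^1$ whose pullback distance is $\rho_\infty$, as required. The only genuinely nontrivial ingredient is this last identification via Kakutani; the remaining points are the routine bookkeeping of ultraproducts, with the main things to check being the uniform-in-$k$ bound $\|g_k(z)\|_{L^1}\le\Lip(f)\,d(e,z)$ that makes $f_\omega$ well defined, and the elementary fact that the ultralimit of a convergent sequence of reals is its limit.
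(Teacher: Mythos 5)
Your argument is correct and is essentially the paper's own: the paper forms the ultralimit of the rescaled pointed maps $(\tfrac{1}{\la_k}G,g)\to(\tfrac{1}{\la_k}L^1,f(g))$, identifies $G_\om$ with $G$, and invokes Kakutani's theorem to identify the ultraproduct target with an $L^1$ space. Your explicit recentering $g_k(z)=\tfrac{1}{\la_k}(f(g\,s_{\la_k}z)-f(g))$ is just the concrete realization of that ultralimit, so the two proofs coincide in substance.
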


\bigskip
\subsection{$L^1$ metrics and cut metrics}
\label{subsec-cutmetrics}

We refer the reader to \cite{ckbv} for more discussion of the 
material reviewed in this subsection.   We are using a slightly 
different setup here, working
with $L^1_{\loc}$ rather than $L^1$, but the adaptation to this setting
is straightforward.

We let $(X,\mu)$ denote a locally compact metric measure space, where 
$\mu$ is a Borel measure  which is finite on compact subsets of $X$.

A {\bf cut} in $X$ is an equivalence class of measurable subsets, where
two subsets $E,\,E'$ are equivalent if their symmetric difference has 
measure zero. We let $\cut(X)$ denote the collection of cuts in $X$.  
We may view $\cut(X)$ as a subset of  $ L^1_{\loc}(X)$, by identifying
a cut $E\in \cut(X)$ with its characteristic function $\chi_E\in L^1_{\loc}(X)$.
We will endow $\cut(X)$ with the topology induced by $L^1_{\loc}(X)$ via
this embedding. 

A {\bf cut measure} on $X$  is a Borel measure $\Si$ on $\cut(X)$
such that 
$$
\int_{\cut(X)}\;\mu(E\cap K)\;d\Si(E)<\infty
$$
for every compact subset $K\subset X$. 

For every cut measure $\Si$, there is a {\bf tautological} $\Si\times\mu$-measurable
function $\Phi:\cut(X)\times X\ra \{0,1\}$ such that for $\Si$-a.e. cut
$E$, we have $\Phi(E,x)=\chi_E(x)$  for $\mu$-a.e. $x\in X$; this function
is unique, up to sets of measure zero by Fubini's theorem.
For such a function $\Phi$, if $x\in X$, $E\in \cut(X)$,
we let  $\Phi_x=\Phi(\cdot,x)$ and $\Phi_E=\Phi(E,\cdot)$.

If $\Si$ is a cut measure with tautological function $\Phi$, then we obtain
an $L^1_{\loc}$ mapping $X\ra L^1(\cut(X),\Si)$ by sending 
$x\in X$ to $\Phi_x$.  In particular, there is a full measure subset
$Z\subset X$ such that if $x_1,\,x_2\in Z$, then $\Phi_{x_i}$ is 
$\Si$-integrable and so we obtain a (pseudo)distance 
$$
d_{\Si}(x_1,x_2)=\|\Phi_{x_1}-\Phi_{x_2}\|_{L^1(\cut(X),\Si)}\,,
$$
which is the {\bf cut metric} associated with the cut measure $\Si$. 
Modulo changing  $Z$ by a set of measure zero, the cut metric is independent
of the choice of tautological function $\Phi$.

A cut $E\in \cut(X)$ defines an {\bf elementary cut metric $d_E:X\times X\ra [0,\infty)$},
where $d_E(x_1,x_2)=|\chi_E(x_1)-\chi_E(x_2)|$.  Since $\Phi_E=\chi_E$ for
$\Si$-a.e. $E\in \cut(X)$, we may view the cut metric $d_{\Si}$ as a superposition
of elementary cut metrics:
\begin{equation}
\label{eqn-superposition}
d_{\Si}(x_1,x_2)=\int_{\cut(X)}\;
|\Phi_{x_1}(E)-\Phi_{x_2}(E)|\,d\Si(E)
=\int_{\cut(X)}\;d_E(x_1,x_2)\,d\Si(E)
\end{equation}

Notice that above discussion of cut measures and associated cut metrics
makes perfect sense for signed measures.   This leads to the notion of a {\bf signed
cut measure}, and the associated {\bf cut metric} which is still
given by  (\ref{eqn-superposition}), except that it  may  take negative
values.   
Signed cut measures will appear in Section \ref{sec-injectivity}.

Now let $f:(X,\mu)\ra L^1(Y,\nu)$
be an $L^1_{\loc}$ mapping, where $(Y,\nu)$ is a $\si$-finite measure
space,  and let $\rho=f^*d_{L^1(Y,\nu)}$ be the
pullback distance, $\rho(x_1,x_2)=\|f(x_1)-f(x_2)\|_{L^1(Y,\nu)}$.
Then $\rho$ arises from a cut measure:

\bigskip\begin{theorem}
\label{thm-cutmetricproperties}
There is a cut measure $\Si$ such that for any tautological 
function $\Phi:\cut(X)\times X\ra \{0,1\}$ as above, 
there is a full measure subset $Z\subset X$ such that if
$x_1,\,x_2\in Z$, then 
$\rho(x_1,x_2)=d_{\Si}(x_1,x_2)$.

\end{theorem}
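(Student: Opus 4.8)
The plan is to represent the Lipschitz map $f:(X,\mu)\ra L^1(Y,\nu)$ by its associated ``level set'' family, and then push forward a measure on $Y$ to obtain the cut measure $\Si$ on $\cut(X)$. Concretely, for each $y\in Y$ and $t\in\R$ I would consider the superlevel set $E_{y,t}=\{x\in X\mid f(x)(y)>t\}$; since $f(x)\in L^1_{\loc}$ this is defined for $\mu$-a.e.\ representative, and the classical ``layer cake'' identity gives, for fixed $x_1,x_2$,
\begin{equation*}
|f(x_1)(y)-f(x_2)(y)|=\int_{\R}\bigl|\chi_{E_{y,t}}(x_1)-\chi_{E_{y,t}}(x_2)\bigr|\,dt\,.
\end{equation*}
Integrating this over $y\in Y$ against $\nu$ and applying Fubini yields
\begin{equation*}
\rho(x_1,x_2)=\int_Y\int_{\R}d_{E_{y,t}}(x_1,x_2)\,dt\,d\nu(y)\,,
\end{equation*}
which already exhibits $\rho$ as a superposition of elementary cut metrics indexed by the parameter space $Y\times\R$. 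The cut measure $\Si$ is then the pushforward of $\nu\times\mathrm{Leb}_\R$ under the map $(y,t)\mapsto E_{y,t}\in\cut(X)$.

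The steps, in order, would be: (1) Fix a representative of $f$ as an honest function on $X\times Y$, measurable with respect to an appropriate product $\sigma$-algebra; here I would want $X$ second countable so that $L^1_{\loc}(X)$ and $\cut(X)$ are nicely behaved, and I would use the $\si$-finiteness of $(Y,\nu)$ so that Fubini applies cleanly. (2) Verify that $(y,t)\mapsto E_{y,t}$ is a Borel map into $\cut(X)\subset L^1_{\loc}(X)$; this is a standard measurability check, using that $t\mapsto\chi_{E_{y,t}}$ is monotone and $\nu$-a.e.\ right-continuous into $L^1_{\loc}$. (3) Define $\Si$ as the pushforward measure and check it is a cut measure, i.e.\ $\int\mu(E\cap K)\,d\Si(E)<\infty$ for compact $K$; this follows from $\int_K|f(x)(y)|\,d\mu(x)\,d\nu(y)<\infty$ (a consequence of local integrability plus, if needed, subtracting the value $f(x_0)$ at a basepoint to arrange integrability) together with the layer-cake bound $\mu(E_{y,t}\cap K)\le t^{-1}\int_K|f(x)(y)|\,d\mu$ for $t>0$ and the symmetric estimate for $t<0$. (4) Choose a tautological function $\Phi$ for $\Si$ and identify, on a full-measure set $Z\subset X$ (the intersection of the full-measure sets coming from Fubini for $\Phi$ and from the layer-cake identity holding pointwise), the quantity $\Phi_{x_1}(E)$ with $\chi_E(x_1)$, so that $d_\Si(x_1,x_2)$ computed via $\Phi$ agrees with the double integral above, hence with $\rho(x_1,x_2)$.

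The main obstacle I anticipate is purely measure-theoretic bookkeeping rather than any conceptual difficulty: making sure that the assignment $(y,t)\mapsto E_{y,t}$ is genuinely measurable into $\cut(X)$ with its $L^1_{\loc}$ topology, and that the various ``a.e.'' caveats (choice of representative of $f$ on $X\times Y$, the null sets where layer-cake fails, the null set in $\cut(X)$ where $\Phi_E\ne\chi_E$) can be assembled coherently so that a single full-measure $Z\subset X$ works. The key technical lemma is that the pushforward $\Si$ does not depend, modulo the stated ambiguities, on the chosen representative of $f$, and that redefining $Z$ by a null set absorbs the discrepancy between the concrete cuts $E_{y,t}$ and the abstract tautological function $\Phi$; this is exactly the kind of argument already invoked in the surrounding discussion of cut metrics, so I would expect to cite \cite{ckbv} for the analogous statement and merely indicate the modifications needed to pass from $L^1$ to $L^1_{\loc}$ and from $\nu$ $\si$-finite to the measure $\nu\times\mathrm{Leb}_\R$ on the parameter space.
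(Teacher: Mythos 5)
Your approach is the standard layer--cake representation of an $L^1$ metric as a cut metric, which is exactly the argument the paper delegates to \cite[Section 3]{ckbv}; the identity $\rho(x_1,x_2)=\int_Y\int_\R d_{E_{y,t}}(x_1,x_2)\,dt\,d\nu(y)$ and the definition of $\Si$ as the pushforward of $\nu\times\mathrm{Leb}_\R$ are correct, and your measurability and bookkeeping concerns are the right ones and are routine to resolve.

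One concrete point needs fixing. With $E_{y,t}=\{x\mid f(x)(y)>t\}$ taken for \emph{all} $t\in\R$, the pushforward is not a cut measure in the paper's sense: for $t\to-\infty$ the superlevel sets exhaust $X$, so $\mu(E_{y,t}\cap K)\to\mu(K)$ and $\int_{-\infty}^0\mu(E_{y,t}\cap K)\,dt=\infty$ whenever $\mu(K)>0$, so the required condition $\int_{\cut(X)}\mu(E\cap K)\,d\Si(E)<\infty$ fails. The Chebyshev bound you invoke, $\mu(E_{y,t}\cap K)\le t^{-1}\int_K|f(\cdot)(y)|\,d\mu$, is valid only for $t>0$; the ``symmetric estimate for $t<0$'' controls the \emph{complement} $\{x\mid f(x)(y)<t\}$, not $E_{y,t}$ itself. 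The remedy is to define the cut as the superlevel set for $t\ge 0$ and as the sublevel set $\{x\mid f(x)(y)\le t\}$ for $t<0$. This costs nothing in the main identity, since $d_E=d_{E^c}$ pointwise, but it is what makes $\Si$ a legitimate cut measure. With that change (and your basepoint normalization $f\mapsto f-f(x_0)$, which only reparametrizes $t$ and leaves the pushforward unchanged), the rest of your outline goes through as stated.
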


\bigskip
\begin{remark}

In some respects a more natural setting for the material
in this section would be a $\si$-finite measure space equipped with
an exhaustion $X_1\subset X_2\subset\ldots$ by finite measure subsets.
Since our applications  only involve 
 locally compact metric measure
spaces, we have chosen this setting.
\end{remark}

\bigskip
\section{Monotonicity  and geodesic maps to $L^1$}
\label{sec-monotonegeodesic}

In this section, we show that geodesic maps to $L^1$
may characterized by a monotonicity property of the cuts
in the support of the cut measure.

\subsection*{Geodesic maps from $\R$ to $L^1$}
We begin with the following observation:

\begin{lemma}
Suppose $f=(f_1,\ldots,f_n):\R\ra \ell^1(\R^n)$ is a continuous map.
Then  $f$ is a weakly monotonic parametrization of a geodesic
in $\ell^1(\R^n)$ if and only if each component $f_i:\R\ra\R$
is weakly monotonic.
\end{lemma}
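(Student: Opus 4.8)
The plan is to prove both implications by reducing the statement about the $\ell^1$-geodesic condition to an identity about one-dimensional total variation. Recall that for a continuous map $f=(f_1,\dots,f_n):\R\to\ell^1(\R^n)$, being a weakly monotonic (i.e.\ constant-speed, possibly with speed zero on subintervals) parametrization of a geodesic is equivalent to the additivity condition
\begin{equation*}
\|f(t_1)-f(t_3)\|_{\ell^1}=\|f(t_1)-f(t_2)\|_{\ell^1}+\|f(t_2)-f(t_3)\|_{\ell^1}
\qquad\text{whenever } t_1\le t_2\le t_3.
\end{equation*}
Writing this out coordinatewise, the left side is $\sum_i|f_i(t_1)-f_i(t_3)|$ and the right side is $\sum_i\bigl(|f_i(t_1)-f_i(t_2)|+|f_i(t_2)-f_i(t_3)|\bigr)$. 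Since for each $i$ the triangle inequality gives $|f_i(t_1)-f_i(t_3)|\le |f_i(t_1)-f_i(t_2)|+|f_i(t_2)-f_i(t_3)|$ termwise, the additivity holds if and only if equality holds in each coordinate, i.e.\ iff each $f_i$ satisfies the one-dimensional betweenness relation $|f_i(t_1)-f_i(t_3)|=|f_i(t_1)-f_i(t_2)|+|f_i(t_2)-f_i(t_3)|$ for all $t_1\le t_2\le t_3$.

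For the ``if'' direction: if each $f_i$ is weakly monotonic, then $f_i(t_2)$ lies between $f_i(t_1)$ and $f_i(t_3)$ for $t_1\le t_2\le t_3$, so the coordinatewise betweenness relation holds, hence so does the $\ell^1$ additivity, and summing the constant-speed parametrizations (after an affine reparametrization if one wants literal constant speed) shows $f$ parametrizes a geodesic. For the ``only if'' direction: assume $f$ is a weakly monotonic geodesic parametrization, so the $\ell^1$ additivity holds; by the termwise triangle inequality argument above, the coordinatewise betweenness relation holds for every $i$ and every ordered triple. It remains to deduce from ``$|f_i(t_1)-f_i(t_3)|=|f_i(t_1)-f_i(t_2)|+|f_i(t_2)-f_i(t_3)|$ for all $t_1\le t_2\le t_3$'' that $f_i$ is weakly monotonic. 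This is a standard real-variable fact: if $f_i$ were not weakly monotonic, there would be $t_1<t_2<t_3$ with $f_i(t_2)$ strictly outside the closed interval $[\min(f_i(t_1),f_i(t_3)),\max(f_i(t_1),f_i(t_3))]$, and for such a triple the betweenness relation fails (the right side strictly exceeds the left), a contradiction. One must be slightly careful that ``not weakly monotonic'' genuinely produces such a triple; this follows because a continuous function on $\R$ that is neither nondecreasing nor nonincreasing has points $a<b$ with $f_i(a)<f_i(b)$ and points $c<d$ with $f_i(c)>f_i(d)$, and a short case analysis on the relative order of $a,b,c,d$ (using continuity and the intermediate value theorem to interpolate) yields an ordered triple violating betweenness.

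The only mildly delicate step is this last real-variable lemma — extracting an explicit violating triple from the failure of weak monotonicity — but it is elementary and I expect it to occupy at most a few lines. The main conceptual content is simply the observation that the $\ell^1$ norm splits the geodesic (additivity) condition into independent conditions on the coordinates, each of which is equivalent to weak monotonicity of that coordinate; everything else is bookkeeping about reparametrizing to literal constant speed, which is harmless since the statement only asks for a ``weakly monotonic parametrization of a geodesic.''
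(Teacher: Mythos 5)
Your proposal is correct and follows essentially the same route as the paper: expand the $\ell^1$ norm coordinatewise, apply the triangle inequality term by term to see that additivity of the metric along ordered triples holds if and only if the betweenness relation holds in each coordinate, and identify coordinatewise betweenness with weak monotonicity of each $f_i$. The paper's proof is just a compressed version of yours (it ends with ``The lemma follows'' where you spell out the final real-variable step), so there is nothing to change.
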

\begin{proof}
For any  $a\leq b\leq c\in \R$,
$$
\|f(a)-f(c)\|=\sum_i\;|f_i(a)-f_i(c)|
\;
\leq\;\sum_i\;\left(|f_i(a)-f_i(b)|+|f_i(b)-f_i(c)|\right)
$$
$$
=\|f(a)-f(b)\|+\|f(b)-f(c)\|\;.
$$
Therefore we have equality if and only if $|f_i(a)-f_i(c)|=|f_i(a)-f_i(b)|+|f_i(b)-f_i(c)|$
for all $i$.  The lemma follows.
\end{proof}

It is natural to ask for an equivalent characterization in terms of the
associated cut measure.  This leads to:

\bigskip\begin{definition}
\label{def-monotoner}
A cut (or measurable subset) $E\subset\R$ is {\bf monotone} if it is equivalent
to a measurable subset which is connected, and has connected complement.
\end{definition}
Every monotone cut $E\subset\R$ may be represented by the empty set, a ray, or $\R$.
We use the word ``monotone'' for this condition, because  monotone functions have
monotone sublevel/superlevel sets, and  the characteristic function of a measurable
set is essentially monotone if and only if the set is monotone.

Given a distance function $\al$ on a subset $\{a,b,c\}\subset\R$,
where $a\leq b\leq c$, the {\bf excess} of $\al$ is the quantity
$\excess(\al)\{a,b,c\}=\al(a,b)+\al(b,c)-\al(a,c)\geq 0$.  Note that if
$d_E$ is the elementary cut metric associated with a measurable
subset $E\subset \R$,
then $E$ is monotone
if and only if $\excess(d_E)\{x_1,x_2,x_3\}=0$ for $\l^3$-a.e. triple
$(x_1,x_2,x_3)$.  To see this, observe that if $a<b\in \R$ lie in the support
of $E$ (respectively $\R\setminus E$), then $(a,b)\setminus E$ (respectively
$(a,b)\cap E$) has measure zero.

\bigskip\begin{lemma}
\label{lem-geodesicmonotoner}
Suppose $f:\R\ra L^1$ is an $L^1_{\loc}$ mapping.  Then the following
are equivalent:
\begin{enumerate}
\item There is a full measure subset $Z\subset \R$ such that
if $z_1,\,z_2,\,z_3\in Z$ and $z_1\leq z_2\leq z_3$, then 
$$\|f(z_1)-f(z_3)\|_{L^1}=\|f(z_1)-f(z_2)\|_{L^1}+\|f(z_2)-f(z_3)\|_{L^1}\,.
$$
\item If $\Si$ is the cut measure associated with $f$, then  $\Si$-a.e. 
cut $E$ is monotone.
\end{enumerate}
\end{lemma}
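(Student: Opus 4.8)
The plan is to prove the equivalence $(1)\Leftrightarrow(2)$ by reducing everything to the excess functional and applying the superposition formula~(\ref{eqn-superposition}) together with the preceding lemma's characterization of monotone cuts in~$\R$. First I would reformulate condition~(1): since $\rho=f^*d_{L^1}$, condition~(1) says precisely that there is a full measure set $Z$ on which $\excess(\rho)\{z_1,z_2,z_3\}=0$ for all ordered triples $z_1\le z_2\le z_3$ in $Z$. Equivalently, viewing $\excess(\rho)$ as a function on ordered triples in $\R^3$, condition~(1) is the statement that $\excess(\rho)=0$ for $\l^3$-a.e.\ ordered triple. (The passage between ``full measure $Z$ with all triples good'' and ``a.e.\ triple good'' is a routine Fubini argument, using that $\rho$ is continuous off a null set and that the excess is continuous in its arguments.)

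Next I would bring in the cut measure. By Theorem~\ref{thm-cutmetricproperties}, there is a cut measure $\Si$ and a full measure set (which I may fold into $Z$) on which $\rho=d_\Si$, so by~(\ref{eqn-superposition}),
$$
\rho(x_1,x_2)=\int_{\cut(\R)}\;d_E(x_1,x_2)\,d\Si(E)
$$
for $x_1,x_2\in Z$. Since the excess functional is \emph{linear} in the underlying distance function — $\excess(\al+\be)=\excess(\al)+\excess(\be)$ — and since each elementary cut metric $d_E$ satisfies $\excess(d_E)\{x_1,x_2,x_3\}\ge 0$ always, I can integrate this inequality and interchange with the excess. Concretely, for a.e.\ ordered triple $(x_1,x_2,x_3)$,
$$
\excess(\rho)\{x_1,x_2,x_3\}=\int_{\cut(\R)}\;\excess(d_E)\{x_1,x_2,x_3\}\,d\Si(E)\,.
$$
Here the interchange of the (finite, signed) excess combination with the integral is justified because each of the three terms $d_E(x_i,x_j)$ is $\Si$-integrable for a.e.\ pair by the definition of a cut measure, so Fubini applies to the three-term combination.

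Now the equivalence falls out. If $(2)$ holds, then $\Si$-a.e.\ $E$ is monotone, hence by the observation preceding this lemma $\excess(d_E)\{x_1,x_2,x_3\}=0$ for $\l^3$-a.e.\ triple; by Fubini the set of triples that are ``good for a.e.\ $E$'' has full measure in $\R^3$, so the integrand above vanishes for a.e.\ triple, giving $\excess(\rho)=0$ a.e., which is~$(1)$. Conversely, if $(1)$ holds, then the nonnegative integrand $\excess(d_E)\{x_1,x_2,x_3\}$ integrates to zero for a.e.\ triple; applying Fubini in the other direction (over $\cut(\R)\times\R^3$ with measure $\Si\times\l^3$), the function $(E,x_1,x_2,x_3)\mapsto\excess(d_E)\{x_1,x_2,x_3\}$ vanishes $(\Si\times\l^3)$-a.e., hence for $\Si$-a.e.\ $E$ it vanishes for $\l^3$-a.e.\ triple, which by the same observation means $E$ is monotone — this is~$(2)$.

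The main obstacle is purely measure-theoretic bookkeeping rather than geometry: one must be careful that the tautological function $\Phi$ makes $(E,x)\mapsto\Phi(E,x)$ jointly measurable so that $\excess(d_E)\{x_1,x_2,x_3\}$ is jointly measurable in $(E,x_1,x_2,x_3)$, and that the integrability hypothesis in the definition of a cut measure is exactly what licenses the two applications of Fubini (one to move the excess inside the integral over $\cut(\R)$, one to pass between ``$\Si$-a.e.\ $E$, $\l^3$-a.e.\ triple'' and ``$(\Si\times\l^3)$-a.e.''). I would also note explicitly, as the lemma preceding the definition already hints, that the order structure on $\R$ is what makes the nonnegativity $\excess(d_E)\ge 0$ sharp precisely for monotone $E$; no such clean statement would survive on a general space, which is why the subsequent sections need the more elaborate notion of monotonicity along lines.
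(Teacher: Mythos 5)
Your proposal is correct and follows essentially the same route as the paper: write $\rho=\int_{\cut(\R)}d_E\,d\Si(E)$, use linearity of the excess functional to move it inside the integral, and then exploit the nonnegativity of $\excess(d_E)$ together with Fubini on $\Si\times\l^3$ to identify the vanishing of the excess for a.e.\ triple with the monotonicity of $\Si$-a.e.\ cut. The only cosmetic difference is that the paper integrates the excess over all of $\R^3$ first and observes that the vanishing of the resulting double integral is equivalent to each of (1) and (2), whereas you argue pointwise-a.e.\ and then apply Fubini; both treatments leave the same small step (passing from ``a.e.\ triple'' to ``all triples drawn from a full-measure $Z$'') at the same level of informality.
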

\begin{proof}

Let $\Si$ be the cut measure on $X$ guaranteed by
Theorem \ref{thm-cutmetricproperties}, and let $\rho:\R\times\R\ra [0,\infty)$
be the pullback of the distance on $L^1$ by $f$, so 
$$
\rho=\int_{\cut(\R)}\,d_E\,d\Si(E)\,
$$
where $d_E$ is the elementary cut metric associated with $E$.
  Then 
$$
\int_{\R^3}\;\excess(\rho)\{z_1,z_2,z_3\}\,d\l^3(z_1,z_2,z_3)
$$
$$
=\int_{\R^3}\;\excess\left(\int_{\cut(\R)}\;d_E\,d\Si(E)\right)\{z_1,z_2,z_3\}\,d\l^3(z_1,z_2,z_3)
$$$$
=\int_{\R^3}\;\left(
\int_{\cut(\R)}\;\excess(d_E)\{z_1,z_2,z_3\}\,d\Si(E)\right)\,d\l^3(z_1,z_2,z_3)\,.
$$
By Fubini's theorem, it follows that the vanishing of the quantity above is equivalent
to either (1) or (2). 
\end{proof}

\bigskip
\subsection*{$L^1$-mappings which are geodesic along a family of curves}

We now consider an $L^1_{\loc}$-mapping $f:(X,\mu)\ra L^1$
with associated cut measure $\Si$, 
where $(X,\mu)$ is a locally compact space and $\mu$ is 
a Radon measure.  We would
like to examine the implications for $\Si$ when 
$f$ is geodesic along certain curves in $X$.  However, both 
 cuts and  $L^1$ mappings are only 
defined up to sets of measure zero, and since
curves typically have measure zero, this relation only becomes meaningful
when we consider generic curves belonging to a sufficiently rich family.
We formalize this as follows.

 Let  
$\P$ be a locally compact measure space equipped with a Radon
measure $\pi$, and $\Ga:\R\times \P\ra X$ be 
a continuous  map  such that the pushforward measure
satisfies $\Ga_*(\l\times\pi)\leq C \mu$ for some $C\in \R$. 
 If $p\in \P$, we denote the map
$t\mapsto \Ga(t,p)$ by $\Ga_p$.

\bigskip\begin{definition}
\label{def-gamonotone}
A measurable subset $E\subset X$ is {\bf $\Ga$-monotone} if
for $\pi$-a.e. $p\in \P$,  the inverse image $\Ga_p^{-1}(E)\subset \R$
is measurable and monotone.  By Fubini's theorem, this property
is shared by all measurable subsets representing the same cut, and
hence we may speak of {\bf $\Ga$-monotone cuts}.
\end{definition}

\bigskip\begin{proposition}
\label{prop-geodesicmonotone}
The following are equivalent:
\begin{itemize}
\setlength{\itemsep}{.75ex}%
\setlength{\parskip}{.75ex}%
\item For $\pi$-a.e. $p\in \P$, the composition $f\circ \Ga_p:\R\ra L^1$
is a geodesic map, i.e. it satisfies the conditions of Lemma \ref{lem-geodesicmonotoner}.
\item  $\Si$-a.e. cut $E$
is $\Ga$-monotone.

\end{itemize}
\end{proposition}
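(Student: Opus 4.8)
The plan is to reduce the proposition to the one-dimensional statement already established in Lemma~\ref{lem-geodesicmonotoner}, by integrating the excess of $\rho$ along the curves $\Ga_p$ and using Fubini's theorem twice. First I would recall that $f$ has an associated cut measure $\Si$ (Theorem~\ref{thm-cutmetricproperties}), with tautological function $\Phi$, so that the pullback pseudo-distance is $\rho=\int_{\cut(X)}d_E\,d\Si(E)$ on a full-measure subset $Z\subset X$. For fixed $p\in\P$, the composition $f\circ\Ga_p:\R\ra L^1$ has pullback pseudo-distance $\rho\circ(\Ga_p\times\Ga_p)$, and by Lemma~\ref{lem-geodesicmonotoner} this map is geodesic in the sense of condition~(1) there if and only if
\[
\int_{\R^3}\excess(\rho)\{\Ga_p(t_1),\Ga_p(t_2),\Ga_p(t_3)\}\,d\l^3(t_1,t_2,t_3)=0,
\]
where the integrand is understood with $t_1\le t_2\le t_3$. (One should note here, as in the discussion preceding Lemma~\ref{lem-geodesicmonotoner}, that condition~(1) for $f\circ\Ga_p$ is equivalent to the vanishing of this integral, using that $\rho\circ(\Ga_p\times\Ga_p)\le C'\,|t_1-t_3|$ for the relevant rescaled metric so the integrand is locally integrable; the hypothesis $\Ga_*(\l\times\pi)\le C\mu$ together with $\rho\le\Lip(f)\,d$ is what makes everything integrable after we integrate in $p$ as well.)

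Next I would integrate this over $p\in\P$ against $\pi$, and expand $\rho$ using its cut representation. The key computation is the chain of equalities
\[
\int_\P\!\int_{\R^3}\!\excess(\rho)\{\Ga_p(t_i)\}\,d\l^3\,d\pi(p)
=\int_\P\!\int_{\R^3}\!\int_{\cut(X)}\!\excess(d_E)\{\Ga_p(t_i)\}\,d\Si(E)\,d\l^3\,d\pi(p),
\]
which uses the linearity of $\excess$ in the metric together with Tonelli's theorem (the integrand $\excess(d_E)\ge 0$, so positivity of the integrand justifies interchanging all three integrals freely). Reordering, this equals
\[
\int_{\cut(X)}\left(\int_\P\!\int_{\R^3}\excess(d_E)\{\Ga_p(t_i)\}\,d\l^3\,d\pi(p)\right)d\Si(E).
\]
For a fixed cut $E$, the inner double integral $\int_\P\big(\int_{\R^3}\excess(d_E)\{\Ga_p(t_i)\}\,d\l^3\big)\,d\pi(p)$ vanishes precisely when, for $\pi$-a.e.\ $p$, one has $\excess(d_E)\{\Ga_p(t_1),\Ga_p(t_2),\Ga_p(t_3)\}=0$ for $\l^3$-a.e.\ ordered triple; by the observation recorded before Lemma~\ref{lem-geodesicmonotoner} (a characteristic function is essentially monotone iff its set is monotone, detected by a.e.\ vanishing of excess) this says exactly that $\Ga_p^{-1}(E)$ is monotone for $\pi$-a.e.\ $p$, i.e.\ that $E$ is $\Ga$-monotone in the sense of Definition~\ref{def-gamonotone}.

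Putting these together: the first bullet in the proposition says the integral over $\P$ of the excess integral vanishes; the displayed identity then says $\int_{\cut(X)}(\text{inner double integral})\,d\Si(E)=0$; since the inner double integral is a nonnegative function of $E$, this forces it to vanish for $\Si$-a.e.\ $E$, which by the previous paragraph is exactly the assertion that $\Si$-a.e.\ $E$ is $\Ga$-monotone, i.e.\ the second bullet. Conversely, if $\Si$-a.e.\ cut is $\Ga$-monotone then the inner double integral vanishes $\Si$-a.e., hence its $\Si$-integral vanishes, hence the $\pi$-integral of the excess integral vanishes, hence for $\pi$-a.e.\ $p$ the excess integral is zero and $f\circ\Ga_p$ is geodesic. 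The main obstacle I anticipate is not the Fubini juggling itself but the careful handling of null sets: one must choose the full-measure set $Z$ and the tautological function $\Phi$ so that the identity $\rho=d_\Si$ holds on $Z\times Z$, and then verify using $\Ga_*(\l\times\pi)\le C\mu$ that for $\pi$-a.e.\ $p$ the curve $\Ga_p$ meets $Z$ in a set of full $\l$-measure, so that the a.e.\ statements about triples $(\Ga_p(t_1),\Ga_p(t_2),\Ga_p(t_3))$ and about cuts $E$ on those curves are genuinely meaningful and transfer correctly between the two sides.
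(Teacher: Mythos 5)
Your argument is correct and takes essentially the same route as the paper: both reduce the statement to Lemma \ref{lem-geodesicmonotoner} by a Tonelli/Fubini argument on the nonnegative excess, using the cut representation of $\rho$ together with the pushforward bound $\Ga_*(\l\times\pi)\leq C\mu$ to transfer the a.e.\ statements between $X$ and the curves. The only difference is organizational -- the paper forms the pushforward cut measures $\Si_p=\cut(\Ga_p)_*\Si$ on $\cut(\R)$ and records the matching of tautological functions as Lemma \ref{lem-twosame}, which is precisely the null-set issue you correctly flag in your final paragraph -- whereas you integrate the excess directly over $\P\times\R^3\times\cut(X)$.
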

\begin{proof}
In brief, after sorting out the behavior of cut metrics under
composition of maps and slicing, this follows from the previous
lemma.

Let $\Si$, the map $\Phi:\cut(X)\times X\ra \{0,1\}$, and $Z\subset X$ be as in Theorem  
\ref{thm-cutmetricproperties}, and let $\rho=f^*d_{L^1(Y,\nu)}$
be the pullback distance.

 For $\pi$-a.e. $p\in\P$,  the map $\Ga:\R\times\P\ra X$ induces a map
$\cut(\Ga_p):\cut(X)\ra \cut(\R)$ given by $\cut(\Ga_p)(E)=\Ga_p^{-1}(E)$;
 pushing $\Si$ forward 
under $\cut(\Ga_p)$ we get an $L^1_{\loc}$ cut  measure
$\Si_p$ on $\cut(\R)$.   For such $p$, we may choose 
a $(\Si_p\times\l)$-measurable function $\hat\Phi^p:\cut(\R)\times\R\ra \{0,1\}$
such that for $\Si_p$-a.e. $E\in \cut(\R)$ we have $\hat\Phi^p(E,\cdot)=\chi_E$.
Also, for $\pi$-a.e. $p\in \P$, we have
a well-defined $L^1_{\loc}$-map $f_p=f\circ\Ga_p:\R\ra L^1(Y,\nu)$,
and a pullback distance $\rho_p=f_p^*d_{L^1(Y,\nu)}$.

\bigskip
\begin{lemma}
\label{lem-twosame}
For $\pi$-a.e. $p\in\P$, and $\Si$-a.e. $E\in \cut(X)$,
$$
\Phi(E,\Ga_p(t))=\hat\Phi^p(\cut(\Ga_p)(E),t)
$$ for $\l$-a.e. $t\in\R$.
\end{lemma}
\begin{proof}
By Fubini's theorem and the defining properties of $\Phi$ and $\hat\Phi^p$, 
for $\pi$-a.e. $p\in \P$, and $\Si$-a.e. $E\in \cut(X)$, 
$$
\Phi(E,\Ga(p,t))=\chi_{\cut(\Ga_p)(E)}(t)=\hat\Phi^p(\cut(\Ga_p)(E),t)
$$
for $\l$-a.e. $t\in \R$.
\end{proof}

\bigskip
Now for $\pi$-a.e. $p\in \P$, there is a full measure set
$T_p\subset\R$ such that $\Ga_p(T_p)\subset Z$, and therefore
for every $t_1,\,t_2\in T_p$ we get
$$
\rho_p(t_1,t_2)=\int_{\cut(X)}\;|\Phi_{\Ga_p(t_1)}(E)-\Phi_{\Ga_p(t_2)}(E)|\,d\Si(E)
$$
$$
=\int_{\cut(\R)}\;|\hat\Phi^p_{t_1}(E)-\hat\Phi^p_{t_2}(E)|\,d\Si_p(E)
$$
by Lemma \ref{lem-twosame}.  Lemma \ref{lem-geodesicmonotoner}
implies that $\rho_p$ satisfies the conditions of the lemma for 
$\pi$-a.e. $p\in\P$ if and only if $\Phi(E,\Ga_p(\cdot))$
is the characteristic function of a monotone set for $\pi$-a.e.
$p\in\P$ and $\Si$-a.e. $E\in \cut(X)$. 

\end{proof}

\bigskip
Next we apply Proposition  \ref{prop-geodesicmonotone} to  a Carnot group $G$.  
We recall that $\L(G)$ denotes the family of (horizontal) lines in $G$; we
let 
$\P$ be the family of unit speed parametrized lines in $G$.  Here
$\L(G)$ and $\P$  have natural  smooth structures
such that   the
tautological map $\P\ra \L(G)$ is a smooth fibration with fibers
diffeomorphic to the Lie group $\isom(\R)$.  We endow $\P$
and the space of line $\L(G)$ with smooth measures with positive
density.  If $\Ga:\R\times\P\ra G$
is defined by $\Ga(t,p)=p(t)$, then Fubini's theorem implies that
a measurable set $E\subset G$ is $\Ga$-monotone if and only if
the intersection $E\cap L$ is a monotone subset of $L\simeq \R$
for almost every $L\in \L(G)$.

\begin{definition}
\label{def-monotoneg}
A measurable subset (or cut)  $E\subset G$ is {\bf monotone}
if $E\cap L$ is a monotone subset of $L\simeq \R$ for almost every
$L\in \L(G)$.
\end{definition}
With this definition, Proposition  \ref{prop-geodesicmonotone} yields:

\begin{corollary}
Let $f:G\ra L^1$ be an $L^1_{\loc}$-mapping with associated 
cut measure $\Si$,  such that $f\restr_L$
is a geodesic map for almost every $L\in \L(G)$.  Then 
 $\Si$-a.e.  $E\in \cut(G)$ is monotone. 
\end{corollary}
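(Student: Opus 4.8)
The plan is to deduce the corollary from Proposition \ref{prop-geodesicmonotone} applied to the family $\Ga\colon\R\times\P\ra G$, $\Ga(t,p)=p(t)$, where $\P$ is the space of unit-speed parametrized lines; recording a parametrized line by its initial point $x=p(0)$ and unit velocity $X$ identifies $\P$ with $\{(X,x):X\in\De,\ \|X\|=1,\ x\in G\}$, so that $\Ga(t,(X,x))=x\exp(tX)$. Three points must be checked: (i) the standing hypothesis $\Ga_*(\l\times\pi)\le C\mu$ for $\mu=\l$ the Haar measure on $G$ and a suitable smooth positive-density $\pi$ on $\P$; (ii) that the geodesic hypothesis on $f$ along almost every line is equivalent to the first bullet of Proposition \ref{prop-geodesicmonotone}; and (iii) that $\Ga$-monotonicity agrees with monotonicity in the sense of Definition \ref{def-monotoneg}. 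Point (iii) is precisely the Fubini observation recorded immediately before that definition, so nothing new is required there.

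Point (i) is the only one needing care, because the naive ($G$-invariant) choice of $\pi$ makes $\Ga_*(\l\times\pi)$ infinite on compact sets. I would instead take $\pi=\nu\times(w\,\l)$ with $\nu$ a smooth positive measure on the compact factor $\{X\in\De:\|X\|=1\}$ and $w\colon G\ra(0,\infty)$ a smooth weight decaying rapidly at infinity (say $w\le(1+\|g\|)^{-N}$ for a homogeneous norm $\|\cdot\|$ and $N$ large). Since right translations preserve Haar measure, Tonelli's theorem gives, for compact $K\subset G$,
$$
\Ga_*(\l\times\pi)(K)=\int\int_K\Big(\int_\R w\big(y\exp(sX)\big)\,ds\Big)\,d\l(y)\,d\nu(X),
$$
so it suffices to bound $\int_\R w(y\exp(sX))\,ds$ uniformly in $y$ and $X$. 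As lines are geodesics in $G$ (a line is length-minimizing because $\pi\colon G\ra G/[G,G]$ is $1$-Lipschitz and preserves the length of horizontal curves), one has $d(e,y\exp(sX))\ge\big||s|-d(e,y)\big|$, whence $\|y\exp(sX)\|$ grows at least linearly in $|s|$ with constants independent of $y$ and $X$; the rapid decay of $w$ then bounds the integral, giving $\Ga_*(\l\times\pi)\le C\mu$.

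For (ii), note that whether $f\circ\Ga_p$ satisfies the conditions of Lemma \ref{lem-geodesicmonotoner} depends only on the line $L=\Ga_p(\R)$ and not on the parametrization $p$, these conditions being invariant under precomposition with isometries of $\R$. Since $w>0$ everywhere, $\pi$ has the same null sets as $\nu\times\l$, whose pushforward under $\P\ra\L(G)$ is a smooth positive-density measure on $\L(G)$; hence "$f\circ\Ga_p$ is geodesic for $\pi$-a.e.\ $p$" is equivalent to "$f\restr_L$ is geodesic for almost every $L\in\L(G)$", which is our hypothesis. With (i)--(iii) in place, Proposition \ref{prop-geodesicmonotone} yields that $\Si$-a.e.\ cut $E\in\cut(G)$ is $\Ga$-monotone, hence monotone by Definition \ref{def-monotoneg}; this is the assertion of the corollary. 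I expect the cleanest exposition to fix such a $\pi$ once and for all at the point where $\P$ is introduced, reducing the proof of the corollary to the two lines "apply Proposition \ref{prop-geodesicmonotone}, then invoke the Fubini observation".
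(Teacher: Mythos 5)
Your proof is correct and follows essentially the same route as the paper: the paper simply sets up the family $\Ga(t,p)=p(t)$ of parametrized lines, notes the Fubini equivalence between $\Ga$-monotonicity and monotonicity in the sense of Definition \ref{def-monotoneg}, and then states that Proposition \ref{prop-geodesicmonotone} yields the corollary. The only difference is that you explicitly verify the hypothesis $\Ga_*(\l\times\pi)\leq C\mu$ by choosing a decaying density on $\P$, a point the paper leaves implicit in the phrase ``smooth measures with positive density''; your verification is sound.
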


In Section \ref{sec-monotone} we will show that 
nontrivial monotone
subsets of $\H$ are  half-spaces, modulo sets of measure zero.

\bigskip\bigskip\section{The classification of precisely monotone sets}
\label{sec-preciselymonotone}

Throughout this section $\D E$ will denote the topological frontier (boundary)
of a subset $E$.

In Section \ref{sec-monotone} we will classify monotone subsets of the Heisenberg
group.   Before doing this,  we
first consider the easier task of classifying 
precisely monotone sets:

\begin{definition}
Let $X$ be either $\R^n$ or $\H$.
A subset $E\subset X$ is {\bf precisely monotone} if
 every line $L\in \L(X)$ intersects both $E$ and its complement in
a connected set.
\end{definition}
Thus in the $\R^n$ case, a precisely monotone set is a convex
set with convex complement.

\begin{lemma}
If $E\subset\R^n$ is  precisely monotone, then 
either $E=\emptyset$, $E=\R^n$, or
$C\subset E\subset \ol{C}$ for some (open)
half-space $C\subset \R^n$.
\end{lemma}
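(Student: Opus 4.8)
The plan is to analyze the structure of a precisely monotone set $E \subset \R^n$ by slicing along lines and using a connectedness/convexity argument. First I would dispose of the trivial cases: if $E = \emptyset$ or $E = \R^n$ we are done, so assume both $E$ and $\R^n \setminus E$ are nonempty. The key claim is that the interior $\Int(E)$ is convex and that $E \subset \ol{\Int(E)}$, and likewise for the complement; from this, standard facts about convex sets with nonempty interior (namely, a convex set with convex complement, both with nonempty interior, is a half-space up to boundary) finish the job.

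To prove convexity of $\Int(E)$: take $p, q \in \Int(E)$. The segment $[p,q]$ lies on a line $L \in \L(\R^n)$ (here lines are honest affine lines since $G = \R^n$ is abelian, so $\L(\R^n)$ is just the set of affine lines), and by precise monotonicity $E \cap L$ is connected, hence is a subinterval of $L$ containing $p$ and $q$, so it contains $[p,q]$; thus $[p,q] \subset E$. To upgrade this to $[p,q] \subset \Int(E)$, I would perturb: for any interior point $r \in [p,q]$, since $p,q$ are interior points there are small balls $B(p,\eps), B(q,\eps) \subset E$, and every point of the ``spindle'' $\bigcup_{p' \in B(p,\eps),\, q' \in B(q,\eps)} [p',q']$ lies in $E$ by the same line argument; this spindle contains a neighborhood of $r$, so $r \in \Int(E)$. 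Hence $\Int(E)$ is convex, and by the identical argument applied to the complement, $\Int(\R^n \setminus E) = \R^n \setminus \ol{E}$ is convex.

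Next I would show $\ol{\Int(E)} \supset E$ (equivalently $\D E$ has empty interior and $E$ has no isolated ``thin'' pieces). Suppose some $x \in E$ is not in $\ol{\Int(E)}$; then a whole ball $B(x,\eps)$ misses $\Int(E)$, so $B(x,\eps) \subset \D E \cup (\R^n \setminus E)$. The set $\R^n \setminus E$ is open minus its interior... more carefully: pick any line $L$ through $x$ in a generic direction; $E \cap L$ is a connected subset of $L$ containing $x$ but, since $B(x,\eps)$ contains no interior points of $E$, the set $E \cap L \cap B(x,\eps)$ can have no interior in $L$ either unless... — this is the step I expect to be the main obstacle, since one must rule out lower-dimensional ``slabs'' of $E$ that are convex with convex complement but not open. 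The clean way is: both $\Int(E)$ and $\Int(\R^n\setminus E)$ are open convex sets, they are disjoint, and their union is dense (its complement is the nowhere-dense set $\D E \cap \D(\R^n\setminus E)$, which by convex separation must lie in a hyperplane once we know $\Int(E)$ and its complementary interior are nonempty — if both are nonempty, a separating hyperplane $P$ exists, and then $\Int(E)$ and $\Int(\R^n\setminus E)$ are the two open half-spaces bounded by $P$, forcing $C \subset E \subset \ol C$ with $C$ that open half-space). The remaining case is that one of the two interiors is empty, say $\Int(\R^n\setminus E) = \emptyset$, i.e. $E$ is dense; but then precise monotonicity forces $\R^n \setminus E$ to meet every line in a connected set with empty interior, so $\R^n\setminus E$ lies in a hyperplane (a set meeting every line in at most a point, after checking no line is entirely outside $E$), and then $E$ contains an open half-space complement — actually $E \supset \R^n \setminus P$ for a hyperplane $P$, which still has the form $C \subset E \subset \ol C$ only if $\R^n\setminus E$ is on one side; one checks directly that $\R^n \setminus E$ contained in a hyperplane with convex complement forces it to be either empty, a half-hyperplane, or all of $P$, but half of a hyperplane is not convex's complement's... — so in fact this degenerate case collapses: if $E$ is dense and precisely monotone with nonempty complement, the complement meets each line in a point or is empty on that line, and convexity of the complement then pins it inside an affine subspace; a careful check shows the only possibility consistent with \emph{the complement also being precisely monotone} (meeting lines in connected sets) is that the complement is contained in a hyperplane, giving $\R^n \setminus \ol C \subset \R^n\setminus E$... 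I would organize this so that the separating-hyperplane theorem is the single engine: once $\Int(E)$ and $\Int(\R^n\setminus E)$ are both nonempty open convex sets, separate them by a hyperplane $P$ and conclude; and handle ``$E$ dense'' and ``$E$ has empty interior'' as symmetric easy cases using that a convex set meeting every line in a connected set of empty interior lies in a hyperplane.

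In summary: (1) reduce to $E, E^c$ both nonempty; (2) show $\Int(E)$ and $\Int(E^c)$ are convex via the segment-and-spindle argument; (3) if both are nonempty, separate by a hyperplane $P$ and deduce they are the two open half-spaces bounded by $P$, whence $C \subset E \subset \ol C$; (4) dispose of the cases where one interior is empty by showing the set with empty interior is contained in a hyperplane and then $E$ or $E^c$ again sits between an open half-space and its closure. The main obstacle is step (4) — controlling precisely monotone sets that are ``thin'' — and the tool to handle it is that precise monotonicity forces any such thin set to meet lines in connected subsets, which combined with convexity confines it to an affine hyperplane.
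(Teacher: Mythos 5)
Your approach is essentially the paper's: the paper observes that in $\R^n$ precise monotonicity just says $E$ and $E^c$ are both convex, and then disposes of the lemma in one line by taking a supporting (separating) half-space. Your core argument --- $E\cap L$ connected for every line forces $E$ and $E^c$ to be convex, then separate the two by a hyperplane $P$ and note that $\{l>0\}\subset E\subset\{l\ge 0\}$ --- is correct and is the intended proof. Two remarks. First, you do not need the intermediate claims that $\Int(E)$ is convex or that $E\subset\ol{\Int(E)}$; the separation theorem applies directly to the disjoint nonempty convex sets $E$ and $E^c$ (in finite dimensions no interiority hypothesis is needed), and the conclusion $C\subset E\subset\ol C$ falls out immediately, so the ``spindle'' construction is superfluous. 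Second, and this is the one genuine soft spot: your step (4) is circling around a case that in fact cannot occur, and you never close it. If, say, $E^c$ is nonempty with empty interior, then as a convex set with empty interior it lies in an affine hyperplane $P$, so $E\supset\R^n\setminus P$; but $E$ is convex and the convex hull of $\R^n\setminus P$ is all of $\R^n$, whence $E=\R^n$ and $E^c=\emptyset$, a contradiction. So the degenerate case is vacuous --- it is not a situation where ``$E$ or $E^c$ again sits between a half-space and its closure,'' as you suggest --- and with that one sentence your proof is complete.
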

This follows immediately by looking at a supporting half-space
for $E$, assuming both $E$ and its complement are nonempty.

We now focus on precisely monotone subsets of the Heisenberg
group:

\begin{theorem}
\label{thm-preciselymonotone}
If $E\subset\H$ is a precisely monotone subset,
then either
$E=\emptyset$, $E=\H$, or
 $C\subset E\subset \ol{C}$ for some half-space $C\subset\H$.
\end{theorem}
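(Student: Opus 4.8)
The plan is to understand precisely monotone sets in $\H$ by slicing in two complementary ways: along lines in a fixed horizontal direction, and along families of lines parametrized by a vertical plane. First I would dispose of the trivial cases by assuming that both $E$ and $\H \setminus E$ are nonempty, so that the topological frontier $\D E$ is nonempty. The core idea is that precise monotonicity forces $\D E$ to be, locally and then globally, a graph that is both ``flat'' in the horizontal directions and constrained in the vertical direction; the only surfaces meeting every line in a point (or not at all) are the vertical and horizontal planes. To make this precise I would fix a point $p \in \D E$ and study the lines through $p$: each such line $L$ meets $E$ in a connected set and $\H \setminus E$ in a connected set, so $p$ is an endpoint of the interval $L \cap E$ (unless $L$ lies in $\overline{E}$ or in $\H \setminus \Int E$ entirely). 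Thus the horizontal plane $\Pi_p$ centered at $p$ is divided by $\D E$ into the part pointing into $E$ and the part pointing into the complement.

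Next I would exploit the geometry lemmas from the preliminaries, especially Lemma~\ref{lem-geometry}. The point is to show that knowing how $\D E$ separates one horizontal plane strongly constrains how it separates nearby ones. Concretely, take two parallel lines $L_1, L_2$ with $\pi(L_1) \neq \pi(L_2)$: by part~(\ref{item-parallel}) there is a whole fiber $\pi^{-1}(x)$ halfway between them, each point of which lies on a line joining $L_1$ to $L_2$. If $E$ is on the ``$E$-side'' of both $L_1$ and $L_2$ in a consistent way, the connectedness of $L \cap E$ along each of these joining lines propagates the separation from one plane to the next; if the separation is inconsistent, one finds a line meeting $E$ in a disconnected set, a contradiction. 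Similarly, part~(\ref{item-skew}) handles skew directions via the hyperbola of lines meeting two given skew lines. Running this propagation argument over all pairs of lines should show that $\D E$ meets every vertical plane $P = \pi^{-1}(\ell)$ (where $\D E \cap P \neq \emptyset$) in a set that is itself ``monotone'' inside $P$, and that the normal direction to $\D E$ is locally constant --- i.e., $\D E$ is a single plane.

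I would organize the argument by first showing $\D E$ is a topological surface (or at least that its closure, together with the structure of $E$, behaves like the boundary of a half-space), then showing that through each point of $\D E$ the set of tangent lines to $\D E$ is a hyperplane in the tangent space, and finally that this tangent hyperplane is the same everywhere --- forcing $\D E$ to be a vertical or horizontal plane by a connectedness/integration argument. The dichotomy vertical versus horizontal arises because these are exactly the two types of smooth surfaces in $\H$ every line of which is either tangent or transverse-once: a vertical plane $\pi^{-1}(\ell)$ contains exactly the lines over $\ell$ and meets every other line once, while a horizontal plane $\Pi_p$ is the union of all lines through $p$. Once $\D E$ is identified with such a plane $P$, precise monotonicity forces $E$ to be one of the two components of $\H \setminus P$ together with possibly part of $P$ itself, i.e. $C \subseteq E \subseteq \overline{C}$.

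The main obstacle I expect is the propagation step: showing rigorously that the way $\D E$ separates one horizontal or vertical plane determines the way it separates all others, so that no ``bending'' of $\D E$ is possible. Unlike the Euclidean case, where a supporting hyperplane argument finishes things immediately, here one must use the full force of the area-enclosure relation (Lemma~\ref{lem-enclosedarea}) and the parallel/skew line geometry to rule out boundaries that are flat in the horizontal directions but twist in the vertical direction, or that are horizontal planes centered at different points in different regions. Carefully arranging the case analysis --- horizontal direction of the putative normal versus vertical, and handling the boundary cases where a line lies entirely in $\overline{E}$ --- will be where the real work lies; the rest is bookkeeping with the geometry already set up in Section~\ref{sec-prelim}.
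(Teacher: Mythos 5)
Your overall strategy (reduce to the structure of the topological frontier $\D E$, use the parallel/skew line geometry of Lemma \ref{lem-geometry} to force $\D E$ to be a single vertical or horizontal plane, then conclude $C\subset E\subset\ol C$) is aligned with the paper's outline, and your ``propagation'' step is recognizably the paper's Lemma \ref{lem-skeworparallel}. But there is a genuine missing idea at the foundation: you never explain how to produce \emph{open} subsets of $\Int(E)$ from the line condition. In $\R^n$ one cones a point of $E$ over a neighborhood in $E$ and gets an open set; in $\H$ the union of all lines through a point is a horizontal plane, which has empty interior, so this fails. The paper's fix is Proposition \ref{prop-2points}: it uses two-segment broken horizontal paths $\ga(x,v_1,v_2)$, shows the endpoint map $\Ga_x:\De^2\to\H$ is a submersion away from $v_1+v_2=0$ (Lemma \ref{lem-gapsubmersion}), and applies precise monotonicity twice along the two legs to conclude that a full open neighborhood of any interior point of a segment $(p,q)$ lies in $E$. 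Every later step --- that a line meeting $\D E$ twice lies in $\D E$, that $\D E$ is a union of lines, and the final identification of $E$ with a half-space --- rests on this. Your proposal, which works only with single lines through a point and with how $\D E$ ``separates'' horizontal planes, has no substitute for it.

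Two further gaps. First, you tacitly assume $\D E$ is a surface (you speak of tangent hyperplanes and an ``integration'' argument), but a priori a precisely monotone set could have $E$ and $E^c$ both dense, so that $\D E=\H$ and neither set has interior; ruling this out is a separate argument (the paper's Lemma \ref{lem-noth}, which builds a relatively open piece of a ruled surface inside $E$ or $E^c$ from the family of lines meeting two fixed skew lines, and then applies Proposition \ref{prop-2points}). Second, the regularity needed for a tangent-plane/normal-direction argument is not available for an arbitrary closed set; the paper sidesteps this entirely by pure incidence geometry: once $\D E$ is known to be a union of lines any two of which either share a projection or intersect, either all its lines have a common projection (vertical plane) or all pass through one point (horizontal plane). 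I would recommend replacing your differential-geometric endgame with that incidence argument, and in any case you must supply the broken-path submersion step before anything else goes through.
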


For the remainder of this section, we fix a precisely
monotone subset $E\subset \H$, and let $E^c=\H\setminus E$
be its complement.   Note that a subset of $\H$ is
precisely monotone if and only if its complement is precisely monotone,
so the roles of $E$ and $E^c$ will be symmetric throughout.

The proof will proceed in the following steps:

\begin{enumerate}
\setlength{\itemsep}{.75ex}%
\setlength{\parskip}{.75ex}%
\item Lemma \ref{lem-morethanonepoint}: If  $L\in \L(\H)$ 
contains more than one
point of $\D E$, then $L\subset \D E$.
\item Lemma \ref{lem-bdyeunion}: $\D E$ is a union of lines.
\item Lemma \ref{lem-inplaneorh}: Either $\D E$ is contained 
in a plane, or 
$\D E=\H$.
\item Lemma \ref{lem-noth}: The case $\D E=\H$ does not occur.
\item Lemma \ref{lem-deplane}: If $\D E$ is nonempty, then it is a plane
and $C\subset E\subset\bar C$, where $C$ is a connected
component of $\H\setminus \D E$.
\end{enumerate}

We now proceed with the steps of the proof.

\bigskip
 If 
a monotone (or more generally convex) subset $Y\subset \R^n$ contains a subset
$\Si$ and a point $p$, then it also contains the cone
over $\Si$ with vertex at $p$.  We begin with an analogous
statement in the Heisenberg group; it is  more
subtle than the Euclidean case, due to the fact that 
the lines in $\H$ passing through a point $x\in \H$
lie in a horizontal plane, which has empty interior.
To implement the argument, we will use piecewise horizontal
curves.  

\begin{definition}
\label{def-gammas}
For  $x\in \H$, $v_1,\,v_2\in \De$, let 
$\ga(x,v_1,v_2)$ be the unit speed path which starts
at $x$, moves along a horizontal curve in the direction
$v_1$ a distance $|v_1|$, and then along a horizontal
curve in the direction $v_2$ a distance $|v_2|$.  Thus
$\ga(x,v_1,v_2)$  is a broken horizontal line with vertices
$x$, $x\exp(v_1)$, and $x\exp(v_1)\exp(v_2)$.
Using this, we may define
a map $\Ga:\H\times\De^2\ra \H$ by letting
$\Ga(x,v_1,v_2)$ be the other endpoint of $\ga(x,v_1,v_2)$,
i.e. $\Ga(x,v_1,v_2)=\ga(x,v_1,v_2)(|v_1|+|v_2|)$.
For $x\in\H$, we
define $\Ga_x:\De^2\ra \H$ by $\Ga_x(v_1,v_2)=\Ga(x,v_1,v_2)$.
\end{definition}

\begin{lemma}
\label{lem-gapsubmersion}
The map $\Ga$ is smooth.  For all $x$, the map $\Ga_x$
is a submersion near any pair $(v_1,v_2)\in \De^2$
with $v_1+v_2\neq 0$ (recall that we are viewing
$\De$ as a subspace of the Lie algebra of $\H$).
\end{lemma}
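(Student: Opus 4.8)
The plan is to identify $\Ga$ with an explicit polynomial map and then read off the rank of its differential from a Baker--Campbell--Hausdorff computation. First I would observe that the broken horizontal line $\ga(x,v_1,v_2)$ reaches $x\exp(v_1)$ after arclength $|v_1|$ and then $x\exp(v_1)\exp(v_2)$ after a further arclength $|v_2|$, so that its far endpoint is
\[
\Ga(x,v_1,v_2)=x\,\exp(v_1)\,\exp(v_2)\,.
\]
This exhibits $\Ga$ as a composite of the exponential map $\De\ra\H$ with the multiplication $\H\times\H\ra\H$, both smooth, so $\Ga$ is smooth; equivalently, in exponential coordinates it is polynomial in $(x,v_1,v_2)$.

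For the submersion assertion, the first step is to factor $\Ga_x=\ell_x\circ\Psi$, where $\Psi(v_1,v_2)=\exp(v_1)\exp(v_2)$. Since $\ell_x$ is a diffeomorphism, $d(\Ga_x)_{(v_1,v_2)}$ and $d\Psi_{(v_1,v_2)}$ have the same rank, so it suffices to treat $\Psi$. Writing $v_i=a_iX+b_iY$ and using that $\H$ is $2$-step nilpotent, so that the Baker--Campbell--Hausdorff series terminates, one gets
\[
\exp(v_1)\exp(v_2)=\exp\!\Bigl((a_1+a_2)X+(b_1+b_2)Y+\tfrac12(a_1b_2-a_2b_1)Z\Bigr)\,.
\]
Thus in exponential coordinates $(a,b,c)\leftrightarrow\exp(aX+bY+cZ)$ on $\H$, the map $\Psi:\R^4\ra\R^3$ is the explicit polynomial map $(a_1,b_1,a_2,b_2)\mapsto\bigl(a_1+a_2,\ b_1+b_2,\ \tfrac12(a_1b_2-a_2b_1)\bigr)$.

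Next I would compute the Jacobian of $\Psi$ with respect to $(a_1,b_1,a_2,b_2)$, namely the $3\times4$ matrix whose rows are $(1,0,1,0)$, $(0,1,0,1)$ and $(\tfrac12 b_2,-\tfrac12 a_2,-\tfrac12 b_1,\tfrac12 a_1)$. Its minor on the first, second and third columns equals $-\tfrac12(b_1+b_2)$, and its minor on the first, second and fourth columns equals $\tfrac12(a_1+a_2)$. Since $(a_1+a_2,b_1+b_2)$ is precisely the coordinate vector of $v_1+v_2\in\De$, the two minors cannot both vanish when $v_1+v_2\neq0$, so the Jacobian has rank $3$ there; and rank-$3$ is an open condition, hence $\Ga_x$ is a submersion in a neighborhood of any such $(v_1,v_2)$. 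For completeness, when $v_1+v_2=0$ the first and third columns of the matrix agree, the second and fourth agree, and the rank is exactly $2$, so the hypothesis $v_1+v_2\neq0$ cannot be dropped --- consistent with the fact that $\Ga_x(v,-v)\equiv x$.

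There is no real obstacle here. Once the endpoint is recognized as $x\exp(v_1)\exp(v_2)$, the Baker--Campbell--Hausdorff identity collapses everything to a polynomial map whose Jacobian is immediate; the only care needed is bookkeeping the inclusion $\De\subset L(\H)$ and the exponential coordinates, and noticing that the obstruction to full rank, $(a_1+a_2,b_1+b_2)$, is exactly $v_1+v_2$ written out, which is what makes the stated hypothesis the natural one. If a coordinate-free argument were preferred, one could instead note that $\im d(\Ga_x)_{(v_1,v_2)}$ at $p=\Ga_x(v_1,v_2)$ contains the tangent planes to the two horizontal planes swept out by varying $v_2$ alone and by varying $v_1$ alone, and these are two $2$-planes in the $3$-dimensional $T_p\H$ that coincide exactly when $v_1+v_2=0$; but the coordinate computation is the shortest route.
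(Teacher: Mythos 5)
Your proof is correct, and it takes a more computational route than the paper's. The paper argues geometrically: it first notes that $\pi\circ\Ga_x$ is the affine map $(v_1,v_2)\mapsto \pi(x)+\pi(v_1)+\pi(v_2)$, hence submersive onto $\R^2$, and then produces a vertical tangent vector in the image of $D\Ga_x$ by differentiating the path $t\mapsto(v_1+tw,v_2-tw)$ with $w$ orthogonal to $v_1+v_2$ and invoking the enclosed-area formula (\ref{eqn-heightequalsenclosedarea}) --- the swept area varies to first order precisely because $v_1+v_2\neq 0$. You instead exploit that the endpoint is $x\exp(v_1)\exp(v_2)$ (which the paper's Definition \ref{def-gammas} already records), collapse everything via Baker--Campbell--Hausdorff in the $2$-step nilpotent setting, and check two $3\times 3$ minors of the resulting polynomial map; your minors $-\tfrac12(b_1+b_2)$ and $\tfrac12(a_1+a_2)$ are exactly the coordinates of $v_1+v_2$, which recovers the paper's hypothesis from a different angle. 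What your version buys is explicitness and sharpness: you also verify that the rank drops to $2$ exactly when $v_1+v_2=0$, consistent with $\Ga_x(v,-v)\equiv x$. What the paper's version buys is independence from coordinates and from the closed-form BCH identity, so it ports more directly to settings where the horizontal distribution is described only by its curvature/holonomy; your coordinate-free aside at the end essentially reconstructs that viewpoint. Either argument is complete.
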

\begin{proof}
The smoothness of $\Ga$ is immediate from the smoothness
of the group operation.  

Let $\pi:\H\ra \R^2$ be the abelianization map.
Pick $x\in \H$, $(v_1,v_2)\in \De^2$ such that $v_1+v_2\neq 0$.  
Then $\pi(\Ga_x(v_1,v_2))$
is the point $y\defeq x+\pi(v_1)+\pi(v_2)$.  Define a smooth
path $\eta:\R\ra\De^2$ by $t\mapsto (v_1+tw,v_2-tw)$ where
$w$ is a nonzero vector orthogonal to $v_1+v_2$.  Then 
$\Ga_x\circ\eta$ has a nonzero velocity tangent to the
fiber $\pi^{-1}(y)$ (this follows by using (\ref{eqn-heightequalsenclosedarea})).
  Evidently $D(\pi\circ\Ga_x)(v_1,v_2)$
is onto, which implies that $D\Ga_x(v_1,v_2)$ is onto as well.
\end{proof}

\bigskip
\begin{proposition}
\label{prop-2points}
Suppose $L\in \L(\H)$, $p\in L$, and $\Si\subset E$ is
a surface
intersecting $L$ transversely at a point $q\in L\setminus \{p\}$.
Then:
\begin{enumerate}
\setlength{\itemsep}{.75ex}%
\setlength{\parskip}{.75ex}%
\item If $p\not\in \Int(E^c)$, then  the open segment
$(p,q)\subset L$  is contained in $\Int(E)$.
\item If $p\in E^c$, then the connected component  of $L\setminus\{q\}$
not containing $p$ lies in $\Int(E)$.  
\end{enumerate}
The same statements hold with the roles of $E$ and $E^c$ exchanged.
\end{proposition}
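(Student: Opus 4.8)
The plan is to exploit the "cone-filling" phenomenon in $\H$: given a surface $\Si\subset E$ meeting the line $L$ transversely at $q$, one should be able to reach the segment $(p,q)$ (or the appropriate ray of $L\setminus\{q\}$) by broken horizontal paths of the form $\ga(x,v_1,v_2)$ of Definition~\ref{def-gammas} whose first leg runs inside $\Si$ near $q$ and whose image sweeps out an open set. Concretely, I would first choose a small smooth embedded disk $\Si_0\subset\Si$ containing $q$, and consider the map $(s,v_2)\mapsto\Ga(\si(s),0_{\De}^{*},v_2)$ suitably set up so that the first leg is a short horizontal segment inside $\Si_0$ and the second leg is a free horizontal direction. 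Since $\Si_0$ is transverse to $L$ at $q$, and since $\Ga_x$ is a submersion wherever $v_1+v_2\neq 0$ by Lemma~\ref{lem-gapsubmersion}, a dimension count (two parameters from $\Si_0$, two from $v_2\in\De$, mapping into the three-dimensional $\H$) shows that the image of this family contains an open neighborhood in $\H$ of any chosen interior point $m\in(p,q)$. The key point is that all these broken horizontal paths have their initial point on $\Si_0\subset E$ and pass through points arbitrarily close to $m$; I will arrange the endpoint to be $m$ itself.

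**Using precise monotonicity.** Having produced, for a target point $m\in(p,q)$, a broken horizontal line $\ga=\ga(x,v_1,v_2)$ with $x\in\Si\subset E$ and $\ga(|v_1|+|v_2|)=m$, I apply precise monotonicity twice, once to each of the two lines carrying the legs of $\ga$. On the line $L'$ through $x$ and the middle vertex $x\exp(v_1)$: since $x\in E$, if the vertex $x\exp(v_1)$ also lies in $E$ we are done for that leg; the subtlety is that $E\cap L'$ is connected but need not be closed, so I instead want the vertex to lie in $\Int(E)$, which forces the whole half-open segment into $E$. This is where hypothesis (1), $p\notin\Int(E^c)$, or (2), $p\in E^c$, enters: it pins down on which side of $q$ the line $L$ lies in $\Int(E)$, and by a limiting/openness argument one upgrades "lies in $E$" to "lies in $\Int(E)$" for a slightly shorter segment. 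I would run the same argument on the second leg's line to conclude $m\in\Int(E)$, first for $m$ in a dense subset of $(p,q)$ and then, since $\Int(E)$ is open and $(p,q)$ connected, for all of $(p,q)$ in case (1), or for the full ray in case (2).

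**Handling the two cases and the $E\leftrightarrow E^c$ symmetry.** For case (1), the hypothesis $p\notin\Int(E^c)$ means $p$ is a limit of points of $E$, so applying precise monotonicity on $L$ to $p$, $q$ and an interior point of $(p,q)$ shows the open segment cannot meet $\Int(E^c)$; combined with the cone-filling argument above (which gives points of $\Int(E)$ arbitrarily near every $m\in(p,q)$), precise monotonicity of $E^c$ along lines through those points forces $(p,q)\subset\Int(E)$. For case (2), $p\in E^c$ and $q\in\Int(E)$ (as $\Si$ is transverse at $q$ and $\Si\subset E$, the intersection $E\cap L$ is connected and contains a neighborhood of $q$ in $L$... more carefully: $q$ lies in the relative interior of $E\cap L$), so the connected set $E\cap L$ is disjoint from $p$ and lies entirely on one side of $q$; the cone-filling argument then propagates interior points of $E$ arbitrarily far along that side, and precise monotonicity of $E^c$ closes it up. Finally, since "$E$ precisely monotone" $\iff$ "$E^c$ precisely monotone" and the statements (1),(2) are symmetric under $E\leftrightarrow E^c$ once one also swaps $\Int(E^c)\leftrightarrow\Int(E)$ appropriately, the last sentence of the proposition is immediate. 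The main obstacle I anticipate is the bookkeeping in the submersion/openness step: verifying that the chosen four-parameter family of broken horizontal paths genuinely has open image containing the prescribed endpoint $m$, and carefully tracking the distinction between lying in $E$, in $\ol{E}$, and in $\Int(E)$ so that precise monotonicity can actually be invoked (it speaks of connectedness of $E\cap L$, so one must be attentive near the relative boundary of $E\cap L$).
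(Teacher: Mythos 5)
There is a genuine gap at the heart of your argument: you never explain how precise monotonicity actually forces the endpoint of a leg of a broken horizontal path into $E$. Precise monotonicity says only that $E\cap L'$ and $E^c\cap L'$ are connected for every $L'\in\L(\H)$; knowing that \emph{one} point of $L'$ (the initial point of a leg, lying on $\Si_0\subset E$) belongs to $E$ gives no information about any other point of $L'$. Your sentence ``if the vertex $x\exp(v_1)$ also lies in $E$ we are done for that leg'' begs exactly the question at issue, and the proposed fix (``I instead want the vertex to lie in $\Int(E)$'') is circular. The mechanism the paper uses, and which is absent from your proposal, is to \emph{extend} each leg so that the extension crosses the surface $\Si$ transversely at a point of $E$ lying \emph{beyond} the leg's endpoint: the endpoint is then sandwiched between two points of $E\cap L'$, and connectedness applies. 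For this to work the broken paths must start at points $x\in E$ near $p$ --- this is where the hypothesis $p\notin\Int(E^c)$ enters (it supplies such starting points; it does not ``pin down a side'') --- with both legs nearly parallel to $L$ and aimed past $q$, so that their extensions stay close to the segment of $L$ running from $p$ through $q$ toward a point $y$ beyond $q$, and are therefore forced to cross $\Si$. For part (2) one starts the path at $p\in E^c$ itself and uses connectedness of $E^c\cap L'$ instead: any point of $L'$ separated from $p$ by the crossing point $w\in\Si\subset E$ must lie in $E$, since otherwise $w$ would be trapped between two points of $E^c\cap L'$.

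Two further problems. First, the family ``$(s,v_2)\mapsto\Ga(\si(s),0_{\De}^{*},v_2)$ with the first leg a short horizontal segment inside $\Si_0$'' is not available: a generic surface in $\H$ contains no horizontal line segments (its intersection with the distribution $\De$ is a line field whose integral curves are not elements of $\L(\H)$), and if instead the first leg is trivial you are reduced to single segments with only one known point in $E$, which, as above, yields nothing. Second, the claim that $q\in\Int(E)$, or that $q$ lies in the relative interior of $E\cap L$, ``as $\Si$ is transverse at $q$ and $\Si\subset E$,'' is false as stated: transversality gives $\Si\cap L=\{q\}$ near $q$, hence only the single point $q$ of $E\cap L$ is known a priori.
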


\begin{remark}
The first assertion still holds if one merely assumes that
$E$ is {\bf precisely convex}, i.e. its intersection with
any $L\in\L(\H)$ is connected.
\end{remark}

\begin{proof}
We orient the line $L$ in the direction from $p$ to $q$.
Choose a point $y\in L$ which is separated from $p$ by $q$.
Since $L$ intersects $\Si$ transversely at $q$, any path 
close to the segment $\ol{py}\subset L$  will intersect $\Si$.

Choose $z\in (p,q)$, and define $\bar v\in \De$ by
$z=p\,\exp 2\bar v$.  Thus $z=\Ga_p(\bar v, \bar v)$.

We claim that 
 there is an $\eps>0$
such that if  $x\in E$, $v_1,\,v_2\in \De$
satisfy  $$
\max(d^{\H}(x,p),\|v_1-\bar v\|,\|v_2-\bar v\|)<\eps\,,
$$
then $\Ga_x(v_1,v_2)\in E$.  
To see this,
note that when $\eps$ is sufficiently small,
we may choose $\rho_1\in(1,\infty)$
such that $\ga(x,\rho_1 v_1,0)$ is a segment
ending near $y$, and by precise monotonicity the
subsegment 
$\ga(x, v_1,0)$ lies in $E$.
Similarly,  we
can choose $\rho_2\in (1,\infty)$ such that
$\ga(x,v_1,\rho_2v_2)$ is a path ending
near $y$, so precise monotonicity  implies that 
 $\ga(x,v_1,v_2)=\Ga_x(v_1,v_2)$ is contained in $E$. 

By Lemma
\ref{lem-gapsubmersion}, the map $\Ga_p$
restricts to a submersion on a ball $B\subset \De^2$ centered
at $(\bar v,\bar v)$; by shrinking $B$, we may assume
that it is contained in the set
$\{(v_1,v_2)\in \De^2\mid \|v_i-\bar v\|<\eps\}$. Therefore
 by the implicit function theorem,
if we choose $x\in E\cap B(p,\eps)$
sufficiently close to $p$, then  $\Ga_x$ will map
$B$ onto a neighborhood $U$ of $z$; by the preceding
paragraph we have $U\subset E$.  Since $z$ was an arbitrary
point in $(p,q)$, we have $(p,q)\subset\Int(E)$.

The proof of part (2) is similar, except that one 
considers paths $\ga(p,v_1,v_2)$ which cross $\Si$, and
the component of $\ga(p,v_1,v_2)\setminus \Si$ lying 
on the other side of $\Si$.
\end{proof}

Proposition \ref{prop-2points} implies:
\begin{lemma}
\label{lem-morethanonepoint}
If $L\in \L(\H)$, and $L\cap \D E$ contains more
than one point, then $L\subset \D E$.
\end{lemma}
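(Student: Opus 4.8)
The strategy is to exploit Proposition \ref{prop-2points} by feeding it a suitable surface $\Si$. Suppose $L\in\L(\H)$ and $p_1,p_2\in L\cap\D E$ are distinct. I would first argue that it suffices to show both $L\subset\ol{E}$ and $L\subset\ol{E^c}$, since then every point of $L$ is in $\D E=\ol E\cap\ol{E^c}$. By the symmetry between $E$ and $E^c$ noted just before the proof of Theorem \ref{thm-preciselymonotone}, it is enough to establish $L\subset\ol E$.

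Here is how I would get that. Since $p_2\in\D E$, every neighborhood of $p_2$ meets $E$; choose a small surface $\Si$ through $p_2$ transverse to $L$ — e.g. a small $2$-disk transverse to $L$ at $p_2$ — and note that because $E$ is dense near $p_2$ we cannot literally take $\Si\subset E$ as Proposition \ref{prop-2points} requires. The fix is to perturb: pick a sequence of points $q_k\in E$ converging to $p_2$ with $q_k\notin L$ but close to $L$, and through each $q_k$ take a small surface $\Si_k$ transverse to $L$ (transverse to the nearby line $L_k$ through $q_k$ parallel to $L$, say), shrinking $\Si_k$ so that $\Si_k\subset E$ — this is possible once $\Si_k$ is small only if $q_k\in\Int(E)$, which need not hold. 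So instead I would apply Proposition \ref{prop-2points} in the cleanest available form: work with the line $L$ itself and the point $p_1$ in the role of ``$p$''. Since $p_2\in\D E\subset\ol{E}$, we can find $q\in E$ arbitrarily close to $p_2$; we may also arrange, by precise monotonicity applied to lines near $L$, that the relevant segments behave well. The essential point is that $p_1\in\D E$ means $p_1\notin\Int(E^c)$, so hypothesis (1) of Proposition \ref{prop-2points} is satisfied at $p_1$. Applying part (1) with $p=p_1$ and a transverse surface in $E$ near $p_2$ shows that the open segment between $p_1$ and the near-$p_2$ crossing point lies in $\Int(E)$; letting $q\to p_2$ gives $(p_1,p_2)\subset\ol E$, hence the subsegment $[p_1,p_2]\subset\ol E$. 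Then part (2), or rather the precise-convexity remark together with a second application at the other end, propagates this to all of $L$: having two points of $\ol E$ on $L$ together with a crossing surface forces the whole line into $\ol E$. Running the symmetric argument with $E^c$ gives $L\subset\ol{E^c}$, so $L\subset\D E$.

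The main obstacle, and the point requiring care, is the transversality/perturbation step: Proposition \ref{prop-2points} is stated for a surface $\Si\subset E$ meeting a line transversely, but a boundary point of $E$ is only a limit of points of $E$, not an interior point, so one must either (i) choose the transverse surface through a nearby interior point of $E$ — which requires knowing $\Int(E)$ accumulates at $\D E$, something that should follow from part (1) of Proposition \ref{prop-2points} itself once we have one interior point and one boundary point on a common line — or (ii) set up a limiting argument with a sequence of surfaces $\Si_k\subset E$ through points $q_k\to p_2$, $q_k\in E$, and pass to the limit in the conclusion $(p_1,q_k)\subset\Int(E)$ to get $(p_1,p_2)\subset\ol E$. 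Option (ii) is the safer route and avoids any circularity. Once the limiting argument is in place, closing up the line is routine: a line containing two points of the closed set $\ol E$ and meeting a surface $\Si\subset E$ transversely has, by Proposition \ref{prop-2points}(2) applied from each of the two points outward, its entire complement-of-$\Si$ portion in $\Int(E)$, hence lies in $\ol E$; symmetry finishes the proof.
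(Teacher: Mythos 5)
There is a genuine gap, and it is located exactly where you sensed trouble: the transverse surface. Proposition \ref{prop-2points} needs a surface $\Si\subset E$ meeting $L$ transversely at a point $q\in L$; in particular the crossing point itself lies in $L\cap E$, and the conclusion then places the whole open segment $(p_1,q)$ in $\Int(E)$. You propose to take such surfaces $\Si_k\subset E$ with crossing points $q_k\to p_2$, but these surfaces need not exist, and in the situation the lemma is describing they \emph{cannot} exist: if $E$ is (say) a half-space and $L$ lies in the bounding plane, then $L\subset\D E$ and no surface contained in $E$ is transverse to $L$ at a point of $L$. Indeed, if your surfaces did exist, your own limiting argument would give not merely $(p_1,p_2)\subset\ol E$ but $(p_1,p_2)\subset\Int(E)$ (the segments $(p_1,q_k)$ exhaust or cover $(p_1,p_2)$), and the symmetric argument would give $(p_1,p_2)\subset\Int(E^c)$ --- a contradiction, since these sets are disjoint. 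So the plan is structurally wrong, not just missing a technical perturbation; your fallback ``apply part (1) with $p=p_1$ and a transverse surface in $E$ near $p_2$'' quietly reassumes the object whose existence is the problem.

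The fix is to run the argument by contradiction, putting the surface at a hypothetical \emph{non}-boundary point of $L$ rather than near a boundary point. Suppose $x\in L\setminus\D E$; then $x\in\Int(E)$ or $x\in\Int(E^c)$, say the former, and now a small disk $\Si\subset\Int(E)$ transverse to $L$ at $x$ genuinely exists. Applying Proposition \ref{prop-2points}(1) with $p=p_1$ (note $p_1\in\D E$ gives $p_1\notin\Int(E^c)$) yields $(p_1,x)\subset\Int(E)$, so $p_2\notin(p_1,x)$; the same with $p=p_2$ gives $p_1\notin(p_2,x)$; hence $x$ lies strictly between $p_1$ and $p_2$. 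Thus every point of $L$ outside $[p_1,p_2]$ lies in $\D E$, and repeating the argument with a new pair of boundary points chosen so that $x$ is no longer between them produces the contradiction. Your reduction to showing $L\subset\ol E$ and $L\subset\ol{E^c}$ is harmless but unnecessary once the argument is set up this way.
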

\begin{proof}
Suppose $p,q\in L\cap \D E$ are distinct points,
and $x\in L\setminus \D E$.  Then either $x$ is in the
interior of $E$, or $E^c$; without loss of generality
we may assume that $x\in \Int(E)$.  Then there is a surface 
$\Si\subset \Int(E)$ intersecting $L$ transversely at
$x$.   By Proposition \ref{prop-2points}, the open segment of $L$
lying between $p$ and $x$ lies in $\Int(E)$; therefore
$x$ lies between $p$ and $q$.  Hence any point 
$y\in L\setminus\{p,q\}$ separated from  $p$ by $q$ belongs to
$\D E$.  Repeating the above reasoning with 
$p$ replaced by some $p'$ between $p$ and $q$ gives
a contradiction.
\end{proof}

\bigskip

\begin{lemma}
\label{lem-bdyeunion}
$\D E$ is the union of the lines
it contains.  
\end{lemma}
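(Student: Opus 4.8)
The goal is to show $\D E$ is a union of lines, i.e.\ that every point $p\in\D E$ lies on some line $L\in\L(\H)$ with $L\subset\D E$. Given Lemma~\ref{lem-morethanonepoint}, it suffices to produce, through each $p\in\D E$, a single line $L$ containing a \emph{second} point of $\D E$; then Lemma~\ref{lem-morethanonepoint} upgrades this to $L\subset\D E$, and the union of all such lines exhausts $\D E$.

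\medskip
\textbf{Producing a second boundary point on a line through $p$.} Fix $p\in\D E$. Suppose for contradiction that every line $L$ through $p$ meets $\D E$ only at $p$. Then each such $L$ is split by $p$ into two open rays, each of which is connected and disjoint from $\D E$, hence lies entirely in $\Int(E)$ or entirely in $\Int(E^c)$. The first case to rule out is that \emph{every} ray from $p$ (in every line through $p$) lies in $\Int(E)$ (or symmetrically, all in $\Int(E^c)$): the lines through $p$ sweep out the horizontal plane $\Pi_p$ centered at $p$, so this would force $\Pi_p\setminus\{p\}\subset\Int(E)$. I would then argue that since $p\in\ol{E^c}$, there are points of $E^c$ arbitrarily close to $p$; picking such a point $x$ and joining it to a point $y\in\Pi_p\setminus\{p\}$ near $p$ by a broken horizontal path of the type $\ga(x,v_1,v_2)$ (using that $\Ga_x$ is a submersion near suitable $(v_1,v_2)$, Lemma~\ref{lem-gapsubmersion}, and that the horizontal plane is locally the image of such maps), one lands inside $\Int(E)$ while starting in $E^c$ — and precise monotonicity applied to the line carrying the final segment forces the whole broken path, in particular $x$, into $E$, a contradiction. (This is essentially the mechanism already used in Proposition~\ref{prop-2points}.)

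\medskip
\textbf{Finishing via the mixed case.} The remaining possibility is that among the rays emanating from $p$, some lie in $\Int(E)$ and some in $\Int(E^c)$. Here I would exploit continuity: the assignment, to each horizontal direction $v\in\De$ (or each unit $v$), of whether the corresponding ray from $p$ goes into $\Int(E)$ or $\Int(E^c)$, is locally constant on the connected set of directions whose rays avoid $\D E$; since we are assuming \emph{all} rays from $p$ avoid $\D E$ except at $p$, this assignment is defined on all of $\De\setminus\{0\}$, which (as a set of directions) is connected, forcing it to be constant — contradicting that both cases occur. So the mixed case is impossible under our contradiction hypothesis, and combined with the previous paragraph we conclude that some line through $p$ must contain a point of $\D E$ other than $p$. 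By Lemma~\ref{lem-morethanonepoint} that line lies in $\D E$, completing the proof.

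\medskip
\textbf{Main obstacle.} The delicate point is the ``all rays on one side'' step: the horizontal plane $\Pi_p$ has empty interior, so ``$\Pi_p\setminus\{p\}\subset\Int(E)$'' does not immediately say anything about a full neighborhood of $p$. The work is in using the broken-horizontal-line maps $\Ga_x$ — and the submersion property — to show that a full open neighborhood of a point $z\in\Pi_p\setminus\{p\}$ is forced into $E$ by points $x\in E^c$ near $p$, exactly as in the proof of Proposition~\ref{prop-2points}, and then deriving the contradiction. Once that mechanism is in place, the connectedness argument handling the mixed case is routine.
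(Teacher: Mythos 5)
Your reduction is the same as the paper's: it suffices to produce, through each $p\in\D E$, a line meeting $\D E$ in a second point, and then apply Lemma \ref{lem-morethanonepoint}. Your treatment of the ``mixed'' case is also fine (the paper gets it in one line: $P\setminus\{p\}$ is connected and disjoint from $\D E$, hence lies entirely in $\Int(E)$ or $\Int(E^c)$, so no case analysis over directions is needed). The problem is the remaining case, which is the entire content of the lemma: $P\setminus\{p\}\subset\Int(E)$ while $p\in\ol{E^c}$.

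Here your mechanism --- run a broken horizontal path $\ga(x,v_1,v_2)$ from $x\in E^c$ to a point of $\Int(E)$, and let ``precise monotonicity applied to the line carrying the final segment'' force $x$ into $E$ --- does not work. Precise monotonicity only says that each line meets $E$ and $E^c$ in intervals; knowing that the terminal endpoint of the second segment lies in $E$ says nothing about the other endpoint of that segment, and $x$ is not even on that line. Membership in $E$ does not propagate backwards along a broken path: if $E$ were a half-space with $x$ just outside it, a two-segment horizontal path from $x$ into $\Int(E)$ crosses $\D E$ once with no violation of monotonicity. (Proposition \ref{prop-2points} runs in the opposite direction: it starts from $x\in E$ plus a transverse surface $\Si\subset E$ beyond $q$, so each line of the broken path contains points of $E$ on both sides of the relevant subsegment.) What is needed is a way to turn the isolated points $x_k\in E^c$, $x_k\to p$, into sets of definite size in $E^c$. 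The paper does this by noting that for any line $L_k$ through $x_k$, the sets $L_k\cap E$ and $L_k\cap E^c$ are complementary intervals, so $L_k\cap E^c$ contains an entire ray $\eta_k$ through $x_k$; these rays accumulate on a point of $P\setminus\{p\}$, which is then in $\ol{E^c}$, contradicting $P\setminus\{p\}\subset\Int(E)$. (A repair closer in spirit to yours: for $x\in E^c$ sufficiently close to $p$ and any unit $v\in\De$, the points $x\exp(\pm v)$ lie in a small neighborhood of the compact circle $\{p\exp w:\ \|w\|=1\}\subset P\setminus\{p\}\subset\Int(E)$, hence in $E$; then $L_{x,v}\cap E$ has points on both sides of $x$ and is an interval, forcing $x\in E$ --- a single line through $x$, not a broken path, does the work.)
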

\begin{proof}

Pick $x\in \D E$.  

Suppose every line $L\in \L(\H)$
which passes through $x$ intersects $\D E$ only at $x$.
Then the union of the lines passing through $x$ is 
a horizontal plane $P$, and $(P\setminus \{x\})\cap \D E=\emptyset$.
Since $P\setminus\{x\}$ is connected, it follows that
either $P\setminus \{x\}$ is entirely contained in $\Int(E)$
or $\Int(E^c)$; without loss of generality we assume
the former.  Because $x\in \D E$, there
is a sequence $\{x_k\}\subset E^c$ which
converges to $x$.  For each $k$, choose a line $L_k$
passing through $x_k$, and (by precise monotonicity)
a ray $\eta_k\subset L_k\cap E^c$
containing $x_k$.  Then the sequence $\{\eta_k\}$ will
accumulate on some point in $P\setminus \{x\}$ contradicting the fact
that $P\setminus \{x\}\subset \Int(E)$.

\end{proof}

\bigskip
Our next goal is:
\begin{lemma}
\label{lem-skeworparallel}
Suppose $\g\subset\H$ has the property that if $L\in \L(\H)$
and $L\cap \g$ contains more than one point, then $L\subset \g$.
If $\g$ contains either a pair of skew lines or a pair
of parallel lines  with distinct projection, then $\g=\H$.
\end{lemma}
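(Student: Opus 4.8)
The plan is to exploit the special ``ruling'' property of $\g$ — namely that any line meeting $\g$ in two points lies entirely in $\g$ — together with the concrete geometry of pairs of lines in $\H$ described in Lemma \ref{lem-geometry}. I would split into the two cases according to whether the two given lines $L_1, L_2 \subset \g$ are parallel (with distinct projection) or skew.

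First, suppose $L_1, L_2 \subset \g$ are parallel with $\pi(L_1) \ne \pi(L_2)$. By Lemma \ref{lem-geometry}(\ref{item-parallel}), there is a fiber $\pi^{-1}(x)$ lying halfway between $\pi(L_1)$ and $\pi(L_2)$ such that every point of $\pi^{-1}(x)$ lies on a line meeting both $L_1$ and $L_2$. Each such connecting line meets $\g$ in (at least) two points, hence lies in $\g$; therefore the whole fiber $\pi^{-1}(x)$ is contained in $\g$. Now $\pi^{-1}(x)$ is a coset of the center and contains two points on a common line only trivially, so I cannot immediately re-apply the ruling property to the fiber itself. Instead I would propagate: pick two distinct points $y_1, y_2 \in \pi^{-1}(x) \subset \g$, and for each of them run the parallel-lines argument again, now pairing $L_1$ (say) with a suitable new line through $y_i$ to sweep out further lines and further fibers. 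More efficiently: once one full fiber $\pi^{-1}(x) \subset \g$ and one line $L_1 \subset \g$ are known, for every point $p \in \pi^{-1}(x)$ and every line $L'$ through $p$ that meets $L_1$, the line $L'$ lies in $\g$; varying $p$ over the fiber and $L'$ over lines through $p$ meeting $L_1$ produces lines whose projections fill out a whole half-plane of directions, and iterating (each new line can be paired with $L_1$, or with another line already in $\g$, to capture its ``halfway'' fiber) fills in all of $\H$. The bookkeeping here is the part I expect to be fussy: one must check that the lines produced at each stage really do accumulate to cover every point of $\H$, using that through every point there pass lines in all horizontal directions and that the ``halfway fiber'' construction reaches every fiber.

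Second, suppose $L_1, L_2 \subset \g$ are skew. By Lemma \ref{lem-geometry}(\ref{item-skew}), there is a hyperbola $Y \subset \R^2$ with asymptotes $\pi(L_1), \pi(L_2)$ such that every tangent line of $Y$ lifts (uniquely) to a line meeting both $L_1$ and $L_2$; each such lift meets $\g$ in two points and hence lies in $\g$. So $\g$ contains a one-parameter family of lines $\{M_s\}$ whose projections are the tangent lines to $Y$. Among these projected tangent lines one can find pairs that are either parallel with distinct projection or again skew (tangent lines to a hyperbola take all slopes in an open interval, and two distinct tangents to a strictly convex curve are never equal and are parallel only for antipodal tangency, which does not occur on a single branch — but one can instead simply pick two tangents with the same slope on opposite branches, or two with different slopes, and check they are non-coplanar in $\H$). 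This reduces the skew case to the parallel case (or feeds back into itself), and then the previous paragraph finishes the argument, giving $\g = \H$.

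The main obstacle is the propagation/covering step in the parallel case: turning ``$\g$ contains a rich but still lower-dimensional family of lines and one full central fiber'' into ``$\g = \H$''. The clean way to organize it is to first show $\g$ contains all lines in one fixed horizontal direction through one fixed fiber, then all of that fiber's translates, then — pairing lines of that direction with lines of a second direction via the skew case of Lemma \ref{lem-geometry}(\ref{item-skew}) — lines in a second direction, and to observe that two transverse foliations of $\H$ by lines together sweep out everything. Everything else is a direct quotation of Lemmas \ref{lem-enclosedarea} and \ref{lem-geometry}.
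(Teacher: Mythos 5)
Your opening moves are sound: the reduction between the two cases via Lemma \ref{lem-geometry}, and the observation that in the parallel case every connecting line meets $\g$ twice and hence lies in $\g$, so the whole halfway fiber $\pi^{-1}(x)$ is contained in $\g$. (The paper uses exactly this fiber-capturing device, though packaged as a ``Claim'' inside the skew case: if $L_1,L_2\subset\g$ are skew, one finds two \emph{parallel} tangent lines of the hyperbola, lifts them into $\g$, and applies Lemma \ref{lem-geometry}(\ref{item-parallel}) to conclude $\pi^{-1}(\pi(L_1)\cap\pi(L_2))\subset\g$.) The genuine gap is the propagation step you yourself flag as ``fussy''. After one round you have a single fiber $F=\pi^{-1}(x)$ and the lines $L_1,L_2$ inside $\g$. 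For a fixed $p\in F$, the set of lines through $p$ meeting $L_1$ is only a discrete set (it is cut out by one area equation along $L_1$), so sweeping $p$ over the one-dimensional fiber produces only a two-dimensional union of lines. At no finite stage of your iteration do you exhibit an \emph{open} subset of $\H$ contained in $\g$, and without an open set the closing argument (``every point lies on a line meeting $\g$ in more than one point'') has nothing to bite on. As written, the covering claim is asserted, not proved.

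The paper closes this gap by running the reduction in the opposite direction (parallel $\Rightarrow$ skew) and then exploiting the hyperbola globally rather than pointwise. Let $V$ be the open region of $\R^2$ bounded by the two branches of $Y$, minus the asymptotes. Every $v\in V$ is the crossing point of \emph{two} tangent lines of $Y$; their horizontal lifts are skew lines lying in $\g$ (each meets both $L_1$ and $L_2$), so the fiber claim applied to this new skew pair gives $\pi^{-1}(v)\subset\g$. Hence the open set $\pi^{-1}(V)$ lies in $\g$, and then any $x\in\H$ sits on a line passing through $\pi^{-1}(V)$, which meets $\g$ in an open interval and therefore lies in $\g$. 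If you want to keep your architecture, you need an analogue of this step: some mechanism that converts your two-dimensional families of lines into a full fiber over \emph{every} point of an open planar region at once, rather than one fiber per iteration.
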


\begin{proof}
Observe that if $\g$ contains a pair of parallel lines with distinct
projections, then by part A of Lemma \ref{lem-geometry} and the hypothesis
on $\g$, there will be a pair of skew lines contained in $\g$; therefore
we may assume that $\g$ contains a pair of skew lines.

We first claim that if $L_1$ and $L_2$ are skew lines contained in 
$\g$, then  the fiber 
$$
\pi^{-1}(\pi(L_1)\cap \pi(L_2))
$$ 
is contained in $\g$.

By Lemma \ref{lem-geometry}, there is a hyperbola $Y\subset\R^2$ with
asymptotes $\pi(L_1)$ and $\pi(L_2)$ such that every tangent line of $Y$
has a unique lift $L\in\L(\H)$ which intersects both $L_1$ and $L_2$, and which is
therefore contained in $\g$.  Thus we can
find a pair of parallel lines $L_3,\,L_4\in \L(\H)$ which intersect both
$L_1$ and $L_2$, such that $\pi(L_3)$ and $\pi(L_4)$ are distinct tangent 
lines of the hyperbola $Y$.  

By Lemma \ref{lem-geometry}
the collection $\C$ of lines  which intersect both $L_3$ and 
$L_4$ contains $L_1$ and $L_2$, and their union contains 
$$
\pi^{-1}(\pi(L_1)\cap \pi(L_2)).
$$
Since 
$\cup_{L\in \C}L\subset \g$, the claim is established.

Note that the hyperbola $Y$ separates $\R^2$ into three
connected components, and let $U$ be the one whose closure 
contains $Y$. Let $V\defeq U\setminus\{L_1\cup L_2\}$. 
Every point in $V$
is the intersection point 
of two tangent lines of $Y$, and the corresponding lifts
will be skew lines contained in $\g$.  Therefore by the claim,
we have $\pi^{-1}(V)\subset \g$.

Now if $x\in \H$, there is an $L\in \L(\H)$ containing $x$ which
passes through $\pi^{-1}(V)$, and such an $L$ will intersect $\g$
in more than one point, forcing $x\in \g$.  Thus  $\g=H$.
\end{proof}

Using this lemma, we get:
\begin{lemma}
\label{lem-inplaneorh}
Either $\D E$ is contained in a plane, or $\D E=\H$.
\end{lemma}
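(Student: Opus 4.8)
The plan is to combine Lemma \ref{lem-bdyeunion} (that $\D E$ is a union of lines) with Lemma \ref{lem-skeworparallel} (that a line-union closed under the ``two-point'' property containing two skew lines, or two parallel lines with distinct projection, must be all of $\H$). Note that $\D E$ does satisfy the two-point property required in Lemma \ref{lem-skeworparallel}: this is exactly Lemma \ref{lem-morethanonepoint}. So if $\D E$ contains any two skew lines, or any two parallel lines with distinct projections, then $\D E = \H$ and we are done. Hence I may assume from now on that $\D E$ contains no such configurations.

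Under this assumption, I want to show $\D E$ lies in a plane. Consider the set $\pi(\D E) \subset \R^2$, which by Lemma \ref{lem-bdyeunion} is a union of straight lines in $\R^2$ (the projections of the lines comprising $\D E$). The assumption rules out two skew lines in $\D E$; it also rules out two parallel lines with distinct projections. So any two lines $L_1, L_2 \subset \D E$ are either parallel with $\pi(L_1) = \pi(L_2)$, or they intersect in $\R^2$ (i.e., $\pi(L_1) \cap \pi(L_2) \ne \emptyset$ and they are not parallel — but then, since they are not skew, they actually meet in $\H$). I would now split into cases according to how many distinct projected lines occur in $\pi(\D E)$.

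\emph{Case 1: all lines of $\D E$ have the same projection $\bar L \subset \R^2$.} Then $\D E \subset \pi^{-1}(\bar L)$, which is a vertical plane, and we are done. \emph{Case 2: there are two lines in $\D E$ with distinct, non-parallel (hence intersecting) projections.} Pick $L_1, L_2 \subset \D E$ with $\pi(L_1), \pi(L_2)$ distinct and meeting at a point $\bar x \in \R^2$. Since $L_1, L_2$ are not skew, they meet at a point $x \in \pi^{-1}(\bar x)$. Now take any third line $L \subset \D E$. Its projection $\pi(L)$ cannot be parallel to both $\pi(L_1)$ and $\pi(L_2)$ with distinct projection (that would give a forbidden parallel pair, unless $\pi(L)$ equals one of them — but if $\pi(L) = \pi(L_i)$ and $L \ne L_i$ we again get a forbidden parallel pair). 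So $\pi(L)$ meets $\pi(L_1)$ and $\pi(L_2)$, and since $L$ is skew to neither, $L$ meets both $L_1$ and $L_2$ in $\H$. If $L$ met $L_1$ and $L_2$ at two different points of $\H$, then $L$ together with (a suitable one of) $L_1, L_2$ would force, via the two-point property applied around the configuration, a contradiction with the skew/parallel exclusion — more directly: two lines through distinct points, one on $L_1$ and one on $L_2$, pin down that $L$ passes through $x$. Indeed the only line meeting both $L_1$ and $L_2$ and not skew to either, given that $L_1 \cap L_2 = \{x\}$, must pass through $x$ (any line disjoint from $\{x\}$ but projecting through $\bar x$ meets the horizontal plane configuration at $x$ only in the degenerate sense; one checks using Lemma \ref{lem-geometry}(B) that skewness is forced otherwise). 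Thus every line of $\D E$ passes through $x$, so $\D E$ is contained in the horizontal plane centered at $x$, and we are done.

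The main obstacle is the bookkeeping in Case 2: showing that every line of $\D E$ actually passes through the common point $x$, rather than merely meeting $L_1$ and $L_2$ somewhere. The key tool is Lemma \ref{lem-geometry}(B): if a line $L \subset \D E$ met $L_1$ and $L_2$ at distinct points of $\H$ while avoiding $x$, one produces (using that $L$ is not skew to $L_1$ or $L_2$, and the hyperbola description) either a pair of skew lines or a pair of parallel lines with distinct projections inside $\D E$, contradicting our standing assumption. I expect this to require one careful sub-argument invoking Lemma \ref{lem-geometry}(B) applied to $L$ and one of $L_1, L_2$, together with Lemma \ref{lem-morethanonepoint} to promote coincident points into containment. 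Once that is settled, the conclusion that $\D E$ lies in a horizontal plane (Case 2) or a vertical plane (Case 1) is immediate.
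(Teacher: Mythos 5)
Your proposal follows the paper's argument essentially step for step: use Lemmas \ref{lem-morethanonepoint}, \ref{lem-bdyeunion}, and \ref{lem-skeworparallel} to reduce to the case where $\D E$ contains no skew pair and no parallel pair with distinct projections, then conclude that $\D E$ lies in a vertical plane (all lines share one projection) or a horizontal plane (all lines concurrent). One small correction in your Case 2: a line $L\neq L_1$ with $\pi(L)=\pi(L_1)$ is \emph{not} a forbidden parallel pair (Lemma \ref{lem-skeworparallel} requires distinct projections), but the case is excluded anyway, since such an $L$ is disjoint from and non-parallel to $L_2$, hence skew to it. The sub-argument you flag as the main obstacle --- that a third line meeting $L_1$ and $L_2$ must pass through $x=L_1\cap L_2$ --- is exactly the paper's (unproved) assertion that pairwise-intersecting lines are concurrent; it does not need Lemma \ref{lem-geometry}(B), only the observation that a line not contained in the horizontal plane $P_x$ meets $P_x$ in at most one point, so a line meeting $L_1,L_2\subset P_x$ in two distinct points lies in $P_x$ and therefore passes through its center $x$.
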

\begin{proof}
Assume that $\D E\neq \H$, but that $\D E$ is not
contained in a plane.  

By Lemma \ref{lem-bdyeunion},
we know that $\D E$ is a union of lines; since $\D E$
is not contained in a plane, it must therefore contain
at least two lines $L_1,\,L_2$.  

  If  $L_1,L_2$ have parallel projection, then $\pi(L_1)=\pi(L_2)$;
otherwise by Lemma \ref{lem-skeworparallel} we would
contradict our assumption that $\D E\neq \H$.
Furthermore,  any third line $L\subset \D E$  must also have 
the same projection,
since otherwise $\D E$ would contain a pair of skew lines or a pair
a parallel lines with distinct projection.  So in this case $\D E$ 
is contained in a vertical plane.

Therefore we may assume that $\D E$ does not contain
distinct lines with parallel projection.
Hence every 
pair of lines contained in $\D E$ must intersect.  Since any
triple of lines which intersect pairwise must have a common
intersection point, it follows
that all lines contained in $\D E$ pass through a single
point $x\in \H$, and hence $\D E$ is contained in a horizontal 
plane.

\end{proof}

\begin{lemma}
\label{lem-noth}
Either $E$ or $E^c$ has nonempty interior; equivalently,
$\D E\neq \H$.
\end{lemma}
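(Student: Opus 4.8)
Suppose for contradiction that $\D E=\H$. Since the interior of a set is disjoint from its frontier, this forces $\Int(E)=\Int(E^c)=\emptyset$, and it forces $E\neq\emptyset$ and $E^c\neq\emptyset$ (otherwise $\D E=\emptyset$). The plan is to show that under these hypotheses $E$ or $E^c$ contains a genuine $2$-dimensional surface, which contradicts $\Int(E)=\Int(E^c)=\emptyset$ via Proposition \ref{prop-2points}.

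First, a consequence of precise monotonicity: for every $y\in E$ and every $L\in\L(\H)$ with $y\in L$, the intersection $E\cap L$ is a ray (whose endpoint need not be $y$) or all of $L$. Indeed $E\cap L$ is connected and contains $y$; if it were $\{y\}$, or more generally a bounded interval with nonempty interior, then $E^c\cap L$ would be disconnected, contradicting precise monotonicity of $E^c$ (a set is precisely monotone if and only if its complement is). In particular, through each point of $E$ there is a horizontal ray of $E$ in every horizontal direction, and likewise for $E^c$.

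Fix $y\in E$ and choose $v,w\in\De\setminus\{0\}$ with $w\neq\pm v$. By the previous paragraph $E$ contains the ray $\rho_1$ from $y$ in direction $v$, lying on a line $L_1$; pick $y'\in\rho_1\setminus\{y\}$, let $\rho_2\subset E$ be the ray from $y'$ in direction $w$, pick $y''\in\rho_2\setminus\{y'\}$, and let $L_3$ be the line through $y''$ in direction $v$. Then $L_1$ and $L_3$ are parallel with $\pi(L_1)\neq\pi(L_3)$, so by Lemma \ref{lem-geometry}(A) each point of $L_1$ is joined to a unique point of $L_3$ by a line; this gives a homeomorphism $\beta:L_1\to L_3$ carrying the ray $E\cap L_1$ to a ray of $L_3$. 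Choosing $y',y''$ far out along $\rho_1,\rho_2$ (or, if needed, running the construction with $E^c$ in place of $E$), one arranges that $\beta(E\cap L_1)$ and $E\cap L_3$ overlap in an interval $I\subset L_1$. For $r\in I$ the joining line $M_r$ meets $E$ at both $r$ and $\beta(r)$, so by precise monotonicity the segment of $M_r$ from $r$ to $\beta(r)$ lies in $E$; as $r$ ranges over $I$ these segments sweep out a smooth ruled surface $\Si\subset E$ (it is a $2$-manifold near a generic point because the $M_r$ are distinct and depend smoothly on $r$, by Lemma \ref{lem-geometry}).

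Since $\Si$ is a surface in the $3$-manifold $\H$, some $L^*\in\L(\H)$ meets $\Si$ transversely at a point $q$; pick $p\in L^*\setminus\{q\}$. Because $\Int(E^c)=\emptyset$ we have $p\notin\Int(E^c)$, so Proposition \ref{prop-2points}(1) gives $(p,q)\subset\Int(E)$; but $(p,q)$ is a nonempty open segment, contradicting $\Int(E)=\emptyset$. Hence $\D E\neq\H$. The step I expect to be the main obstacle is producing the surface $\Si$ inside $E$ (or $E^c$): one must combine precise monotonicity with Lemma \ref{lem-geometry} to force the joining segments to lie in $E$, and the delicate point is verifying that the two $E$-rays on the parallel lines $L_1$ and $L_3$ can always be made to overlap under $\beta$ --- if not directly, then after interchanging $E$ and $E^c$.
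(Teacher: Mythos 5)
Your overall strategy is the same as the paper's: produce a relatively open piece of a ruled horizontal surface inside $E$ or $E^c$, then feed it to Proposition \ref{prop-2points} to contradict $\Int(E)=\Int(E^c)=\emptyset$. The preliminary observation that precise monotonicity forces $E\cap L$ to be a ray or all of $L$ for every line $L$ through a point of $E$ is correct and clean, and using parallel lines via Lemma \ref{lem-geometry}(\ref{item-parallel}) instead of skew lines via part (\ref{item-skew}) is a legitimate variation. The problem is exactly where you predicted it: the overlap step is a genuine gap, and your proposed remedies do not close it.

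Concretely, the correspondence $\beta:L_1\to L_3$ of Lemma \ref{lem-geometry}(\ref{item-parallel}) reverses orientation along the lines (its projection to $\R^2$ is the point reflection through the midpoint $x$), so $\beta(E\cap L_1)$ is a ray unbounded in the direction opposite to $E\cap L_1$. If $E\cap L_3$ happens to be unbounded in the other direction, the two rays meet head-on, and the degenerate configuration $\beta(E\cap L_1)=(-\infty,a]$, $E\cap L_3=[a,\infty)$ can occur; there the intersection is a single point, and passing to complements gives $\emph{disjoint}$ rays, so interchanging $E$ and $E^c$ does not help, and moving $y',y''$ farther out is not shown to break the degeneracy. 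The missing idea is the one the paper uses for its ``mixed'' case: if $r\in E\cap L_1$ but $\beta(r)\in E^c\cap L_3$ (which in the degenerate configuration happens for a whole ray of $r$'s), then precise monotonicity forces $M_r\cap E$ and $M_r\cap E^c$ to be complementary rays containing $r$ and $\beta(r)$ respectively, so the ray of $M_r$ beyond $r$ away from $\beta(r)$ lies entirely in $E$; sweeping these rays over $r$ again yields an open piece of ruled surface in $E$. With that observation (or with the paper's device of first passing to subintervals of $L_1$ and $L_3$ on which $\chi_E$ is constant and then treating the ``same side'' and ``opposite sides'' cases separately), the case analysis closes and your argument becomes complete.
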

\begin{proof}
Let $L_1,L_2\in\L(\H)$ be skew lines.  By Lemma \ref{lem-geometry},
for $i\in \{1,2\}$, we can find open intervals 
$(a_i,b_i)\subset L_i$
and smooth parametrizations $x_i:I\ra (a_i,b_i)$
such that for all $t\in I$, the points $x_1(t)\in L_1$
and $x_2(t)\in L_2$ lie in a line $L_t$.  By monotonicity,
after passing to subintervals if necessary, we may assume that
for $i\in \{1,2\}$, the characteristic function $\chi_E$ is constant on $(a_i,b_i)$,
i.e. it lies entirely in $E$, or entirely in $E^c$.

If $(a_1,b_1)\cup (a_2,b_2)\subset E$ (respectively
$E^c$), then by monotonicity
for all $t\in I$, the segment $[x_1(t),x_2(t)]\subset L_t$ lies in 
$E$ (respectively $E^c$). Hence $\cup_{t\in I}\;(x_1(t),x_2(t))$
is a relatively open subset $U$ of a ruled surface
which is contained in $E$ (respectively $E^c$).   
If $(a_1,b_1)\subset E$ and $(a_2,b_2)\subset E^c$ (or vice-versa), 
then  $L_t\setminus [x_1(t),x_2(t)]$ is a union two rays, one of 
which lies in $E$, and the other lies in $E^c$.  Therefore
in this case, we also obtain a relatively open subset $U$
of a ruled surface, which lies in $E$, or in $E^c$.

Choose a line  $L$ which intersects
the surface $U$ transversely at some point $p$, and
pick $x\in L\setminus U$.  Then Proposition \ref{prop-2points} 
implies that either $E$ or $E^c$
has nonempty interior.

\end{proof}

\begin{lemma}
\label{lem-deplane}
If $E$ and $E^c$ are both nonempty, then $\D E$
is a plane, and $C\subset E\subset\bar C$, where
$C$ is a component of $\H\setminus \D E$.
\end{lemma}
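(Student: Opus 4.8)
The plan is to combine the preceding lemmas to pin down $\partial E$ exactly. Since $E$ and $E^c$ are both nonempty and $\H$ is connected, $\partial E\neq\emptyset$ (otherwise $E$ would be clopen, hence $\emptyset$ or $\H$). Lemmas \ref{lem-inplaneorh} and \ref{lem-noth} then furnish a plane $P$ (vertical or horizontal) with $\partial E\subset P$. In either case $\H\setminus P$ is a disjoint union of two nonempty open connected sets $C_1,C_2$: for a vertical plane $\pi^{-1}(\bar L)$ these are the preimages under $\pi$ of the two open half-planes of $\R^2\setminus\bar L$, and for a horizontal plane centered at $p$ one sees this by writing the plane as a graph over $\R^2=\H/[\H,\H]$, which also shows $\ol{C_i}=C_i\cup P$. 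Since each $C_i$ is connected and disjoint from $\partial E$, it lies entirely in $\Int(E)$ or entirely in $\Int(E^c)$.

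The heart of the argument is to rule out the possibility that $C_1$ and $C_2$ lie in the same one of $\Int(E)$, $\Int(E^c)$; by the symmetry between $E$ and $E^c$ (recall $E$ is precisely monotone iff $E^c$ is) it is enough to exclude $C_1\cup C_2\subset\Int(E)$, which would force $E^c\subset P$. The key geometric input is that if $L\in\L(\H)$ is \emph{not} contained in $P$, then $L\cap P$ has at most one point: for a vertical plane $\pi^{-1}(\bar L)$ this holds because $\pi(L)$ is a line distinct from $\bar L$, hence meets $\bar L$ in at most one point, and $L$ meets each fiber of $\pi$ in at most one point; for a horizontal plane centered at $p$, a direct computation using the area/holonomy relation of Lemma \ref{lem-enclosedarea} shows that the condition for a point of $L$ to lie on $P$ is affine in the parameter along $L$, hence has at most one solution unless $L\subset P$. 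Granting this, pick $y\in E^c\subset P$; when $P$ is horizontal choose $y$ distinct from the center $p$, which is possible since $E^c=\{p\}$ would make $E=\H\setminus\{p\}$ fail precise monotonicity on any line through $p$. All but at most one line through $y$ is not contained in $P$, so we may pick such a line $L$ through $y$; then $L\cap E^c\subset L\cap P=\{y\}$, whence $L\cap E=L\setminus\{y\}$ is disconnected, contradicting precise monotonicity of $E$.

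Thus, after relabeling, $C_1\subset\Int(E)$ and $C_2\subset\Int(E^c)$. Then $C_1\subset E$ and $E\subset\H\setminus C_2=C_1\cup P=\ol{C_1}$, giving $C_1\subset E\subset\ol{C_1}$, which is the asserted sandwich with $C=C_1$. Finally, since $\ol{C_1}\supset P$ and $\ol{C_2}\supset P$, every point of $P$ is simultaneously a limit of points of $C_1\subset E$ and of points of $C_2\subset E^c$, hence lies in $\partial E$; combined with $\partial E\subset P$ this yields $\partial E=P$, so $\partial E$ is a plane, completing the proof. The step I expect to be the main obstacle is the geometric fact that a line not lying in $P$ meets $P$ in at most one point: it is routine for vertical planes but for horizontal planes it requires unwinding the enclosed-area computation of Lemma \ref{lem-enclosedarea}, together with the small amount of bookkeeping needed to sidestep the degenerate situation in which the only available point of $E^c$ is the center of a horizontal plane.
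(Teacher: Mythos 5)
Your proof is correct and follows essentially the same route as the paper's: both use the two preceding lemmas to place $\D E$ inside a plane $P$, and both use the fact that a line not contained in $P$ meets $P$ in at most one point, combined with precise monotonicity, to force $\D E=P$ and the sandwich $C\subset E\subset\bar C$. The only difference is organizational --- you work with the two components of $\H\setminus P$ from the outset rather than first arguing (as the paper does) that $\H\setminus\D E$ is connected when $\D E\subsetneq P$ --- and your computation that the incidence condition of a line with a horizontal plane is affine in the line parameter correctly justifies the transversality the paper invokes implicitly.
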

\begin{proof}
By assumption, $\D E$ is nonempty.  By Lemmas
\ref{lem-inplaneorh} and \ref{lem-noth}, it follows
that $\D E$ is contained in a plane $P$.  If $\D E\neq P$, 
then $\H\setminus \D E$ is connected, and hence $\H\setminus \D E$
is contained in $E$, or in $E^c$.  We may therefore
assume that it is contained in $E$.  But then every point 
$x\in P$ other than the center of $P$  (if $P$ is horizontal)
 lies on a line $L$ transverse to $P$, so $L\setminus \{x\}$
is contained in $E$, and by precise monotonicity, we get 
$L\subset E$.  It follows that $E=\H$.  This is a contradiction,
so $\D E=P$.  The assertion that $C\subset E\subset \bar C$
follows from the Jordan separation theorem (or by the elementary 
fact that a horizontal or vertical plane separates into two components).
\end{proof}

\section{The classification of monotone sets}
\label{sec-monotone}

The goal of this section is:

\begin{theorem}
\label{thm-monotonehalfspace}
If $E\subset \H$ is a monotone set, then modulo a
null set, either $E=\emptyset$, $E=\H$, or $E$ is a 
half-space.
\end{theorem}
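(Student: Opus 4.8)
The plan is to follow the outline of Section~\ref{sec-preciselymonotone}, reducing Theorem~\ref{thm-monotonehalfspace} to the precisely monotone case (Theorem~\ref{thm-preciselymonotone}) by producing a precisely monotone representative. Given a monotone set $E$, let $E^+$ and $E^-$ denote the measure-theoretic interiors of $E$ and of $E^c=\H\setminus E$ --- the (open) sets of points admitting a ball contained, modulo a null set, in $E$ respectively in $E^c$ --- and put $E^0=\H\setminus(E^+\cup E^-)$, a closed set with $|E\triangle E^+|\le|E^0|$. The goal is to show that $E^0$ is null and that $E^+$ is precisely monotone; Theorem~\ref{thm-preciselymonotone} then forces $E^+$, hence $E$ modulo a null set, to be $\emptyset$, $\H$, or a half-space.

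\emph{Step 1 (an almost-everywhere filling lemma).} I would first prove a variant of Proposition~\ref{prop-2points} in which the transverse intersection of $L$ with a surface inside $E$ is replaced by a density hypothesis and $\Int(E)$ is replaced by $E^+$. The mechanism is the submersion property of the broken-horizontal-line endpoint map $\Ga$ from Lemma~\ref{lem-gapsubmersion}: $\Ga$, and the two maps sending a parameter tuple $(x,v_1,v_2)$ to the line through $x$ in direction $v_1$ and to the line through $x\exp(v_1)$ in direction $v_2$, are submersions on the locus where $v_1,v_2,v_1+v_2\neq0$, so the preimage of the null family of \emph{bad} lines --- those $L$ for which $E\cap L$ fails to be monotone --- is null in parameter space. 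Hence for almost every perturbed broken horizontal line both legs lie on good lines, and applying one-dimensional monotonicity leg by leg, together with Fubini and the Lebesgue density theorem, propagates density-one information along broken horizontal lines just as in Section~\ref{sec-preciselymonotone}.

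\emph{Step 2 ($E^0$ is null and $E^+$ is precisely monotone).} Using Step~1 as a sweeping device one shows $|E^0|=0$: otherwise Fubini produces a positive-measure family of lines meeting $E^0$ in positive length, and sweeping density points of $E$ and of $E^c$ toward such a line along broken horizontal lines contradicts one-dimensional monotonicity on a good line. With $|E^0|=0$, for almost every line $L$ the relatively open sets $E^+\cap L$ and $E^-\cap L$ coincide, modulo null subsets of $L$, with $E\cap L$ and $E^c\cap L$, so each is an open ray (or $\emptyset$ or $L$) up to a null set of points; a further application of Step~1 together with openness of $E^+$ and $E^-$ fills in these gaps and upgrades the conclusion from almost every line to every line. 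Indeed, if $E^+\cap L$ were disconnected for some $L$, openness of $E^+$ would spread the gap to a positive-measure family of lines, contradicting what was just shown, and likewise for $E^-$. Thus $E^+$, and also its complement, meets every line in a connected set, i.e. $E^+$ is precisely monotone, and by Theorem~\ref{thm-preciselymonotone} (since $E^+$ is open) it equals $\emptyset$, $\H$, or an open half-space.

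The main obstacle is Step~2, that is, passing from the one-dimensional, almost-everywhere monotonicity hypothesis to genuine topological control of one representative: one must prove that the measure-theoretic boundary $E^0$ is null and that no residual gaps persist along individual lines, and the only available tool is the submersion-and-Fubini sweeping of Step~1, which must be set up so that the null family of bad lines really does pull back to a null set under each broken-line construction used. This is precisely the technical complication absent from the precisely monotone case, where monotonicity holds on every line and $\D E$ is analyzed directly via Lemmas~\ref{lem-morethanonepoint}--\ref{lem-deplane} and Lemma~\ref{lem-skeworparallel}; once $E^+$ is known to be precisely monotone those same lemmas apply, and we conclude via the Jordan separation theorem that $E$ is, modulo a null set, $\emptyset$, $\H$, or a half-space.
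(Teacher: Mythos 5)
Your overall strategy --- pass to the measure-theoretic interiors $E^+=\Intmu(E)$, $E^-=\Intmu(E^c)$, show the residual set $E^0=\dmu E$ is null and that $E^+$ is precisely monotone, and then quote Theorem \ref{thm-preciselymonotone} --- is a genuinely different organization from the paper's, which instead re-runs the entire five-step outline of Section \ref{sec-preciselymonotone} with $\dmu E$ and $\Intmu$ in place of $\D E$ and $\Int$ (via the measurable sweeping statement, Proposition \ref{prop-2pointsmeasurable}, and the admissible-families machinery of Lemma \ref{lem-transversefamily}). The reduction would be attractive if it worked, but as written Step 2 assumes exactly the hardest part of the theorem. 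The claim that $|E^0|>0$ leads directly to a contradiction with a.e.-line monotonicity by ``sweeping density points'' is not justified: membership in $\dmu E$ is a three-dimensional density condition ($\min(\mu(B_r(x)\cap E),\mu(B_r(x)\cap E^c))>0$ for all $r$), which is perfectly compatible with $E\cap L$ having full or empty one-dimensional density at $x$ along every individual line through $x$; a priori $\dmu E$ could even be all of $\H$, with neither $E$ nor $E^c$ having any measure-theoretic interior, and then there are no balls in $E^+$ or $E^-$ to sweep from. Ruling this out is precisely step (4) of the paper's outline, and it requires the skew-lines/hyperbola geometry of Lemma \ref{lem-geometry}(\ref{item-skew}), the construction of an admissible family of ruled surfaces lying (mod null) in $E$ or in $E^c$ (Lemma \ref{lem-ael1l2} plus the Lusin argument), and only then an application of Proposition \ref{prop-2pointsmeasurable}. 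Your Step 1 builds the right sweeping tool, but without first producing such a surface family you cannot invoke it, and without steps (1)--(3) (that $\dmu E$ is a union of lines contained in a plane) you cannot conclude that $\dmu E$ is null. So the nullity of $E^0$ is an output of the classification argument, not an input available at the start.

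There is a second, smaller gap in the upgrade from ``a.e.\ line'' to ``every line.'' Your argument that a disconnection of $E^+\cap L$ spreads to a positive-measure family of lines works, because there you combine one positive-measure condition ($b\in\spt(E^c)$) with full-measure conditions ($a,c\in E^+$ give balls contained in $E$ mod null). But precise monotonicity of $E^+$ also requires $(E^+)^c\cap L=(E^-\cup E^0)\cap L$ to be connected for \emph{every} $L$, and a disconnection there produces two points $a,c\in\spt(E^c)$ separated by $b\in E^+$. Each of $a$ and $c$ only yields a \emph{positive}-measure family of nearby lines meeting $E^c$ substantially, and two positive-measure families need not intersect, so Fubini alone does not produce a single bad monotone line. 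This asymmetry --- one support point versus one full-measure surface family --- is exactly why Proposition \ref{prop-2pointsmeasurable} is formulated the way it is, and your reduction would need to respect it (e.g.\ by first showing, as the paper does, that two points of $\dmu E$ on a line force the whole line into $\dmu E$, and then analyzing the geometry of $\dmu E$).
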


For the remainder of this section, $E$ will
denote a fixed
monotone set $E\subset\H$.

\subsection*{Measure theoretic preparation}
The proof of Theorem \ref{thm-monotonehalfspace} will
follow the proof of Theorem \ref{thm-preciselymonotone}
closely.  The main difference stems from the fact that
the monotonicity condition only holds for almost every line $L$,
and up to a null set within $L$; this forces 
one to modify the proof of 
Theorem \ref{thm-preciselymonotone} by considering positive
measure families of certain configurations (such as 
piecewise horizontal curves), instead
of single configurations.

We begin with a measure-theoretic replacement for the boundary
and interior.  These were chosen so that the proof  of the classification of 
monotone sets closely parallels the proof for precisely monotone sets.

\begin{definition}
The {\bf support} of a measurable set $E$, denoted
 $\spt(E)$, is the 
support of its characteristic function.
The {\bf (measure-theoretic) boundary} $\dmu E$ of $E\subset\H$
is $\spt(E)\cap \spt(E^c)$, i.e.
the set of points $x\in \H$ such that 
$\min (\mu(B_r(x)\cap E),\mu(B_r(x)\cap E^c))>0$ for all
$r>0$.  The {\bf measure-theoretic interior} $\Intmu(E)$
of $E$  is $\H\setminus\spt(E^c)$.
\end{definition}
Note that $\dmu E$ is a closed set, and $\H\setminus\D E$
is the disjoint union of $\Intmu(E)$ and $\Intmu (E^c)$.
Similar defintions apply to subsets of $\R$.
Also, a subset of $\R$ is monotone iff its measure-theoretic
boundary contains at most one point.

\begin{definition}
\label{def-monotonethings}
A line $L\in\L(\H)$ is {\bf monotone} if
$E\cap L$ is a monotone subset of $L\simeq \R$.  A pointed line $(L,p)$
is {\bf monotone} if $p\in L\in\L(\H)$, the line $L$
is monotone, and either  [$p$ belongs to $E$ and lies in the measure-theoretic
interior of $E\cap L$ (relative to $L$)], or 
[$p$ belongs to $E^c$ and lies in the measure-theoretic interior of
$E^c\cap L$ (relative to $L$)].
  A direction
$v\in \De\setminus\{0\}$ is {\bf monotone} if almost every line
tangent to $v$ is monotone.
\end{definition}

Note that $L$ is monotone if $L\cap E$ is a measurable
subset of $L$, and the measure theoretic
boundary of $E\cap L$  (in $L$) contains at most one
point.

\begin{lemma}

\mbox{}
\begin{enumerate}
\setlength{\itemsep}{.75ex}%
\setlength{\parskip}{.75ex}%
\item
\label{item-directionmonotone} Almost every direction $v\in \De$ is monotone.
\item 
\label{item-pointedlinemonotone}
 Almost every pointed line is monotone. 
\end{enumerate}
\end{lemma}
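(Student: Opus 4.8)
The plan is to deduce both statements by disintegrating an appropriate smooth measure along a natural fibration, using only the hypothesis that $E$ is monotone --- equivalently, that the set of non-monotone lines is contained in a measurable null set $N\subset\L(\H)$ --- together with one elementary remark about monotone subsets of $\R$.

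For (1), I would use the smooth fibration of $\L(\H)$ onto the circle of directions $\mathbb S$ whose fiber over a direction $v$ is the surface of lines tangent to $v$, carrying a smooth measure $\mu_v$ of positive density (this fibration exists for the same reasons as the fibration $\P\ra\L(G)$ recalled in Section \ref{sec-monotonegeodesic}). Disintegrating the smooth measure on $\L(\H)$ against the $\mu_v$, with $\nu$ the resulting smooth measure on $\mathbb S$, one has $\int_{\mathbb S}\mu_v(N)\,d\nu(v)=0$, so Fubini's theorem gives $\mu_v(N)=0$ for $\nu$-a.e.\ $v$; that is, $\nu$-a.e.\ direction is monotone. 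Since whether a direction is monotone depends only on $v$ up to positive rescaling, and the radial projection $\De\setminus\{0\}\ra\mathbb S$ pulls null sets back to null sets, this upgrades to: Lebesgue-a.e.\ $v\in\De$ is monotone.

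For (2), the preliminary remark is that if $L\in\L(\H)$ is monotone then, as noted after Definition \ref{def-monotonethings}, its intrinsic measure-theoretic boundary $\dmu(E\cap L)$ in $L\simeq\R$ has at most one point, so $E\cap L$ coincides, off an $\l$-null subset of $L$, with $\emptyset$, with $L$, or with a ray; in each case it agrees with its own measure-theoretic interior up to at most one point. As $\Intmu(E\cap L)$ depends only on the $\l$-equivalence class of $E\cap L$, the symmetric differences $(E\cap L)\triangle\Intmu(E\cap L)$ and $(E^c\cap L)\triangle\Intmu(E^c\cap L)$ are $\l$-null, and since $L\setminus\dmu(E\cap L)=\Intmu(E\cap L)\sqcup\Intmu(E^c\cap L)$ it follows that the pointed line $(L,p)$ is monotone for all $p$ outside an $\l$-null set $B_L\subset L$. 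I would then disintegrate the natural measure on the space of pointed lines over $\L(\H)$ --- the fiber over $L$ being $L$ with its linear measure $\l$ --- and observe that $(L,p)$ can fail to be monotone only if $L\in N$ or $p\in B_L$; the pointed lines of the first type form a set lying over the null set $N$ and hence of measure zero, while those of the second type form a set of measure $\int_{\L(\H)\setminus N}\l(B_L)\,dL=0$. Therefore almost every pointed line is monotone.

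The only step requiring genuine care is the preliminary remark of the last paragraph: since a line has measure zero in $\H$, the agreement between literal membership in the fixed set $E$ and membership in the measure-theoretic interior of $E\cap L$ computed intrinsically in $L$ is not automatic, and it is precisely the monotonicity of $L$ --- forcing $E\cap L$ to be essentially a ray --- that reconciles the two up to an $\l$-null set. Everything else is the standard disintegration (Fubini) theorem for smooth positive-density measures along smooth fibrations, together with the (routine) measurability of the set $\{(L,p):p\in B_L\}$ and of the function $L\mapsto\l(B_L)$.
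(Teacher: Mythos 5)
Your proposal is correct and follows essentially the same route as the paper: part (1) is Fubini applied to the fibration of $\L(\H)$ over the space of directions, and part (2) is Fubini applied to the fibration of pointed lines over $\L(\H)$, with the fiberwise input being that almost every point of a monotone line yields a monotone pointed line. The only difference is that you spell out this last fiberwise claim (reconciling literal membership in $E$ with the intrinsic measure-theoretic interior of $E\cap L$), which the paper simply asserts; your justification of it is correct.
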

\begin{proof}
(\ref{item-directionmonotone})
Let ${\mathcal P}(\De)$ denote the projectivization of the horizontal
space $\De$.  Then there is a smooth fibration $\L(\H)\ra {\mathcal P}(\De)$
which sends a line $L$ tangent to the direction $[v]\in{\mathcal P}(\De)$ to
$[v]$, 
whose fibers are the lines parallel to a given direction.  Therefore
by Fubini's theorem, for almost every $[v]\in {\mathcal P}(\De)$, the set
of monotone elements in the fiber over $[v]$
is a measurable subset of full measure.  

(\ref{item-pointedlinemonotone})  The space of pointed lines fibers
over $\L(\H)$, with fiber $\R$.  Since there is a full measure set
$Y\subset\L(\H)$ consisting of monotone lines, and almost every point $p$
in the fiber over a monotone line $L$ yields a monotone pointed line
$(L,p)$, the statement follows from Fubini's theorem.
\end{proof} 

If $x\in \H$, $v\in \De\setminus \{0\}$,
we will use $L_{x,v}$ to denote the line passing through 
$x$ tangent to the horizontal left invariant vector vield 
$v$.  

\begin{lemma}
\label{lem-aetriplemonotone}
For a.e. triple $(x,v_1,v_2)\in \H\times\De\times\De$,
the pairs $(L_{x,v_1},x)$, $(L_{x,v_1},x\exp v_1)$,
$(L_{x\exp v_1,v_2},x\exp v_1)$, and 
$(L_{x\exp v_1,v_2},x\exp v_1\exp v_2)$ are monotone.
\end{lemma}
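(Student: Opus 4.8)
The plan is to derive the statement from part~(\ref{item-pointedlinemonotone}) of the preceding lemma, which says that the set $N$ of non-monotone pointed lines is a null subset of the space of pointed lines, equipped with its smooth positive-density measure (under which that space is a fibration over $\L(\H)$ with fiber $\R$). Each of the four prescriptions in the statement defines a smooth map
$$
\Phi_i\colon \H\times(\De\setminus\{0\})\times(\De\setminus\{0\})\lra\{\text{pointed lines}\},\qquad i=1,\dots,4,
$$
and it suffices to show that each $\Phi_i^{-1}(N)$ is null; then the complement of $\{v_1=0\}\cup\{v_2=0\}\cup\bigcup_{i=1}^4\Phi_i^{-1}(N)$ is the required full-measure set of triples.

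First I would record the trivial identity $L_{y,w}=L_{y\exp w,w}$, which lets us express all four maps through a single model map. Identifying a pointed line $(L,p)$ with the pair $(p,[w])\in\H\times{\mathcal P}(\De)$, where $[w]$ is the tangent direction of $L$ at $p$, the model map is $\Psi\colon \H\times(\De\setminus\{0\})\ra\H\times{\mathcal P}(\De)$, $\Psi(y,w)=(y,[w])$, which is a submersion: its differential is surjective with one-dimensional kernel, the radial direction $\R w$ in $\De$. Using the identity above, one checks $\Phi_1(x,v_1,v_2)=\Psi(x,v_1)$, $\Phi_2(x,v_1,v_2)=\Psi(x\exp v_1,v_1)$, $\Phi_3(x,v_1,v_2)=\Psi(x\exp v_1,v_2)$, and $\Phi_4(x,v_1,v_2)=\Psi(x\exp v_1\exp v_2,v_2)$; that is, $\Phi_i=\Psi\circ\sigma_i$, where $\sigma_i$ is the corresponding auxiliary map to $\H\times(\De\setminus\{0\})$. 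Each $\sigma_i$ is a submersion: fixing the remaining variables, the partial map $x\mapsto x\exp v_1$ (resp.\ $x\mapsto x\exp v_1\exp v_2$) is a diffeomorphism of $\H$, so the differential is already onto the $\H$-factor, while the $\De$-coordinate of $\sigma_i$ is either a coordinate projection or the identity, hence hit directly. Therefore each $\Phi_i$, being a composition of submersions, is a submersion.

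The conclusion then follows from the elementary fact that a smooth submersion pulls null sets back to null sets: in suitable local coordinates a submersion is a projection $\R^a\times\R^b\ra\R^b$, and $\R^a\times(\text{null})$ is null by Fubini, so one covers the source by countably many such charts. Applying this to each $\Phi_i$ and to $N$ shows that $\bigcup_{i=1}^4\Phi_i^{-1}(N)$ is null, which finishes the proof. I do not expect a genuine obstacle here; the only thing requiring care is the bookkeeping that realizes each of the four pointed lines as $\Psi$ composed with an explicit submersion — the essential point being that relocating the marked point along a fixed line (via $L_{y,w}=L_{y\exp w,w}$) does not change the underlying line. This is the positive-measure-family analogue, in the monotone setting, of the cone-type constructions used for precisely monotone sets through the broken horizontal lines $\ga(x,v_1,v_2)$.
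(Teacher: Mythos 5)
Your argument is correct and is essentially the paper's own proof in a mildly repackaged form: the paper fixes a.e.\ pair of monotone directions $(v_1,v_2)$ and uses Fubini plus the fact that $x\mapsto x\exp v_1$ and $x\mapsto x\exp v_1\exp v_2$ are diffeomorphisms, whereas you start from the full-measure set of monotone pointed lines and pull its complement back under the four submersions $\Phi_i$ --- the same genericity argument, with your submersion-pullback lemma subsuming the paper's Fubini-plus-diffeomorphism steps. The bookkeeping via $L_{y,w}=L_{y\exp w,w}$ and the submersion checks are all sound, so no changes are needed.
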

\begin{proof}
Suppose $v_1,v_2\in \De\setminus\{0\}$ are monotone directions.
Then by Fubini's theorem, the set $M_i\subset \H$
of points $w\in \H$
such that $(L_{w,v_i},w)$ is monotone has full measure.
Since the maps $x\mapsto x\exp v_1$, $x\mapsto x\exp v_1\exp v_2$
are diffeomorphisms, it follows that for a.e. $x\in \H$, 
$x\in M_1$, $x\exp v_1\in M_1$, $x\exp v_1\in M_2$, and 
$x\exp v_1\exp v_2\in M_2$.  

Since a.e. direction $v\in \De$ is monotone, the lemma follows.
\end{proof}

\begin{definition}
\label{def-admissiblefamily}
Let $X$, $M$, and $A$ be smooth manifolds.
An {\bf admissible  family 
of submanifolds in $X$} is a smooth submersion $\Phi:M\times A\ra X$
such that $\Phi(\cdot,\al)$ is an embedding for all $\al\in A$.
We let $M_\al$ denote the image of 
$\Phi\restr_{M\times\{\al\}}$, and refer simply to the 
resulting family of submanifolds
$\{M_\al\}_{\al \in A}$.
\end{definition}
 
\begin{lemma}
\label{lem-transversefamily}
Let $\{M_\al\}_{\al\in A}$, $\{N_\be\}_{\be\in B}$
be two admissible families of manifolds in a smooth manifold
$X$, and $S\subset X$ be a measurable subset.  We equip 
$A$, $B$,  $X$, and each of the submanifolds $M_\al$, $N_\be$
 with smooth measures with positive
density.  Assume that
\begin{itemize}
\item  For all $\al\in A$, $\be\in B$,
the submanifolds $M_\al$ and $N_\be$ intersect
transversely in a single point.
\item For a positive measure set $A_1\subset A$, for all 
$\al\in A_1$ the
submanifold $M_\al$ interesects $S$ in a full measure
subset of $M_\al$.
\end{itemize}
Then for a full measure set of $\be\in B$, the submanifold
$N_\be$ intersects $S$ in positive measure subset of $N_\be$.
\end{lemma}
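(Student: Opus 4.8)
The plan is to reduce the assertion to Fubini's theorem, once the relevant incidence data has been given a smooth structure. Define the \emph{intersection map} $\Psi\colon A\times B\to X$ by letting $\Psi(\al,\be)$ be the unique point of $M_\al\cap N_\be$; write $\Phi_M\colon M\times A\to X$ and $\Phi_N\colon N\times B\to X$ for the submersions defining the two admissible families. The first, and only genuinely nontrivial, step is to prove that $\Psi$ is a smooth submersion, and that for each fixed $\al\in A$ the partial map $\Psi(\al,\cdot)\colon B\to M_\al$ is a submersion, as is $\Psi(\cdot,\be)\colon A\to N_\be$ for each fixed $\be\in B$. For smoothness, consider the fiber product
\[
Z=\{(m,\al,n,\be)\in M\times A\times N\times B:\Phi_M(m,\al)=\Phi_N(n,\be)\},
\]
which is the preimage of the diagonal $\Delta\subset X\times X$ under $\Phi_M\times\Phi_N$; since the latter is a submersion it is transverse to $\Delta$, so $Z$ is a submanifold of codimension $\dim X$. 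The hypothesis $M_\al\pitchfork N_\be$ with zero-dimensional intersection forces $\dim M+\dim N=\dim X$, hence $\dim Z=\dim A+\dim B$; the projection $Z\to A\times B$ has singleton fibers, and if a tangent vector to $Z$ over $(\al,\be)$ dies under this projection then the corresponding variation of $m$ and $n$ produces a vector in $T_xM_\al\cap T_xN_\be=\{0\}$ (with $x$ the intersection point), whence it vanishes because $\Phi_M(\cdot,\al)$ and $\Phi_N(\cdot,\be)$ are immersions. Thus $Z\to A\times B$ is an immersion between equidimensional manifolds, hence a diffeomorphism, and $\Psi$ is its composition with $\Phi_M$, so it is smooth. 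The submersivity of $\Psi(\cdot,\be)$ follows the same way from $\Phi_M^{-1}(N_\be)\subset M\times A$, which projects diffeomorphically to $A$ and submersively to $N_\be$, and symmetrically for $\Psi(\al,\cdot)$; finally $\Psi$ itself is a submersion because $A$-variations already span $T_xN_\be$, $B$-variations span $T_xM_\al$, and $T_xM_\al+T_xN_\be=T_xX$ by transversality.

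Granting this, the remainder is bookkeeping. A submersion is locally a coordinate projection, so it pulls back measurable sets to measurable sets and null sets to null sets; the same holds for $\Psi(\al,\cdot)$ and $\Psi(\cdot,\be)$. Put $S'=\Psi^{-1}(S)\subset A\times B$, a measurable set, and give $A\times B$ the product of the two chosen smooth measures (again a smooth measure of positive density). For $\al\in A$ and $\be\in B$ write $S'_\al=\{\be':(\al,\be')\in S'\}$ and $S'_\be=\{\al':(\al',\be)\in S'\}$; since $\Psi(\al,\be)\in M_\al\cap N_\be$, we have $S'_\al=\Psi(\al,\cdot)^{-1}(S\cap M_\al)$ and $S'_\be=\Psi(\cdot,\be)^{-1}(S\cap N_\be)$.

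Now fix $\al\in A_1$. Then $M_\al\setminus S$ is null in $M_\al$, so $S'_\al$ is co-null in $B$, and Fubini gives $\bigl|(A_1\times B)\setminus S'\bigr|=\int_{A_1}\bigl|B\setminus S'_\al\bigr|\,d\al=0$. Hence $S'\cap(A_1\times B)$ has full measure in $A_1\times B$, in particular positive measure since $\abs{A_1}>0$.

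Suppose, for contradiction, that $B_{\bad}:=\{\be\in B:S\cap N_\be\text{ is null in }N_\be\}$ has positive measure. For each $\be\in B_{\bad}$, the submersion $\Psi(\cdot,\be)\colon A\to N_\be$ pulls the null set $S\cap N_\be$ back to the null set $S'_\be$; by Fubini, $S'\cap(A\times B_{\bad})$ is null. On the other hand, $S'\cap(A_1\times B_{\bad})$ is contained in $S'\cap(A\times B_{\bad})$, yet by the previous paragraph applied within $A_1\times B_{\bad}\subset A_1\times B$ it has full measure in $A_1\times B_{\bad}$, a set of positive measure $\abs{A_1}\cdot\abs{B_{\bad}}>0$ — a contradiction. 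Therefore $B_{\bad}$ is null, i.e.\ $S\cap N_\be$ has positive measure in $N_\be$ for almost every $\be$, as claimed. I expect the main obstacle to be the submersivity statements of the first paragraph; the measure-theoretic conclusion is then a routine double use of Fubini.
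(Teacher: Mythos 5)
Your proof is correct and follows essentially the same route as the paper: both define the intersection map $A\times B\to X$, observe that it and its partial maps are submersions, and then conclude by a double application of Fubini's theorem to $\Psi^{-1}(S)$. The only difference is that you carefully justify the submersion claims (via the fiber product and dimension count), which the paper simply asserts as consequences of transversality and admissibility.
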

\begin{proof}
Let $I:A\times B\ra X$ be the map which sends 
$(\al,\be)\in A\times B$ to the unique intersection point
of $M_\al$ and $N_\be$.  By the transversality assumption and the
definition of admissible families of submanifolds, the map
$I$ is a smooth submersion from $A\times B$ onto an open subset
of $X$.  Also, for each $\al\in A$ (respectively $\be\in B$),
the restriction of $I$ to $\{\al\}\times B$  (respectively
$A\times\{\be\}$) is a  submersion onto a relatively
open subset of $M_\al$ (respectively $N_\be$).  

Let $\hat S\defeq I^{-1}(S)$.

Suppose $\al\in A_1$.  Since 
$I\restr_{\{\al\}\times B}:
\{\al\}\times B\ra M_\al$ is a submersion, it follows that
$\hat S\cap(\{\al\}\times B)$ has full measure in 
$\{\al\}\times B$.

We claim that there is a full measure subset $B_1\subset B$
such that for every $\be\in B_1$, 
the intersection $\hat S\cap (A\times\{\be\})$ has positive
measure in $A\times\{\be\}$.  To see this, note that otherwise
there would be a positive measure subset $B_0\subset B$
such that for every $\be\in B_0$, the intersection 
$\hat S\cap (A\times \{\be\})$ has zero measure in 
$A\times\{\be\}$.  But then by Fubini's theorem,
$\hat S\cap (A\times B_0)$ has measure zero, which contradicts
the fact that $\hat S\cap(A\times B_0)$ intersects a positive
measure set of fibers $\{\al\}\times B$ in a set of positive
measure.

Now for every $\be\in B_1$, the intersection $S\cap N_\be$
has positive measure in $N_\be$, since its inverse image
under $I\restr_{A\times \{\be\}}$ has positive measure
in $A\times\{\be\}$. 

\end{proof}

\subsection*{The proof of Theorem \ref{thm-monotonehalfspace}}

With our measure-theoretic preparations complete, we will now 
prove
the theorem using essentially the same outline as the proof of 
Theorem \ref{thm-preciselymonotone}:

\begin{enumerate}
\setlength{\itemsep}{.75ex}%
\setlength{\parskip}{.75ex}%
\item If  $L\in \L(\H)$ 
contains more than one
point of $\dmu E$, then $L\subset \dmu E$.
\item  $\dmu E$ is a union of lines.
\item  Either $\dmu E$ is contained 
in a plane, or 
$\dmu E=\H$.
\item Lemma \ref{lem-noth}: The case $\dmu E=\H$ does not occur.
\item Lemma \ref{lem-deplane}: If $\dmu E$ is nonempty, then it is 
a plane
and  $C\subset E\subset\bar C$ modulo sets of measure zero, where $C$ is a connected
component of $\H\setminus \dmu E$.
\end{enumerate}

\bigskip
\begin{proposition}
\label{prop-2pointsmeasurable}
Suppose $L\in \L(\H)$, $p\in L$, and $\{\Si_\al\}_{\al\in A}$
is an admissible family of surfaces.  Assume that
\begin{itemize}
\setlength{\itemsep}{.75ex}%
\setlength{\parskip}{.75ex}%
\item For all $\al\in A$, the surface $\Si_\al$ intersects
$L$ transversely in a single point.
\item There is a 
measurable
subset $A_1\subset A$
such that for every a.e. $\al\in A_1$, the surface $\Si_\al$ 
intersects
$E$  in a set of full measure in $\Si_\al$.
\item For some $q\in L\setminus\{p\}$, there is an 
$\al_0\in \spt(A_1)$
 such that 
$\Si_{\al_0}\cap L=\{q\}$.
\end{itemize}
Then:
\begin{enumerate}
\setlength{\itemsep}{.75ex}%
\setlength{\parskip}{.75ex}%
\item If $p\in\spt(E)$, 
then  the open segment
$(p,q)\subset L$  is contained in $\Intmu(E)$ .
\item If $p\in \spt(E^c)$, then connected component  of 
$L\setminus\{q\}$
not containing $p$ lies in $\Intmu(E)$.  
\end{enumerate}
The same statements hold with the roles of $E$ and $E^c$ exchanged.
\end{proposition}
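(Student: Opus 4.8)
\emph{Overall strategy.} The plan is to run the proof of Proposition~\ref{prop-2points} essentially verbatim, replacing each individual piecewise-horizontal curve by a positive-measure family of such curves and each pointwise assertion by an almost-everywhere one, the passage between the two being governed by Lemmas~\ref{lem-aetriplemonotone} and~\ref{lem-transversefamily} together with the submersion Lemma~\ref{lem-gapsubmersion}. It suffices to prove the two displayed assertions: the versions with $E$ and $E^c$ exchanged follow by applying the result to $E^c$ (which is monotone iff $E$ is), and (2) is handled exactly like (1) once $(p,q)$ is replaced by the component of $L\setminus\{q\}$ away from $p$ and one works, as in Proposition~\ref{prop-2points}(2), with the portion of each broken curve lying beyond its crossing with $\Sigma_{\alpha}$. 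So fix $z\in(p,q)$; the goal is to produce $r>0$ with $\mu(B_r(z)\cap E^c)=0$, i.e. $z\in\Intmu(E)$.

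\emph{Scaffold and the two broken segments.} Orient $L$ from $p$ to $q$, choose $y\in L$ with $q$ strictly between $z$ and $y$, and write $z=\Ga_p(\bar v,\bar v)$ with $\bar v\in\De\setminus\{0\}$, so that $\ga(p,\bar v,\bar v)$ is the sub-segment of $L$ from $p$ to $z$. Since $\Sigma_{\alpha_0}$ meets $L$ transversely at $q$ and $\alpha_0\in\spt(A_1)$, continuity of $\Ga$ and the implicit function theorem furnish $\eps>0$, reals $\rho_1,\rho_2>1$, and a neighborhood $V$ of $\alpha_0$ with $\mu(V\cap A_1)>0$, such that for every $(x,v_1,v_2)\in\Omega:=B_\eps(p)\times B_\eps(\bar v)^2$ and every $\alpha\in V$ the lines $L_{x,v_1}$ and $L_{x\exp v_1,v_2}$ each meet $\Sigma_{\alpha}$ transversely in a single point, lying beyond $x$ (respectively $x\exp v_1$) but before the endpoints of the extended curves $\ga(x,\rho_1 v_1,0)$ and $\ga(x,v_1,\rho_2 v_2)$, which terminate near $y$. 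Now $(x,v_1)\mapsto L_{x,v_1}$ and $\alpha\mapsto\Sigma_{\alpha}$ are transverse admissible families, so Lemma~\ref{lem-transversefamily} with $S=E$ and with the positive-measure set of $\alpha\in V\cap A_1$ along which $E$ has full measure in $\Sigma_{\alpha}$ shows that for a.e.\ $(x,v_1)$ near $(p,\bar v)$ the (monotone, by Lemma~\ref{lem-aetriplemonotone}) line $L_{x,v_1}$ contains a density point of $E$ near $q$. Restricting to the positive-measure set of such $(x,v_1)$ for which in addition $x\in\Intmu(E\cap L_{x,v_1})$ — nonempty because $p\in\spt(E)$ — monotonicity of $E\cap L_{x,v_1}$ forces the sub-segment from $x$ to that density point, hence the point $x\exp v_1$, into $\Intmu(E\cap L_{x,v_1})$. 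Repeating the identical argument with base point $x\exp v_1$ along $L_{x\exp v_1,v_2}$ (now $x\exp v_1$ is a density point of $E$, hence of $E\cap L_{x\exp v_1,v_2}$ for a.e.\ $v_2$) produces a positive-measure set $\mathcal{G}\subset\Omega$ on which $\Ga(x,v_1,v_2)\in\Intmu(E\cap L_{x\exp v_1,v_2})$.

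\emph{Endgame, and the main obstacle.} By Lemma~\ref{lem-gapsubmersion}, $\Ga_p$ is a submersion near $(\bar v,\bar v)$ (as $\bar v\neq0$), hence $\Ga$ is a submersion near $(p,\bar v,\bar v)$ and maps $\Omega$ onto an open neighborhood of $z$. A final application of Lemma~\ref{lem-transversefamily}, with the fibers of $\Ga$ as one admissible family and the second-segment lines $L_{x\exp v_1,v_2}$ as the other, converts the positive-measure information on $\mathcal{G}$ into the statement that $E^c$ meets some neighborhood of $z$ in a null set, i.e. $z\in\Intmu(E)$. I expect the main difficulty to be precisely the null-set bookkeeping that pervades every step: because $E$, the lines, the set of monotone lines, and the ``good'' surface parameters are all defined only up to null sets, one must repeatedly invoke Fubini — in the guise of Lemmas~\ref{lem-aetriplemonotone} and~\ref{lem-transversefamily} — to verify that the bad configurations (non-monotone lines; broken curves whose crossing with a good $\Sigma_{\alpha}$ lands in its null part; lines through $p$ on which $x$ fails to lie in the measure-theoretic interior of $E\cap L_{x,v_1}$; curves missing every good surface) are jointly negligible, while still leaving enough good configurations for the submersion $\Ga$ to cover a full neighborhood of $z$ modulo a null set. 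Promoting ``generic in $\De^2$'' to ``generic in $\H$'' through the submersion $\Ga_x$ is exactly what Lemma~\ref{lem-transversefamily} was engineered to supply, and checking that the various maps genuinely define mutually transverse admissible families — routine — is where the explicit transversalities built into the scaffold are used.
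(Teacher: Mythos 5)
Your proposal follows the paper's proof of Proposition \ref{prop-2pointsmeasurable} in all essentials: the same scaffold of broken horizontal segments, Lemma \ref{lem-transversefamily} applied to the families $\{\Si_\al\}$ and $\{L_{x,v_1}\}$ to produce, for a.e.\ $(x,v_1)$, a point $w_1\in E\cap L_{x,v_1}$ near $q$ at which the pointed line is monotone (this is the content of the paper's Lemma \ref{lem-aetriplesigma}), the monotone-pointed-line bookkeeping to push first $x\exp v_1$ and then $\Ga_x(v_1,v_2)$ into $E$, and the submersion $\Ga_x$ to finish. One small repair: in the second-segment step, ``$x\exp v_1$ is a density point of $E$, hence of $E\cap L_{x\exp v_1,v_2}$ for a.e.\ $v_2$'' is not a valid inference for a fixed point. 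What you need, and what Lemma \ref{lem-aetriplemonotone} supplies for a.e.\ triple, is that $(L_{x\exp v_1,v_2},x\exp v_1)$ is a monotone pointed line; combined with $x\exp v_1\in\Intmu(E\cap L_{x,v_1})$ (hence $x\exp v_1\in E$), this places $x\exp v_1$ in $\Intmu(E\cap L_{x\exp v_1,v_2})$.

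The more serious issue is the endgame, which as written would fail. Lemma \ref{lem-transversefamily} only ever yields a \emph{positive-measure} conclusion, and positive measure of $\mathcal{G}\subset\Omega$ can at best show that every neighborhood of $z$ meets $E$ in positive measure, i.e.\ $z\in\spt(E)$ --- not $z\in\Intmu(E)$, which requires a neighborhood meeting $E^c$ in a \emph{null} set. You must retain the Fubini structure of the good set: for a.e.\ $x\in B_\eps(p)\cap E$, the conclusion $\Ga_x(v_1,v_2)\in E$ holds for a.e.\ $(v_1,v_2)$ in a ball $B\subset\De^2$ around $(\bar v,\bar v)$. Fix one such $x$ close enough to $p$ that the submersion $\Ga_x$ (Lemma \ref{lem-gapsubmersion}) maps $B$ onto a neighborhood $U$ of $z$; then for a.e.\ $u\in U$ the fiber $\Ga_x^{-1}(u)\cap B$ is not contained in the null set of bad parameters, so $u\in E$, whence $\mu(U\cap E^c)=0$ and $z\in\Intmu(E)$. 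This is exactly the ``reasoning from the proof of Proposition \ref{prop-2points}'' that the paper invokes at the corresponding point; with that substitution your argument coincides with the paper's.
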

\begin{proof}
(Compare the proof of Proposition \ref{prop-2points})
\quad Pick $z\in (p,q)\subset L$, where $z=p\exp 2\bar v$, 
$\bar v\in \De$.  Hence $z=\Ga_p(\bar v,\bar v)$.

Choose $\eps\in(0,\infty)$,
and let $\g$ be the set of triples 
$(x,v_1,v_2)\in \H\times\De\times\De$
such that $\max\{d^\H(x,p),\|v_1-\bar v\|,\|v_2-\bar v\|\}<\eps$.

Using the reasoning from the proof Proposition \ref{prop-2points},
to prove (1) of Proposition \ref{prop-2pointsmeasurable},
it suffices to show that when $\eps$ is sufficiently small,
for almost every $x\in B_\eps(p)\cap E$, the point
$\Ga_x(v_1,v_2)$ belongs to $E$.   To establish this, we
need the following:

\begin{lemma}
\label{lem-aetriplesigma}
For a.e. triple $(x,v_1,v_2)\in\g$, the following statements
hold:
\begin{enumerate}
\item The following pairs are monotone:
$(L_{x,v_1},x)$, $(L_{x,v_1},x\exp v_1)$,
$(L_{x\exp v_1,v_2},x\exp v_1)$, and 
$(L_{x\exp v_1,v_2},x\exp v_1\exp v_2)$.
\item There is a  point $w_1\in L_{x,v_1}\cap E$, 
 close
to $q$, such that the pair
$(L_{x,v_1},w_1)$ is monotone.
\item  There is a  point 
$w_2\in L_{x\exp v_1,v_2}\cap E$ close
to $q$, such that the pair
 $(L_{x\exp v_1,v_2},w_2)$ is monotone.
\end{enumerate}
\end{lemma}
\begin{proof}
The first assertion follows from Lemma 
\ref{lem-aetriplemonotone}, so we focus on (2) and (3).

Pick $\de\in (0,\infty)$. We may shrink the surfaces
$\{\Si_\al\}_{\al\in A}$ and choose a small neighborhood $B$ of $L$
in $\L(\H)$, such that every $L'\in B$ intersects every
surface $\Si_\al$ transversely in a single point lying
in $B_\de(q)$.  

By Lemma \ref{lem-transversefamily},
for a.e. $L'\in B$, the intersection $L'\cap E\cap B_\de(q)$
has positive measure in $L'$.  Therefore, if $\eps$
is sufficiently small, for a.e.
$v_1\in B_\eps(\bar v)$, the direction $v_1$ is monotone, and for
a.e. $x\in B_\eps(p)$ the pair $(L_{x,v_1},x)$
is monotone and the line $L_{x,v_1}$ intersects
 $E\cap B_\de(q)$ in a positive measure set.  It follows
that for some $w_1\in E\cap B_\de(q)\cap L_{x,v_1}$
the pair $(L_{x,v_1},w_1)$ is monotone. This implies that (2)
holds for a.e. triple $(x,v_1,v_2)$. 

By the same token, for $\eps'$ sufficiently
small,  there is a full measure subset
$M\subset  B_{\eps'}(p\exp(\bar v))$ such that for every $y\in M$ and a.e.
$v_2\in B_\eps(\bar v)$, there is a 
$w_2\in E\cap B_\de(q)\cap L_{y,v_2}$ such that 
$(L_{y,v_2},y)$ and $(L_{y,v_2},w_2)$ are both monotone.
The map $(x,v_1)\mapsto x\exp v_1$ being a submersion,
if $\eps$ is sufficiently small, we conclude that
for a.e. $x\in B_\eps(p)$, $v_1\in B_\eps(\bar v)$, the
point $x\exp v_1$ lies in $M$, and hence for a.e. 
$v_2\in B_\eps(\bar v)$ the desired point 
$w_2\in L_{x\exp v_1,v_2}$ exists.

\end{proof}
To prove (2) of Proposition \ref{prop-2pointsmeasurable},
we combine Lemma \ref{lem-aetriplesigma} and the argument
of (2) in Proposition \ref{prop-2points}.

\end{proof}

\bigskip
\bigskip
Step (1) of the outline
follows, by using Proposition \ref{prop-2pointsmeasurable} in place
of Proposition \ref{prop-2points}.

Steps (2), (3), and (5) then follow using essentially the same 
reasoning in the precisely monotone case.

To implement step (4) for monotone sets, it suffices to
produce an admissible family of surfaces $\{\Si_\al\}_{\al\in A}$
as in the statement Proposition \ref{prop-2pointsmeasurable}
(or the version with $E^c$ replacing $E$).
To that end we have the following:

\begin{lemma}
\label{lem-ael1l2}
For a.e. pair $(L_1,L_2)\in \L(\H)\times\L(\H)$,
a.e. line $L$ intersecting both $L_1$ and $L_2$
is monotone,  and the pairs $(L_i,L_i\cap L)$ and
$(L,L\cap L_i)$
are monotone for $i\in \{1,2\}$.
\end{lemma}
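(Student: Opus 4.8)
The plan is to recast the statement as an application of Fubini's theorem on a manifold of configurations, using the general fact that a smooth submersion pulls back null sets to null sets, together with the facts (established in the lemma following Definition~\ref{def-monotonethings}) that almost every direction in $\De$ --- hence, by Fubini over the fibration $\L(\H)\to{\mathcal P}(\De)$, almost every line --- is monotone, and that almost every pointed line is monotone.

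First I would introduce the configuration manifold
$$
\mathcal C \;=\;\{(L_1,L_2,L)\in \L(\H)^3 \mid L_1,L_2\text{ skew},\ L\neq L_i,\ L\cap L_i\neq\emptyset\ (i=1,2)\}\,,
$$
noting that for $L\neq L_i$ the intersection $L\cap L_i$ is a single point, which I denote $p_i$. By Lemma~\ref{lem-geometry}(B), the projection $\phi:\mathcal C\to\L(\H)\times\L(\H)$, $(L_1,L_2,L)\mapsto(L_1,L_2)$, is a smooth fibration over the open, full-measure set of skew pairs whose fiber over $(L_1,L_2)$ is the one-parameter family of horizontal lifts of the tangent lines of the associated hyperbola --- that is, precisely the set of lines meeting both $L_1$ and $L_2$, equipped with its natural smooth measure. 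Concretely I would parametrize $\mathcal C$, away from a lower-dimensional subset, by $L$ together with a point of $L$ and a direction determining $L_1$ and a point of $L$ and a direction determining $L_2$; this displays $\mathcal C$ as a seven-dimensional manifold carrying a smooth measure.

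Next I would consider the maps
$$
\psi:(L_1,L_2,L)\mapsto L,\qquad \sigma_i:(L_1,L_2,L)\mapsto(L,p_i),\qquad \tau_i:(L_1,L_2,L)\mapsto(L_i,p_i)\quad(i=1,2),
$$
the last four valued in the space of lines with a marked point, and check that each is a submersion. Granting this, since submersions pull back null sets to null sets, and since the non-monotone lines form a null subset of $\L(\H)$ while the non-monotone pointed lines form a null subset of the space of marked lines, the set $B\subset\mathcal C$ consisting of those $(L_1,L_2,L)$ for which $L$, or some $(L,p_i)$, or some $(L_i,p_i)$ fails to be monotone is a null subset of $\mathcal C$. (Monotonicity of $(L,p_i)$ already entails monotonicity of $L$ by Definition~\ref{def-monotonethings}, so $\psi$ is in fact redundant.) Applying Fubini's theorem to the fibration $\phi$ and the null set $B$, I would conclude that for almost every pair $(L_1,L_2)$ the slice $\phi^{-1}(L_1,L_2)$ meets $B$ in a null set; since that slice is exactly the set of lines through both $L_1$ and $L_2$, this is the assertion of the lemma for skew pairs. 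Pairs $(L_1,L_2)$ that are not skew form a null subset of $\L(\H)^2$, so ``almost every pair'' may be assumed skew.

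The step I expect to require the most care is verifying that $\sigma_i$ and $\tau_i$ are submersions --- i.e.\ that, in the configuration space, the marked lines $(L,p_i)$ and $(L_i,p_i)$ can be varied subject only to open constraints. This is a routine transversality and dimension computation using the description of lines and their incidences in $\H$ from Section~\ref{sec-prelim}, in particular Lemma~\ref{lem-geometry}; the remainder of the argument is soft measure theory.
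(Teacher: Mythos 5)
Your proof is correct and is essentially the paper's argument: the paper packages your configuration manifold $\mathcal C$ as the admissible families $M^i_{L_1,L_2}$ of pointed lines inside $\F$ (Definition \ref{def-admissiblefamily}), likewise defers the submersion check to the implicit function theorem, and then applies the same Fubini/null-set reasoning using that a.e.\ line and a.e.\ pointed line is monotone. If anything, your explicit inclusion of the maps $\tau_i$ handles the pairs $(L_i,L_i\cap L)$ slightly more completely than the paper's terse conclusion.
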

\begin{proof}
Consider the manifold $\F$ of pointed lines $(L,p)$, and the
two submersions $\pi_{\L}:\F\ra \L(\H)$ and $\pi_{\H}:\F\ra \H$,
where $\pi_{\L}(L,p)=L$ and $\pi_{\H}(L,p)=p$.  We know that
a.e. $(L,p)\in \F$ is monotone, because a.e. $L\in \L(\H)$
is monotone, and  a.e. pointed line in the fiber  $\pi_{\L}^{-1}(L)$
over a monotone line $L$, is monotone.

To any pair of skew lines $(L_1,L_2)$, we may associate two 
$1$-dimensional submanifolds 
$M^1_{L_1,L_2},M^2_{L_1,L_2}\subset\F$, namely $M^i_{L_1,L_2}$
is the set of pointed lines $(L,p)$ where $L$ intersects both 
$L_1$ and $L_2$, and $p=L\cap L_i$.  The implicit function
theorem implies that for $i\in \{1,2\}$, the family 
$$
\{M^i_{L_1,L_2}\mid (L_1,L_2)\;\mbox{are skew lines}\}
$$
is admissible in the sense of Definition 
\ref{def-admissiblefamily}.

It follows that for a.e. pair $(L_1,L_2)$ of skew lines,
a.e. pointed line $(L,p)$ on $M^i_{L_1,L_2}$ is monotone.   Since
a.e. line is monotone, the lemma follows.
\end{proof}

Using the lemma, we may imitate the construction of Lemma
\ref{lem-noth} using a family of pairs, in the measurable
setting. This yields  for a.e. $(L_1,L_2)$, open intervals
$(a_i,b_i)\subset L_i$ and smooth parametrizations 
$x_i:I\ra (a_i,b_i)$ as in Lemma \ref{lem-noth}, which 
vary measurably with $(L_1,L_2)$.  By Lusin's
theorem, after passing to a subset
of nonzero measure, we may arrange that the interval
$(a_i,b_i)$ and maps $x_i$ varying continuously with $(L_1,L_2)$.
Then the rest of the lemma may be implemented as in Lemma
\ref{lem-noth}.

\bigskip
\section{The proof  of a weak version of Theorem \ref{thm-maindiff}}
\label{sec-proofweak}

In this section we prove a weak version of the main theorem, which is
enough to imply the nonexistence of bi-Lipschitz embeddings $\H\ra L^1$.
We include this result here because it follows easily from the work
we have done so far, and avoids the harmonic analysis
in Section \ref{sec-injectivity}.

\begin{theorem}
\label{thm-weakdiff}
Let $Z$ be the infinitesimal generator of $\cent(\H)$, so
$\cent(\H)=\{\exp\,tZ\mid t\in\R\}$.
If $f:\H\ra L^1$ is a Lipschitz map, then for a full
measure set of points $p\in \H$
\begin{equation}
\label{eqn-weakdiff}
\liminf_{t\ra 0}\;\frac{d(f(p),f(p\exp tZ))}{d(p,p\exp tZ)} \,=\,0\,.
\end{equation}
\end{theorem}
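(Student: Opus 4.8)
The idea is to show that if \eqref{eqn-weakdiff} fails on a positive measure set, then one can extract a blow-up limit $f_\infty:\H\ra L^1$ which is geodesic along lines but whose associated cut measure cannot be supported on monotone sets, contradicting Theorem \ref{thm-monotonehalfspace}. First I would let $B\subset\H$ be the set of $p$ where the $\liminf$ in \eqref{eqn-weakdiff} is positive, and suppose for contradiction that $\l(B)>0$. By Theorem \ref{thm-pauls}, for almost every $p\in\H$ the rescalings $\rho_{p,\la}$ converge on horizontal pairs to a left-invariant Carnot pseudo-distance $\al$ induced by a Finsler semi-norm on $\De$; intersecting with $B$, pick such a density point $p\in B$. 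The point is that $\al$, being induced by a semi-norm on the \emph{horizontal} space, vanishes on any pair with the same $\pi$-image, in particular on pairs $(z, z\exp tZ)$; but along the center the function $t\mapsto d(p, p\exp tZ)$ scales like $\sqrt{t}$ by \eqref{eqn-verticaldistance}, so the failure of \eqref{eqn-weakdiff} at $p$ is precisely the statement that $\rho_{p,\la}$ does \emph{not} converge to $\al$ on all of $\H\times\H$.

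Next I would use $\rho_{p,\la}\le\Lip(f)\,d$ together with Arzel\`a--Ascoli to pass to a subsequence $\la_k\ra 0$ with $\rho_{p,\la_k}\ra\rho_\infty$ uniformly on compacta, where $\rho_\infty=\al$ on horizontal pairs but $\rho_\infty(\bar z_1,\bar z_2)>0$ for some pair with $\pi(\bar z_1)=\pi(\bar z_2)$ (obtained by choosing $\bar z_1=p$, $\bar z_2=p\exp tZ$ at an appropriate scale and diagonalizing). By Corollary \ref{cor-paulspluskakutani}, $\rho_\infty$ is the pullback pseudo-distance of a Lipschitz map $f_\infty:\H\ra L^1$, so by Theorem \ref{thm-cutmetricproperties} it has a cut measure representation $\rho_\infty=\int_{\cut(\H)}d_E\,d\Si(E)$. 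Since $\rho_\infty$ restricted to each line $L$ equals $\al|_L$, which is a constant-speed geodesic parametrization (the semi-norm $\|\cdot\|_x$ on $\De$ gives the speed), the map $f_\infty$ is geodesic along almost every line; hence by Proposition \ref{prop-geodesicmonotone} (in the Carnot-group form, i.e. the Corollary following it) $\Si$-a.e.\ cut $E$ is monotone.

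Now I invoke Theorem \ref{thm-monotonehalfspace}: $\Si$-a.e.\ $E$ is, modulo null sets, $\emptyset$, $\H$, or a half-space. The half-spaces with nonzero $d_E$ are vertical or horizontal. For a \emph{vertical} half-space $E=\pi^{-1}(H_0)$ one has $d_E(z_1,z_2)=d_{H_0}(\pi(z_1),\pi(z_2))$, so the contribution depends only on $\pi$-images and vanishes on fibers of $\pi$. The only remaining issue is horizontal half-spaces: here I would observe that a \emph{horizontal} plane through a point $g$ is a single point together with the lines through it, hence has measure zero, so its two ``sides'' are cuts equivalent to $\emptyset$ and $\H$ respectively; thus horizontal half-spaces contribute nothing to $d_\Si$ modulo null sets, which is exactly the simplification that lets us avoid Section \ref{sec-injectivity}. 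Therefore $\rho_\infty(z_1,z_2)=d_\Si(z_1,z_2)$ depends only on $\pi(z_1),\pi(z_2)$ and in particular $\rho_\infty(\bar z_1,\bar z_2)=0$ whenever $\pi(\bar z_1)=\pi(\bar z_2)$, contradicting our choice of $\bar z_1,\bar z_2$. Hence $\l(B)=0$, which is \eqref{eqn-weakdiff}.

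\textbf{Main obstacle.} The delicate point is the extraction of the blow-up limit that simultaneously (a) agrees with $\al$ on horizontal pairs and (b) genuinely detects the non-vanishing along the center; since the vertical distance scales as $\sqrt{\la}$ rather than $\la$, one must be careful that after the linear rescaling by $1/\la$ the center-direction discrepancy does not simply get crushed to zero in the limit. The correct bookkeeping is that the hypothesis ``$\liminf>0$'' gives, at a well-chosen scale depending on $\la_k$, a pair at bounded distance with $\rho_{p,\la_k}$ bounded below; passing to the ultralimit/Arzel\`a--Ascoli limit preserves this lower bound, producing the required $\bar z_1,\bar z_2$. Making this selection of scales precise, and checking it is compatible with the a.e.\ statement from Pauls' theorem (so that the density point argument within $B$ really applies), is where the real work lies; everything after that is an assembly of the already-proved structural results.
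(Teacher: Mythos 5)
Your overall architecture (countable additivity / density point, blow-up via Arzel\`a--Ascoli, Corollary \ref{cor-paulspluskakutani}, monotone cuts, Theorem \ref{thm-monotonehalfspace}) matches the paper's, but your endgame contains a genuine error. You claim that because a horizontal plane $P$ (the union of the lines through a point $g$) has measure zero in $\H$, its two ``sides'' are cuts equivalent to $\emptyset$ and $\H$, so that horizontal half-spaces contribute nothing to $d_\Si$. This is a non sequitur: $P$ is a $2$-dimensional surface whose complement has two open components, each of infinite positive measure, and the elementary cut metric $d_E$ of such a component is far from trivial. Ruling out horizontal half-spaces in the cut measure is exactly the hard point that the paper's Section \ref{sec-injectivity} (the Strichartz harmonic analysis, via Corollary \ref{cor-transinvzero} and Theorem \ref{thm-standardcutmeasure}) is needed for; it cannot be dismissed by a measure-zero observation. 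As written, your argument that $\rho_\infty$ vanishes on fibers of $\pi$ therefore does not go through.

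The fix --- and the paper's actual route for this weak theorem --- is to not attempt to kill the horizontal half-spaces at all. Instead, observe that \emph{every} half-space, vertical or horizontal, intersects each fiber $\pi^{-1}(z)$ in a monotone subset of that fiber (a vertical half-space contains the fiber or misses it; a horizontal plane meets the fiber in exactly one point, so a horizontal half-space meets it in a ray). Hence for a.e.\ $z$ the restriction of $\Si$ to the fiber is supported on monotone cuts, which gives additivity of $\rho_\infty$ along the fiber: $\rho_\infty(x,x\exp nZ)=n\,\rho_\infty(x,x\exp Z)$. Combined with the uniform lower bound $\rho_\infty(x,x\exp tZ)\ge\la\,d(x,x\exp tZ)$ inherited from the density point of $Y$ (your ``main obstacle'' paragraph gestures at this; the paper makes it precise by first using countable additivity to get uniform constants $\la,r$ on a positive-measure set), this forces $\rho_\infty(x,x\exp nZ)\ge n\la\,d(x,x\exp Z)$, which grows linearly in $n$, while the Lipschitz bound only permits growth like $d(x,x\exp nZ)\simeq\sqrt{n}$ by \eqref{eqn-verticaldistance}. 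That contradiction closes the proof without any appeal to the injectivity results.
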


\begin{proof}
Suppose  the theorem were false.   Then the set of points
$x\in \H$ such that 
$$
\liminf_{t\ra 0}\;\frac{d(f(x),f(x\exp tZ))}{d(x,x\exp tZ)} \,>\,0\,
$$
is measurable, and has positive measure. 
By countable additivity, it follows that there is a measurable set $Y$ of positive 
measure,
and constants $\la\in (0,1)$, $r\in (0,1)$, such that
if $p\in Y$ and $q=p\,\exp(tZ)$ for some $|t|<r$, then 
$d(f(p),f(q))\geq \la d(p,q)$.  

Let $p$
be a density  point of $Y$, where in addition the conclusion
of Theorem \ref{thm-pauls} holds.  We now blow up the pullback distance at $p$.
Take a sequence $\la_k\ra 0$,  define $\rho_k:\H\times\H\ra [0,\infty)$
by 
$$
\rho_k=  \frac{1}{\la_k}\,\left(  f\circ\ell_p\circ s_{\la_k} \right)^*d_{L^1}
\,,
$$
where $\ell_p:G\ra G$ is left translation by $p$, and $s_{\la_k}:G\ra G$
is the automorphism which scales by $\la_k$.
Since $f$ is Lipschitz, for some $C\in (0,\infty)$
we get $\rho_k\leq Cd$, and therefore by Arzela-Ascoli we may assume,
after passing to a subsequence if necessary,  that the sequence
$\{\rho_k\}$ converges uniformly on compact subsets of $G\times G$ to a
pseudo-distance $\rho_\infty$.   
Since $p$ was a density point of $Y$, it follows
that for any $x\in G$, $t\in \R$, 
\begin{equation}
\label{eqn-boundedbelow}
\rho_{\infty}(x,x\,\exp\,tZ)\geq \la\,d(x,x\,\exp\,tZ)\,.
\end{equation}

By Corollary \ref{cor-paulspluskakutani}, the pseudo-distance $\rho_\infty$
is induced by a map $f_\om:\H\ra L^1$, which by the choice of $p$, restricts
to a geodesic map on every line $L\in \L(\H)$.  If $\Si$ is the cut measure
associated with $f_{\om}$, then Proposition \ref{prop-geodesicmonotone}
implies that $\Si$-a.e. cut $E\in\cut(\H)$ is monotone; then 
Theorem \ref{thm-monotonehalfspace} gives that $\Si$ a.e. $E\in \cut(\H)$
is a half-space.

For almost every $z\in \R^2$, the restriction of the cut measure $\Si$ to the
fiber $\pi^{-1}(z)$ is well-defined, and supported on monotone cuts (since
the intersection of a fiber with a half-space is monotone).  Therefore
if $x_1,x_2,x_3\in \pi^{-1}(z)$ are in linear order, then 
$\rho_{\infty}(x_1,x_3)=\rho_{\infty}(x_1,x_2)+\rho_{\infty}(x_2,x_3)$.
Combining this with (\ref{eqn-boundedbelow}), for $n\in \N$
we get 
$$
\rho_{\infty}(x,x\,\exp nZ)=n\rho_{\infty}(x,x\,\exp Z)\geq n\la\,d(x,x\,\exp Z)\,.
$$
  This contradicts
the Lipschitz condition, since $d(x,x\exp nZ)\simeq \sqrt{n}$, see 
(\ref{eqn-verticaldistance}).

\end{proof}

\section{Uniqueness of cut measures}
\label{sec-injectivity}

Our main goal in this section is to show that under appropriate conditions, there
is a unique cut measure inducing a given cut metric.  Since the assignment $\Si\mapsto 
d_{\Si}$ is linear, it is natural to investigate injectivity in a linear framework,
and for this reason we will work with signed measures in this section.

\subsection*{The setup}
Throughout this section, $\Si$ will be a signed cut measure on $\H$
 supported on half-spaces. (Recall from 
Subsection \ref{subsec-cutmetrics} that the definitions of cut measure
and cut metric adapt directly to signed measures.)  We let
$d_{\Si}^{\horiz}$ denote the restriction of the cut metric $d_{\Si}$
to the set of horizontal pairs $\horiz(\H)\subset\H\times\H$.  
Let $\Si=\Si_+-\Si_-$
be the decomposition of $\Si$ into its positive and negative parts,
and let $|\Si|=\Si_++\Si_-$ be the absolute value of $\Si$.
 Since opposite half-spaces yield the same elementary cut metric
up to sets of measure zero, we may symmetrize $\Si$ so that it
is invariant under interchange of opposite components, without affecting the
associated cut metric $d_{\Si}$.  An alternative way to view this is to 
pushforward $\Si$ under the $2$-to-$1$ map $\hs(\H)\ra\P$ from the space of half-spaces
to planes, which sends each connected component of $\H\setminus P$ to $P$.
This pushforward operation induces a bijection between symmetrized cut measures
and measures on $\P$. 
We will often find it more convenient to work with measures on $\P$ rather than
symmetrized cut measures.

\bigskip
\begin{definition}
Let $\Si$ be a signed cut measure supported on half-spaces.
Then $\Si$ is
 {\bf Lipschitz} if there is a constant $C\in [0,\infty)$ such that
$d_{|\Si|}\leq C\,d$.  The {\bf Lipschitz constant of $\Si$}
is the infimum $\Lip(\Si)$ of such constants $C$.
\end{definition}
Note that the inequality is to be interpreted in accordance with the definition
of $d_{\Si}$, i.e. there should be a full measure subset $Z\subset\H$ such that
$d_{\Si}(x_1,x_2)\leq C\,d(x_1,x_2)$ for every $x_1,\,x_2\in Z$.

Our main objective in this section is:
\begin{theorem}
\label{thm-maininjectivity}
The linear map $\Si\mapsto d_{\Si}^{\horiz}$ is injective on symmetrized Lipschitz
signed cut measures supported on half-spaces.
\end{theorem}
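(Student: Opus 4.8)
The plan is to reduce injectivity of the map $\Si \mapsto d_\Si^{\horiz}$ to the injectivity of a convolution operator on $\H$ and then appeal to harmonic analysis on the Heisenberg group. First I would pass, via the symmetrization discussed in the setup, to a (signed) measure on the space $\P$ of planes rather than on half-spaces, so that the hypothesis becomes: $\Si$ is a signed measure on $\P$, the cut metric $d_\Si$ is dominated by $C\,d$, and $d_\Si^{\horiz} \equiv 0$; the goal is $\Si = 0$. Since $\P$ decomposes as vertical planes $\pi^{-1}(\ell)$ (indexed by lines $\ell \subset \R^2$) together with horizontal planes (indexed by points $g \in \H$), I would write $\Si = \Si^v + \Si^h$ accordingly. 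The first observation is that the horizontal-pair cut metric $d_\Si^{\horiz}$ sees a plane $P$ only through how often $P$ separates the two endpoints of a horizontal segment, and for a horizontal line $L$ tangent to direction $v$, a horizontal plane meets $L$ in at most one point generically while a vertical plane $\pi^{-1}(\ell)$ meets $L$ unless $\pi(L) \parallel \ell$. So along lines in a fixed direction $v$, $d_\Si^{\horiz}$ restricted to that direction is built from the "profile" $t \mapsto$ (signed mass of planes crossing the point at parameter $t$), which is essentially a one-dimensional derivative/primitive relationship: vanishing of $d_\Si^{\horiz}$ along all lines in direction $v$ forces the corresponding directional marginal of $\Si$ to be zero.

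The heart of the matter is therefore to combine these directional vanishing statements over all horizontal directions $v \in \De$ and conclude $\Si$ itself vanishes. Concretely, I expect that integrating the elementary cut metrics $d_E$ over horizontal pairs and differentiating produces, for each direction $v$, a function on $\H$ (or on $\R^2$ after projecting) which is a convolution of the measure $\Si$ (pushed to an appropriate lower-dimensional parameter space) with a fixed kernel depending on $v$ — geometrically this kernel records, for a fixed $v$, which planes separate two nearby points of a $v$-line, and the relevant kernels turn out to be (Fourier transforms of) the singular kernels appearing in Radon-transform-type inversion on $\H$. The statement $d_\Si^{\horiz} = 0$ says all of these convolutions vanish simultaneously; injectivity then amounts to showing that the joint kernel of this family of convolution operators is trivial. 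This is exactly where the harmonic analysis enters: taking Fourier transforms (in the central variable, and then in the $\R^2$ directions), the convolution operators become multiplication operators, and one must show the symbols do not have a common zero set of positive measure. The relevant non-vanishing of the symbols — essentially a statement that certain Bessel-type or oscillatory integrals attached to the sub-Riemannian structure have zeros only on a null set — is what I would extract from the results of Strichartz \cite{strichartz} cited in the introduction.

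The main obstacle, and the step I would budget the most care for, is the bookkeeping that turns "$d_\Si^{\horiz} = 0$" into a clean convolution equation: one has to handle the two very different geometric behaviors of vertical versus horizontal planes against horizontal lines uniformly, deal with the fact that $d_\Si$ is only defined up to null sets and only dominated (not equated) to $C\,d$ — which is what legitimizes the differentiations in the line parameter and the Fourier manipulations — and keep track of the symmetrization so that opposite half-spaces are not double-counted. A secondary technical point is that the measure space underlying $L^1$ need not be $\sigma$-finite, but since we have reduced to cut measures on the (nice, finite-dimensional) space $\P$ with the Lipschitz bound, this does not actually intervene here. Once the convolution identity is in hand, I would first dispose of $\Si^h$ (horizontal planes contribute a lower-order, essentially pointwise term that a separate direct argument kills — a horizontal plane is "small" and monotone on almost every line, so its elementary cut metric restricted to horizontal pairs is controlled and can only cancel against other horizontal planes, forcing $\Si^h$ to vanish by a one-dimensional argument), and then apply the injectivity of the convolution operator from \cite{strichartz} to conclude $\Si^v = 0$ as well, completing the proof.
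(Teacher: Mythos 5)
Your high-level strategy -- symmetrize to a signed measure on planes, split into vertical and horizontal parts, convert the vanishing of $d_{\Si}^{\horiz}$ into convolution identities, and invoke Strichartz -- is the same general route the paper takes, but the architecture is off in a way that matters. First, you have the roles of the two pieces reversed. In the paper the Heisenberg-group harmonic analysis from \cite{strichartz} is needed precisely for the \emph{horizontal} cut measures: writing $\Si_h=u\,\l$ on the space of horizontal planes, Lemma \ref{lem-distpx1x2} shows $d_{\Si_h}(x_1,x_2)=\Si_h(P_{x_1,x_2})$ for horizontal pairs, and differentiating along the line gives $u*K=0$ with $K$ the kernel $\phi\mapsto\int_{\R^2}(\phi\circ s_e)\,|x|\,d\l$; injectivity of $*K$ is what Strichartz's Laguerre calculus delivers. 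The \emph{vertical} part is the elementary one, reducing to convolution with $|\sin\theta|$ on $\R P^1$. Your plan to ``dispose of $\Si^h$'' first by a one-dimensional, essentially pointwise argument would fail: a horizontal plane $P_x$ meets a generic line transversally in a single point, so its elementary cut metric on horizontal pairs is not lower-order, and the contributions of different horizontal planes genuinely superpose along every line -- that superposition is exactly the convolution that requires the hard analysis.

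Second, you are missing the decoupling device that lets one treat $\Si_h$ and $\Si_v$ separately at all. Since vertical half-spaces are saturated under $\pi$, the metric $d_{\Si_v}^{\horiz}$ is automatically invariant under translation by the center, so $d_{\Si}^{\horiz}=0$ forces $d_{\Si_h}^{\horiz}$ to be centrally invariant (not zero). One therefore needs the stronger statement (Corollary \ref{cor-transinvzero}): a Lipschitz horizontal cut measure whose horizontal cut metric is invariant under the center must vanish. The paper proves this by applying the Strichartz injectivity to the differences $\Si_h-(\ell_g)_*\Si_h$ for central $g$, concluding that $\Si_h$ itself is invariant under $\exp\R Z$, and then contradicting the finiteness of $\pi_*\Si_h$ on strips (Lemma \ref{lem-lipest2}). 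Without this step -- or some substitute for it -- your ``joint kernel of a family of convolution operators'' must rule out cancellation between the vertical and horizontal families simultaneously, and the proposal gives no mechanism for doing so.
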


 Let $\Si=\Si_v+\Si_h$ be the decomposition into
the parts supported on vertical
and horizontal half-spaces, respectively.   To simply terminology slightly,
we will use {\bf horizontal cut measure} (respectively {\bf vertical cut measure}
to refer to a measure supported on horizontal (resp. vertical) half-spaces.
We will first treat the injectivity question for the restricted operators
$\Si_v\mapsto d_{\Si_v}^{\horiz}$ and $\Si_h\mapsto d_{\Si_h}^
{\horiz}$, before demonstrating
injectivity in the general case.

\bigskip\bigskip
\subsection*{Estimates on horizontal cut measures}
We now fix a symmetrized horizontal cut measure $\Si$,
i.e. $\Si$ is supported on horizontal half-spaces.  
We will view $\Si$ as a Radon measure on the manifold $\P_h$
of horizontal planes, which we identify with $\H$  by the diffeomorphism
$\H\ra\P_h$ which sends $x\in\H$ to the unique horizontal plane centered at 
$x$.

From now until Theorem \ref{thm-horizontalinjectivity}   below, we will assume that $\Si$ is
absolutely continuous with respect to $\l$, so $\Si=u\,\l$ where
$u:\H\ra\R$ is a locally integrable function.

For $x\in\H$, let $P_x\subset\H$ denote the horizontal plane centered at
$x$.  If $x_1$ and $x_2$ are distinct  points lying on a line $L\in \L(\H)$, 
we define $P_{x_1,x_2}$ to be the union of the horizontal planes $P_x$, where
$x$ ranges over the interval $(x_1,x_2)\subset L$.  Since $x\in P_y$ if and 
only if $y\in P_x$, we have 
$P_{x_1,x_2}\;=\;\{y\in\H\mid P_y\cap (x_1,x_2)\neq\emptyset\}$.
The complement of the union 
 $P_{x_1}\cup P_{x_2}$ has  four wedge-shaped connected 
components, precisely two of which are ``horizontal'', in the sense that
they intersect each coset of the center in an interval.
 Neglecting the intersection with
$L$, the set $P_{x_1,x_2}$ coincides with the union of these two horizontal
components.

\begin{lemma}
\label{lem-distpx1x2} 
There is a full measure subset $Z\subset\H$
 such that
if  $(x_1,\,x_2)\in Z\times Z$ is a horizontal pair, then
$$
d_{\Si}(x_1,x_2)\,=\,\Si(P_{x_1,x_2})\,=\,\int_{P_{x_1,x_2}}\;u\,d\l\,\leq\,
\Lip(\Si)\,d(x_1,x_2)\,.
$$
\end{lemma}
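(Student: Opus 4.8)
The plan is to evaluate $d_\Si(x_1,x_2)$ directly from the superposition formula (\ref{eqn-superposition}), by identifying which horizontal half-spaces separate $x_1$ from $x_2$ when $(x_1,x_2)$ is a horizontal pair lying on a line $L$. Viewing the symmetrized measure $\Si$ as $u\,\l$ on $\P_h\simeq\H$, the elementary cut metric $d_E(x_1,x_2)$ of a horizontal half-space $E$ bounded by $P_y$ equals $1$ precisely when $P_y$ separates $x_1$ from $x_2$, so (\ref{eqn-superposition}) becomes
\[
d_\Si(x_1,x_2)=\int_{\{\,y\in\H\;:\;P_y\text{ separates }x_1\text{ from }x_2\,\}}u\,d\l\,.
\]
Hence the whole statement reduces to the geometric identity: for $\l$-a.e.\ $y\in\H$, the plane $P_y$ separates $x_1$ from $x_2$ if and only if $y\in P_{x_1,x_2}$.

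To prove this identity I would use the symmetry $z\in P_y\iff y\in P_z$ together with the area description of lines from Lemma \ref{lem-enclosedarea}. Reducing by a symmetry of $\H$ (a left translation composed with a rotation of the horizontal space) to the case $L=\exp(\R X)$, a direct computation in coordinates shows that for $\pi(y)\notin\pi(L)$ the horizontal plane $P_y$ is a graph over $\R^2$ which meets $L$ in a single transversal point $z_0$, and that $z_0$ lies strictly between $x_1$ and $x_2$ exactly when $P_y$ separates them. On the other hand $z_0\in P_y$ is the same as $y\in P_{z_0}$, so ``$z_0\in(x_1,x_2)$'' is exactly the condition $y\in\bigcup_{z\in(x_1,x_2)}P_z=P_{x_1,x_2}$; conversely any $y\in P_{x_1,x_2}$ with $\pi(y)\notin\pi(L)$ has its unique intersection point with $L$ inside $(x_1,x_2)$. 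The exceptional set $\{y:\pi(y)\in\pi(L)\}$ is a vertical plane, hence $\l$-null; this is essentially the ``two horizontal wedges'' picture recorded just before the statement of the lemma.

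Next I would carry out the measure-theoretic bookkeeping that produces $Z$. The cut metric (\ref{eqn-superposition}) is computed through a tautological function $\Phi$ specified only off a $(\Si\times\l)$-null set, and the identity above holds only for $\l$-a.e.\ $y$, hence (since $\Si=u\,\l\ll\l$ and likewise $|\Si|=|u|\,\l\ll\l$) for $|\Si|$-a.e.\ plane $P_y$. Applying Fubini's theorem to $\Si\times\l$ and to $|\Si|\times\l$ I would extract a full $\l$-measure set $Z\subset\H$ on which, for all $x_1,x_2\in Z$: $\Phi(E,x_i)=\chi_E(x_i)$ for $|\Si|$-a.e.\ $E$; neither $x_1$ nor $x_2$ lies in the $|\Si|$-null exceptional locus attached via the line through them to the geometric identity; and $d_{|\Si|}(x_1,x_2)\le\Lip(\Si)\,d(x_1,x_2)$. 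For such $x_1,x_2$ forming a horizontal pair, the previous paragraphs give $d_\Si(x_1,x_2)=\Si(P_{x_1,x_2})=\int_{P_{x_1,x_2}}u\,d\l$, while the same identification applied to $|\Si|$ gives $d_{|\Si|}(x_1,x_2)=\int_{P_{x_1,x_2}}|u|\,d\l$, whence
\[
\int_{P_{x_1,x_2}}u\,d\l\;\le\;\int_{P_{x_1,x_2}}|u|\,d\l\;=\;d_{|\Si|}(x_1,x_2)\;\le\;\Lip(\Si)\,d(x_1,x_2)\,.
\]

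The step I expect to be the main obstacle is the geometric identity of the second paragraph: pinning down exactly which horizontal planes separate the two endpoints of a horizontal segment, i.e.\ turning the wedge picture into a clean statement via the holonomy/area description of lines in $\H$, and checking that the exceptional configurations form an $\l$-null set. The remaining steps are routine Fubini arguments together with the definition of $\Lip(\Si)$.
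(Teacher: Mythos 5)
Your proposal is correct and follows essentially the same route as the paper: compute $d_\Si(x_1,x_2)$ from the superposition formula, observe that a horizontal half-space bounded by $P_y$ separates $x_1$ from $x_2$ precisely when $P_y$ meets the open segment $(x_1,x_2)$ (which is the definition of $y\in P_{x_1,x_2}$), and discard the $|\Si|$-null set of planes through $x_1$, $x_2$, or tangent to $L$. You simply spell out the transversality and exceptional-set details that the paper compresses into a single sentence.
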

\begin{proof}
From the definition of the cut metric, the distance $d_{\Si}(x_1,x_2)$
is given by
$$
\int_{\cut(\H)}\;|\chi_E(x_1)-\chi_E(x_2)|\,d\Si(E)\,,
$$
when $x_1$ and $x_2$ belong to a full measure subset of $\H$.   If 
$P$ is a plane disjoint from a horizontal pair $(x_1,x_2)$, and $E$ is a 
half-space component 
of $\H\setminus P$, then $|\chi_E(x_1)-\chi_E(x_2)|$
is nonzero precisely when $P\cap (x_1,x_2)\neq\emptyset$.   Since the
set of planes passing through $x_1$ or $x_2$ has $|\Si|$-measure zero, it follows that
$
d_{\Si}(x_1,x_2)\;=\;\Si(P_{x_1,x_2})\,.
$
\end{proof}

\bigskip
For $x\in\H$, let $s_x:\R^2\ra\H$ be the inverse of $\pi\restr_{P_x}$, and let
$d_{\pi(x)}:\R^2\ra[0,\infty)$ be the distance from $\pi(x)\in\R^2$. 

\bigskip
\begin{lemma}
\label{lem-lipest1}
Suppose in addition that $\Si$ is Lipschitz.
There is a constant $C$ depending only on $\H$, with the following property.
\begin{enumerate}
\item For almost every $x\in \H$, and for almost every line $L\in\L(\H)$ passing
through $x$, the composition $u\circ s_x:\R^2\ra\R$ is measurable.  Moreover,
if $d_{\pi(L)}:\R^2\ra[0,\infty)$ is the distance from $\pi(L)$, then  
$$
\int_{\R^2}\,|u\circ s_x|\,d_{\pi(L)}\,d\l\,\leq\,\Lip(\Si)\,.
$$
\item For almost every $x\in \H$,  the composition $u\circ s_x:\R^2\ra\R$ is measurable,  
$$
\int_{\R^2}\,|u\circ s_x|\,d_{\pi(x)}\,d\l\,\leq\,C\,\Lip(\Si)\,.
$$
\item If in addition  $d_{\Si}\,=\,0$, then for almost every $x\in\H$ and almost every
line $L$ passing through $x$, 
$$
\int_{\R^2}\,(u\circ s_x)\,d_{\pi(L)}\,d\l\,=\,\int_{\R^2}\,(u\circ s_x)\,d_{\pi(x)}\,d\l
\,=\,0\,.
$$
\end{enumerate}

\end{lemma}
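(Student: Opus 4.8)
The plan is to unwind the geometry of the region $P_{x_1,x_2}$ appearing in Lemma \ref{lem-distpx1x2} and rewrite the bound $\Si(P_{x_1,x_2})\leq \Lip(\Si)\,d(x_1,x_2)$ as a weighted integral of $|u|$ over $\R^2$. The starting point is the observation that, under the identification of the space $\P_h$ of horizontal planes with $\H$ itself (sending $x$ to the plane $P_x$ centered at $x$), the relation ``$y\in P_x$'' is symmetric, so $P_{x_1,x_2}=\{y\mid P_y\cap(x_1,x_2)\neq\emptyset\}$. When $x_1,x_2$ lie on a line $L$, projecting by $\pi$ turns this condition into a statement about $\R^2$: a plane $P_y$ meets the segment $(x_1,x_2)\subset L$ precisely when the line $\pi(L)$ is a secant/tangent of a suitable pencil of lines through $\pi(y)$, and — using Lemma \ref{lem-enclosedarea} to track the central coordinate — the $\l$-measure of the slice of $P_{x_1,x_2}$ above a point $\bar y\in\R^2\setminus\pi(L)$ is comparable (up to a universal constant, with exact equality in the right normalization) to $\dist(\bar y,\pi(L))$ times the length of the projected segment. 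Thus, parametrizing $L$ so that $x=s_x\circ\pi\restr_{P_x}$ makes sense, one gets
\begin{equation}
\label{eqn-slicedintegral}
\Si(P_{x_1,x_2})=\int_{\R^2}(u\circ s_x)\,w_L\,d\l\,,
\end{equation}
where $w_L(\bar y)$ is a weight supported off $\pi(L)$ and proportional to $\dist(\bar y,\pi(L))$ on the horizontal components, times $d(x_1,x_2)$ (in a suitable direction). Letting $x_1,x_2\to\pm\infty$ along $L$, or rather applying this on an exhaustion and invoking Fatou together with the Lipschitz bound from Lemma \ref{lem-distpx1x2}, yields part (1): $\int_{\R^2}|u\circ s_x|\,d_{\pi(L)}\,d\l\leq\Lip(\Si)$, valid for a.e.\ $x$ and a.e.\ line $L$ through $x$.

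For part (2), I would pass from lines through $\pi(x)$ to the point $\pi(x)$ itself. Given the family of bounds $\int|u\circ s_x|\,d_{\pi(L)}\,d\l\leq\Lip(\Si)$ over the pencil of lines $L$ through $x$, one averages over the directions $[v]\in\P(\De)$ of $L$. The elementary fact that $\int_{\P(\De)}\dist(\bar y,\pi(L_{[v]}))\,d[v]\geq c\,\dist(\bar y,\pi(x))$ for a universal constant $c>0$ (the average distance to lines through a point dominates the distance to the point) then gives $\int_{\R^2}|u\circ s_x|\,d_{\pi(x)}\,d\l\leq C\,\Lip(\Si)$ after integrating the pencil bound against $d[v]$ and using Fubini/Tonelli. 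One must be slightly careful that $s_x$ depends on $x$ but not on $L$, so the integrand $|u\circ s_x|$ is fixed while only the weight $d_{\pi(L)}$ varies — this is exactly what makes the averaging legitimate.

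Part (3) is the signed refinement. If $d_\Si=0$ on horizontal pairs, then by Lemma \ref{lem-distpx1x2}, $\Si(P_{x_1,x_2})=0$ for all horizontal pairs $(x_1,x_2)$ in a full-measure set, so the right-hand side of \eqref{eqn-slicedintegral} vanishes for all choices of sub-segment of $L$. Since the weights $w_L$ arising from sub-segments of a fixed $L$ span (after the limiting procedure) the single weight $d_{\pi(L)}$ up to scaling, one concludes $\int_{\R^2}(u\circ s_x)\,d_{\pi(L)}\,d\l=0$; averaging over the pencil of lines through $x$ as in (2) then gives $\int_{\R^2}(u\circ s_x)\,d_{\pi(x)}\,d\l=0$ as well. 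The absolute integrability needed to justify all sign manipulations is supplied by parts (1) and (2).

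I expect the main obstacle to be the first step: carefully identifying the region $P_{x_1,x_2}$ and computing (or at least bounding above and below by universal constants) the $\l$-measure of its vertical slices in terms of $\dist(\cdot,\pi(L))$ and the length of the projected segment. This is where Lemma \ref{lem-enclosedarea} and the explicit description of the four wedge-shaped components of $\H\setminus(P_{x_1}\cup P_{x_2})$ from the discussion preceding Lemma \ref{lem-distpx1x2} do the real work; once the geometry is pinned down, parts (1)--(3) follow by Fatou, Fubini, and the elementary averaging inequality over $\P(\De)$.
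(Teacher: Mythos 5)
There is a genuine gap in the first step, and it propagates. Your key identity $\Si(P_{x_1,x_2})=\int_{\R^2}(u\circ s_x)\,w_L\,d\l$ is false for a finite segment $(x_1,x_2)$: the region $P_{x_1,x_2}$ is foliated by the horizontal planes $P_{\ga(t)}$ as $\ga(t)$ runs over $(x_1,x_2)\subset L$, and $u$ varies across this foliation, so knowing the $\l$-measure of each vertical slice (which is indeed $d_{\pi(L)}(\bar y)\cdot|t_1-t_2|$) does not let you replace the integral of $u$ over the slice by the single value $(u\circ s_x)(\bar y)$ taken on the one leaf $P_x$. The correct statement is the change-of-variables formula for the map $S(y,t)=s_{\ga(t)}(y)$, whose Jacobian is $d_{\pi(L)}(y)$ by Lemma \ref{lem-enclosedarea}:
$$
\Si\bigl(P_{\ga(t_1),\ga(t_2)}\bigr)\,=\,\int_{t_1}^{t_2}\Bigl(\int_{\R^2}(u\circ s_{\ga(t)})(y)\,d_{\pi(L)}(y)\,d\l(y)\Bigr)\,d\l(t)\,,
$$
with the inner integrand depending on $t$. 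Your identity is only the infinitesimal version of this, valid for a.e.\ $t$ after dividing by $|t_1-t_2|$ and shrinking the segment to a point.

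This also means your limiting procedure goes in the wrong direction: letting $x_1,x_2\to\pm\infty$ (or exhausting $L$ and applying Fatou) makes the Lipschitz bound $\Lip(\Si)\,d(x_1,x_2)$ blow up and yields nothing. What is needed is a differentiation: from $d_{|\Si|}(\ga(t_1),\ga(t_2))\leq\Lip(\Si)|t_1-t_2|$ and the displayed double-integral formula, Fubini and the Lebesgue differentiation theorem give $\int_{\R^2}|u\circ s_{\ga(t)}|\,d_{\pi(L)}\,d\l\leq\Lip(\Si)$ for a.e.\ $t$, which is part (1); part (3) follows the same way since the $t$-integral vanishes identically. Your averaging over the pencil of lines through $x$ for part (2), using $\av_{[v]}\,d_{\pi(L_{[v]})}(\bar y)\geq c\,d_{\pi(x)}(\bar y)$, is correct and is exactly how the paper deduces (2) from (1). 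Once you replace your equation by the foliated change-of-variables identity and swap the exhaustion for a Lebesgue differentiation at the point $x=\ga(t)$, the rest of your outline goes through.
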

\begin{proof}
Let $Z\subset\H$ be as in Lemma \ref{lem-distpx1x2}.
By Fubini's theorem, for almost every $L\in\L(\H)$, the intersection 
$L\cap Z$ has full (linear) measure in $L$.  Now let $\ga:\R\ra L$ 
be a unit speed parametrization, and let $S:\R^2\times\R\ra\H$
be defined by $S(y,t)=s_{\ga(t)}(y)$.  By Lemma \ref{lem-enclosedarea},
we have $s_{\ga(t)}(y)=s_x(y)\,\exp(A(y)Z)$, where $A(y)$ is the 
signed area enclosed by the triangle with vertices $\pi(x),\,\pi(\ga(t)),\,\pi(y)$;
this implies that
 the Jacobian of $S:(\R^2\times\R,\l)\ra (\H,\l)$ is $J(y,t)=d_{\pi(L)}(y)$.
Therefore by Lemma \ref{lem-distpx1x2} and the change of variables formula,
\begin{equation}
\label{eqn-changevariables}
d_{\Si}(\ga(t_1),\ga(t_2))=\int_{P_{\ga(t_1),\ga(t_2)}}\;u\,d\l\;
\end{equation}
$$
=\;
\int_{t_1}^{t_2}\int_{\R^2}\;(u\circ S)(y,t)\,d_{\pi(L)}(y)d\l(y)d\l(t)\,.
$$
Since $d_{|\Si|}(\ga(t_1),\ga(t_2))\leq \Lip(\Si)\,|t_1-t_2|$, 
by Fubini's theorem and the Fundamental Theorem of Calculus, 
it follows that for $\l$-a.e. $t\in\R$, we have
$$
\int_{\R^2}\,|u\circ s_{\ga(t)}(y,t)|\,d_{\pi(L)}(y)\;d\l(y)\leq \Lip(\Si)\,.
$$
Applying Fubini's theorem once again, we get (1).

Part (2) of the lemma follows from part (1) by integrating over the lines 
passing through $x$.

Part (3) follows from (\ref{eqn-changevariables}), Fubini's theorem, and the 
Fundamental Theorem of Calculus.

\end{proof}

\begin{remark}
Part (3) of Lemma \ref{lem-lipest1} generalizes to a statement  about
arbitrary Lipschitz
signed horizontal cut measures.   The metric differentiation result of 
Pauls generalizes to signed Lipschitz distance functions such as $d_{\Si}$.
It gives
rise to a measurable function on the collection $\{(L,x)\in  \L(\H)\times \H
\mid x\in L\}$ of pointed lines, which compares the infinitesimal behavior of $d_{\Si}$
along $L$, near $x$, with that of $d$.
 For almost every $x\in\H$ and almost every line
$L\in\L(\H)$ passing through $x$, the integral $\int_{\R^2}\;(u\circ s_x)\,d_{\pi(L)}\,
d\l$ agrees with this function. 

\end{remark}

\bigskip
\begin{lemma}
\label{lem-lipest2}
There is a constant $C$ which depends only on the geometry of $\H$,
such that if $\Si$ is Lipschitz, then:
\begin{enumerate}
\item  For every $L\in\L(\R^2)$, $r\in(0,\infty)$, the
strip $S=\{y\in\R^2\mid d(y,L)<r\}\subset\R^2$ satisfies
$$
|\Si|(\pi^{-1}(S))\;\leq\; \width(S)\Lip(\Si)\,=\,2r\Lip(\Si)\,.
$$
\item 
\label{item-integrable} 
If $\phi:\H\ra\R$ is a measurable function and 
$$
\sup_{x\in\H}\;|\phi(x)|(1+|\pi(x)|)^p\,<\,\infty
$$ for some 
$p>1$, then $\phi\,u\in L^1(\H,\l)$.  
\end{enumerate}
\end{lemma}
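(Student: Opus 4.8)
The plan is to deduce (1) from the cut-metric formula of Lemma~\ref{lem-distpx1x2} by a degeneration argument, and then obtain (2) from (1) by a layer-cake estimate.

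For (1): pushing $\Si$ forward by an isometric automorphism of $\H$ covering a Euclidean motion of $\R^2=\H/[\H,\H]$ (such an automorphism preserves $\l$, hence absolute continuity, and does not change $\Lip$), one reduces to the case where, in the matrix coordinates $(a,b,c)$ with $\pi(a,b,c)=(a,b)$, the line is $\{b=0\}$, so that $S=S_r=\{|b|<r\}$ and $\pi^{-1}(S_r)=\{(a,b,c)\mid|b|<r\}$. For each $a_0>0$, consider the horizontal line through $(a_0,0,0)$ tangent to $Y$, namely $\{(a_0,s,a_0s)\mid s\in\R\}$, with points $x_1^{a_0}=(a_0,-r,-a_0r)$ and $x_2^{a_0}=(a_0,r,a_0r)$; then $(x_1^{a_0},x_2^{a_0})$ is a horizontal pair with $d(x_1^{a_0},x_2^{a_0})=2r$. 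Applying Lemma~\ref{lem-distpx1x2} to $|\Si|$ together with the definition of $\Lip(\Si)$ (the restriction to a full-measure set of pairs is removed routinely, e.g.\ by first proving the bound for a.e.\ $r$ and then using that $r\mapsto|\Si|(\pi^{-1}(S_r))$ is nondecreasing) gives
$$|\Si|\bigl(P_{x_1^{a_0},x_2^{a_0}}\bigr)=d_{|\Si|}(x_1^{a_0},x_2^{a_0})\le 2r\,\Lip(\Si)\,.$$
Since the horizontal plane through $(a',b',c')$ equals $\{(A,B,c'+\tfrac{a'+A}{2}(B-b'))\mid A,B\in\R\}$, a direct computation shows that a point $(a,b,c)$ with $a\neq a_0$ lies in $P_{x_1^{a_0},x_2^{a_0}}$ exactly when $-b+\tfrac{2(c-ab)}{a_0-a}\in(-r,r)$. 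The geometric heart of the matter is that as $a_0\to\infty$ the two bounding horizontal planes $P_{x_i^{a_0}}$ steepen to vertical planes, so the ``bowtie'' $P_{x_1^{a_0},x_2^{a_0}}$ degenerates to the vertical slab $\pi^{-1}(S_r)$; concretely $-b+\tfrac{2(c-ab)}{a_0-a}\to-b$, so every $(a,b,c)$ with $|b|<r$ lies in $P_{x_1^{a_0},x_2^{a_0}}$ for all large $a_0$, i.e.\ $\pi^{-1}(S_r)\subseteq\liminf_{a_0\to\infty}P_{x_1^{a_0},x_2^{a_0}}$ modulo a null set. Fatou's lemma for sets then yields
$$|\Si|(\pi^{-1}(S_r))\le\liminf_{a_0\to\infty}|\Si|\bigl(P_{x_1^{a_0},x_2^{a_0}}\bigr)\le 2r\,\Lip(\Si)=\width(S)\,\Lip(\Si)\,.$$
(The change-of-variables identity in the proof of Lemma~\ref{lem-lipest1} does not see the finiteness of the strip, which is why this limiting step seems necessary.)

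For (2): by hypothesis $|\phi(x)|\le M(1+|\pi(x)|)^{-p}$ for some finite $M$, so it suffices to prove $\int_\H(1+|\pi(x)|)^{-p}\,d|\Si|(x)<\infty$. Writing $(1+t)^{-p}=p\int_t^\infty(1+s)^{-p-1}\,ds$ and using Tonelli,
$$\int_\H(1+|\pi(x)|)^{-p}\,d|\Si|(x)=p\int_0^\infty(1+s)^{-p-1}\,|\Si|\bigl(\pi^{-1}(\{|w|<s\})\bigr)\,ds\,.$$
The Euclidean disk $\{|w|<s\}\subset\R^2$ is contained in a strip of width $2s$, so part (1) bounds $|\Si|(\pi^{-1}(\{|w|<s\}))\le 2s\,\Lip(\Si)$, and hence the right-hand side is at most $2p\,\Lip(\Si)\int_0^\infty s(1+s)^{-p-1}\,ds<\infty$, the last integral converging precisely because $p>1$.

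The main obstacle is the geometric degeneration claim in part (1): that driving the transversal horizontal segment to infinity converts the region $P_{x_1,x_2}$ — whose $|\Si|$-mass is controlled by Lemma~\ref{lem-distpx1x2} — into the vertical slab $\pi^{-1}(S)$. Once this picture and the attendant bookkeeping (applying Lemma~\ref{lem-distpx1x2} to the specific pairs $(x_1^{a_0},x_2^{a_0})$ rather than merely to a.e.\ pair) are in place, the passage to the limit via Fatou and the derivation of (2) are both soft.
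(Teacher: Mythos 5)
Your proof is correct and follows essentially the same route as the paper: for (1), the paper also degenerates the region $P_{y_1,y_2}$ over a transversal horizontal segment of length $2r$ to the slab $\pi^{-1}(S)$ by sending the segment off to infinity along $L$ (using Lemma \ref{lem-enclosedarea} to see the fiberwise intervals exhaust the fibers, where you compute the same fact in explicit coordinates), handles the a.e.\ restriction in Lemma \ref{lem-distpx1x2} by approximating with good horizontal pairs just as you indicate, and then deduces (2) from (1) by summing over unit-width strips, which is equivalent to your layer-cake integration.
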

\begin{proof}
Suppose $x,\,y\in\H$ are  points lying on a line $L\in\L(\H)$ at distance
$d(x,y)=R>0$, and
$L_x,\,L_y\in\L(\H)$ are the lines intersecting
 $L$ orthogonally at $x$ and $y$, respectively.  Now let $y_1,\,y_2\in L_y$
be the two points  in $L_y$ at distance $r$ from $y$, and choose $x'\in L_x$.

Lemma \ref{lem-enclosedarea} implies that
the intersection of $P_{y_1,y_2}$
with the fiber $\pi^{-1}(\pi(x'))=x'\,\exp(\R Z)$ is of the form
$x'\exp (a,b)Z$, where $(a,b)$ is an interval of length $Rr$ shifted by 
the signed area of the triangle with vertices
$\pi(x),\pi(x'),\pi(y)$, i.e. $ \pm \frac12 d(x,x')R$.  Therefore if 
$r>|d(x,x')|$, if we hold 
$x$ and $L$ fixed while letting $R$ tend
to infinity, the set $P_{y_1,y_2}$
 will  contain more and more of the  fiber $\pi^{-1}(\pi(x'))=x'\exp\R Z$.  

Consider the strip $S\,=\,\{p\in\R^2\mid d(p,\pi(L))<r\}$.  If $K\subset\H$ is any compact
subset of $\pi^{-1}(S)$, then the discussion above implies
that when $R$ is sufficiently large, $P_{y_1,y_2}$ will contain $K$.  If we
choose a sequence of horizontal pairs $\{(y_1^k,y_2^k)\}$ which converge
to $(y_1,y_2)$, such that the conclusion of Lemma \ref{lem-distpx1x2} holds
for each of the pairs $(y_1^k,y_2^k)$, then we conclude that
$$
|\Si|(P_{y_1,y_2})\leq \liminf_{k\ra\infty}\,|\Si|(P_{y_1^k,y_2^k})
\,\leq\,\liminf_{k\ra\infty}\,\Lip(\Si)\,d(y_1^k,y_2^k)\,=\,\Lip(\Si)\,2r\,.
$$
In this case we get 
$$
|\Si|(K)\leq 
|\Si|(P_{y_1,y_2})\leq 2\,\Lip(\Si)\,r\,=\,\Lip(\Si)\width(S)\,.
$$
As $K$ was arbitrary (1) follows.

To prove (2), for $k\in\N$,  let $S_k$ be the double
strip 
$$
\{(x,y)\in \R^2\mid y\in (-k,-(k-1))\cup(k-1,k)\}\,.
$$
Then (1) gives 
$$
\int_{\pi^{-1}(S_k)}\,|\phi\, u|\,d\l\,\leq \,2\,(1+(k-1))^{-p}\,\Lip(\Si)\,,
$$
so
$$
\int_{\H}\;|\phi\,u|\,d\l\,=\,\sum_{k=1}^\infty\int_{\pi^{-1}(S_k)}\;
|\phi\,u|\;d\l\;\leq\; 2\Lip(\Si)\,\sum_{k=1}^\infty\,(1-(k-1))^{-p}\,<\,\infty\,.
$$

\end{proof}

\bigskip
\subsection*{Injectivity for horizontal cut measures}
Now suppose $\Si=u\,\l$ is a Lipschitz signed horizontal cut measure such that
$d_{\Si}^{\horiz}\,=\,0$, and
let $K$ be the distribution on $\H$ defined by the 
linear functional 
$$
\phi\mapsto\int_{\R^2}\,(\phi\circ s_e)\,d_{\pi(e)}\,d\l\,=\,
\int_{\R^2}\,(\phi\circ s_e)(x)\,|x|\,d\l(x)\,.
$$
By Lemma \ref{lem-lipest1},  the convolution $u*K$ is well-defined, and equals zero.
The convolution operator $\phi\mapsto \phi*K$ was studied 
in \cite{strichartz}.  Before proceeding, we 
briefly summarize the relevant conclusions from that paper.

To conform with the notation from \cite{strichartz}, we let $z=\pi:\H\ra\R^2\simeq\c$,
 and $t:\H\ra\R$ be the function given
by $p=s_e(\pi(p))\,\exp(tZ)=s_e(z(p))\,\exp(tZ)$.  Strichartz works with the
general Heisenberg group of dimension $2n+1$, so we are in the $n=1$ case.  
For $\la>0$, $k$ a nonnegative integer, and $\eps=\pm 1$, let 
\begin{equation}
\label{eqn-defphi}
\phi_{\la,k,\eps}(z,t)
\end{equation}
$$
\,=\,
(2\pi)^{n+1}\frac{\la^n}{(n+2k)^{n+1}}
\;\exp\left(-\frac{i\eps\la t}{n+2k}\right)\;
\exp\left(-\frac{\la|z|^2}{4(n+2k)}\right)\;
L^{n-1}_k\left(\frac{\la|z|^2}{2(n+2k)}\right)\,,
$$
where $L^{n-1}_k$ is a Laguerre polynomial.  The exponential decay
in $|z|$, together with Lemmas \ref{lem-lipest1} and
 \ref{lem-lipest2} are sufficient
to justify the calculations that arise below.

It was shown in \cite{strichartz} that  convolution operators with radial kernels (i.e.
kernels that are functions of $|z|$ and $t$) commute, so for instance we have
$$
*K*\phi_{\la,k,\eps}\,=\,*\phi_{\la,k,\eps}*K\,.
$$

The convolution operator with $K$ as above
was considered in Section 5, Example 2, with $n=1$, $\al=-1$ and $\be=1$. (Although
there it was assumed that
$0<\realpart \al < 2n$, the calculations are valid when $\al=-1$.)
It is shown there that for all $(\la,k,\eps)\in (0,\infty)\times \Z_{\geq 0}\times\{\pm 1\}$,
$$
\phi_{\la,k,\eps}*K\,=\,\kappa(\la,k,\eps)\,\phi_{\la,k,\eps}\,,
$$
 where
$\kappa(\la,k,\eps)$ is $m(\la(n+2k),\eps\la)$ in the notation of 
\cite[(2.35),(2.36), (5.9)]{strichartz}.  His calculations in (5.10), (5.13')
imply that $\kappa(\la,k,\eps)$ is nonzero for all $(\la,k,\eps)$. 
From this and the commutativity mentioned above, it follows that
$$
u*\phi_{\la,k,\eps}\,=\,\frac{1}{\kappa(\la,k,\eps)}\;u*\phi_{\la,k,\eps}*K
\,=\,\frac{1}{\kappa(\la,k,\eps)}(u*K)*\phi_{\la,k,\eps}\,=\,0\,,
$$
since $u*K=0$.  Therefore, using the exponential
decay in (\ref{eqn-defphi}), for any smooth
compactly supported function $v:(0,\infty)\ra (0,\infty)$, convolution
of $u$ with 
$$
V\;=\;
\int_{(0,\infty)}\,\sum_{k=1}^N\,\sum_{\eps=\pm 1}\;\phi_{\la,k,\eps}\,v(\la)\,d\la
$$
is also zero.   
It is not hard to see that such functions $V$ are dense among radial
functions in the Schwartz
space $\S(\H)$ of rapidly decreasing functions, and this implies that $u=0$.

\bigskip
\begin{theorem}
\label{thm-horizontalinjectivity}
If $\Si$ is a Lipschitz signed horizontal cut measure
such that
$d_{\Si}^{\horiz}\,=\,0$, then $\Si\,=\,0$.
\end{theorem}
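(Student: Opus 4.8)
The plan is to reduce to the absolutely continuous case settled in the discussion preceding the theorem, by mollifying $\Si$ on the group $\H$. Under the identification $\P_h\simeq\H$ used above, $\Si$ is a signed Radon measure on $\H$, and left translation on $\H$ corresponds to the natural action of $\ell_g:\H\ra\H$ on $\P_h$, since $\ell_g$ sends the horizontal plane centered at $x$ to the one centered at $gx$. Writing $g_*\Si$ for the translate of $\Si$ by $g$, the identity $\chi_{\ell_gE}(x)=\chi_E(g^{-1}x)$ for a half-space $E$ gives
$$
d_{g_*\Si}(x_1,x_2)\,=\,d_\Si(g^{-1}x_1,g^{-1}x_2)\,,
$$
and likewise with $|\Si|$ in place of $\Si$. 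I would record that, by left invariance of $d$, this forces $\Lip(g_*\Si)=\Lip(\Si)$, and that since $\ell_g$ preserves $\horiz(\H)$, the hypothesis $d_\Si^{\horiz}=0$ yields $d_{g_*\Si}^{\horiz}=0$ for every $g\in\H$.

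Next I would fix a nonnegative $\psi_\delta\in C_c^\infty(\H)$ with $\int_\H\psi_\delta\,d\l=1$ whose support shrinks to $\{e\}$ as $\delta\ra 0$, and set $\Si_\delta:=\Si*\psi_\delta=\int_\H g_*\Si\;\psi_\delta(g)\,d\l(g)$. Since $\H$ is unimodular, a change of variables identifies $\Si_\delta$ with $u_\delta\,\l$, where $u_\delta(h)=\int_\H\psi_\delta(hy^{-1})\,d\Si(y)$; as $\psi_\delta\in C_c^\infty$ and $\Si$ is Radon one may differentiate under the integral, so $u_\delta\in C^\infty(\H)$ and $\Si_\delta$ is an absolutely continuous horizontal cut measure. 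Using $\psi_\delta\geq 0$ we get $|\Si_\delta|\leq\int_\H|g_*\Si|\,\psi_\delta(g)\,d\l(g)$, whence, by the displayed identity and the Lipschitz hypothesis on $\Si$, $d_{|\Si_\delta|}\leq\Lip(\Si)\,d$; thus $\Si_\delta$ is Lipschitz with $\Lip(\Si_\delta)\leq\Lip(\Si)$. Similarly $d_{\Si_\delta}^{\horiz}(x_1,x_2)=\int_\H d_\Si^{\horiz}(g^{-1}x_1,g^{-1}x_2)\,\psi_\delta(g)\,d\l(g)=0$; here I would use Fubini's theorem together with the observation that, for a.e. horizontal pair $(x_1,x_2)$, the translated pair $(g^{-1}x_1,g^{-1}x_2)$ lands in the full measure set on which $d_\Si$ vanishes for a.e. $g$.

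At this point $\Si_\delta=u_\delta\,\l$ is an absolutely continuous Lipschitz horizontal cut measure with $d_{\Si_\delta}^{\horiz}=0$, so the argument already carried out applies to it: with $K$ the distribution above, Lemma \ref{lem-lipest1} gives $u_\delta*K=0$; the computation in \cite{strichartz} that $\kappa(\la,k,\eps)\neq 0$ then forces $u_\delta*\phi_{\la,k,\eps}=0$ for all $(\la,k,\eps)$ (the growth bounds of Lemma \ref{lem-lipest2}, applied to $\Si_\delta$, justifying the convolution manipulations), and density of the functions $V$ among radial Schwartz functions yields $u_\delta=0$, i.e. $\Si_\delta=0$. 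Letting $\delta\ra 0$, $\Si_\delta\rightharpoonup\Si$ weakly as Radon measures, so $\Si=0$.

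Since the genuinely hard analysis is already behind us, I expect the only delicate points in this last step to be bookkeeping: checking that mollification stays within the class of Lipschitz horizontal cut measures with a uniform Lipschitz bound and preserves the almost-everywhere identity $d_\Si^{\horiz}=0$ --- both of which come down to the left invariance of $d$ and the fact that $\ell_g$ preserves horizontal planes and horizontal pairs --- and verifying that the smooth densities $u_\delta$ still decay fast enough, via Lemma \ref{lem-lipest2}, for the identities with the $\phi_{\la,k,\eps}$ to be valid, which goes exactly as in the absolutely continuous case.
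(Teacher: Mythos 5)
Your proposal is correct and takes essentially the same route as the paper: the authors likewise settle the absolutely continuous case by the Strichartz convolution argument and then reduce the general case to it by mollifying $\Si$ with smooth compactly supported functions (Lemma \ref{lem-convolution}), using the left-invariance of the setup and passing to the weak limit. Your write-up merely makes explicit the bookkeeping (invariance of $\Lip(\Si)$ and of the identity $d_{\Si}^{\horiz}=0$ under left translation, and the decay of the mollified densities) that the paper leaves implicit.
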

The theorem follows from the above discussion when $\Si$ is absolutely
continuous with respect to $\l$.  The general case follows from this, 
by an approximation argument:

\begin{lemma}
\label{lem-convolution}
There is a sequence of smooth functions $\{\rho_k:\H\ra\R\}$
such that the sequence of measures $\{\Si_k=\rho_k\,\l\}$ converges
weakly to $\Si$, and each $\Si_k$ is a  Lipschitz
signed cut measure with vanishing cut metric.
\end{lemma}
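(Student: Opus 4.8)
The plan is to produce the $\Si_k$ by averaging the left-translates of $\Si$ against a smooth approximate identity on the group, so that the three relevant properties of $\Si$ are inherited by each $\Si_k$ from its translates, while the averaging automatically supplies a smooth density.

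First I would record the left-equivariance of the whole construction. Identify $\P_h$ with $\H$ as in the text; for $g\in\H$ the left translation $\ell_g\colon\H\ra\H$ carries the horizontal plane centered at $x$ to the one centered at $gx$ (lines being left-translates of horizontal one-parameter subgroups), so $\ell_g$ acts on $\P_h$ and we may form the pushforward $(\ell_g)_*\Si$, again a signed measure supported on horizontal half-spaces. Since $\ell_g$ takes horizontal pairs to horizontal pairs, and since the $(\ell_g)_*\Si$-measure of the half-spaces separating $x_1,x_2$ equals the $\Si$-measure of the half-spaces separating $g^{-1}x_1,g^{-1}x_2$ (cf.\ Lemma \ref{lem-distpx1x2}), one gets $d_{(\ell_g)_*\Si}=d_\Si\circ(\ell_{g^{-1}}\times\ell_{g^{-1}})$; in particular $d_{(\ell_g)_*\Si}^{\horiz}=0$. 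Using the left-invariance of $d$ on $\H$ and the monotonicity of $\mu\mapsto d_\mu$ on positive measures, the same identity for $|\Si|$ gives $d_{|(\ell_g)_*\Si|}=d_{|\Si|}\circ(\ell_{g^{-1}}\times\ell_{g^{-1}})\le\Lip(\Si)\,d$. Thus every left-translate of $\Si$ is a Lipschitz signed horizontal cut measure with vanishing horizontal cut metric, and with the same Lipschitz constant.

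Next I would fix nonnegative $\psi_k\in C^\infty_c(\H)$ with $\int_\H\psi_k\,d\l=1$ and $\spt\psi_k\ra\{e\}$, and set
$$
\Si_k\defeq\int_\H(\ell_g)_*\Si\;\psi_k(g)\,d\l(g)\,.
$$
The linear growth bound $|\Si|(\pi^{-1}(S))\le 2r\,\Lip(\Si)$ on strips from Lemma \ref{lem-lipest2}(1) makes this integral converge to a locally finite signed Radon measure on $\P_h$, and, $\psi_k$ being smooth and compactly supported, a standard mollification argument shows $\Si_k=\rho_k\,\l$ for some $\rho_k\in C^\infty(\H)$. Linearity of $\Si\mapsto d_\Si$ yields $d_{\Si_k}^{\horiz}=\int_\H d_{(\ell_g)_*\Si}^{\horiz}\,\psi_k(g)\,d\l(g)=0$, while $|\Si_k|\le\int_\H(\ell_g)_*|\Si|\;\psi_k(g)\,d\l(g)$ combined with the previous paragraph gives $d_{|\Si_k|}\le\Lip(\Si)\,d$; so each $\Si_k$ is a Lipschitz signed horizontal cut measure with vanishing horizontal cut metric. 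Finally, for $\phi\in C_c(\P_h)$ one has $\int\phi\,d\Si_k=\int_\H\big(\int_\H\phi(gx)\,\psi_k(g)\,d\l(g)\big)\,d\Si(x)$; the inner integral converges to $\phi(x)$ uniformly, with supports contained in a fixed compact set, so local finiteness of $|\Si|$ and dominated convergence give $\Si_k\ra\Si$ weakly. Applying the already-established absolutely continuous case of Theorem \ref{thm-horizontalinjectivity} to each $\Si_k$ then forces $\Si_k=0$, whence $\Si=0$.

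The hard part is not any single computation but arranging that all three defining properties survive the averaging simultaneously; this is exactly what the left-equivariance of the cut-metric construction — together with the left-invariance of $d$, which is what keeps the Lipschitz constant from growing under averaging — is designed to guarantee. The remaining points (convergence of the defining integral, smoothness of $\rho_k$, and weak convergence) are routine given the growth estimates of Lemma \ref{lem-lipest2}.
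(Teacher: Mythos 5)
Your proposal is correct and follows essentially the same route as the paper: both define $\Si_k$ as the convolution of $\Si$ with a smooth compactly supported approximate identity, using the left-equivariance of the cut-metric construction and the left-invariance of $d$ to see that each translate (hence each average) remains Lipschitz with vanishing horizontal cut metric. Your write-up merely supplies the routine verifications (smoothness of the density, convergence of the defining integral, weak convergence) that the paper leaves implicit.
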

\begin{proof}
Due to the $\H$-invariance of the setup, 
for any $g\in \H$, the pushforward of $\Si$  under  left translation
 $(\ell_g)_*\Si$ is also a Lipschitz horizontal cut measure with vanishing cut metric
on horizontal pairs.  Therefore
the same will be true of any convolution $\phi*\Si$, where $\phi$ is a compactly
supported continuous function.  Now let $\Si_k=\phi_k*\Si$, where $\{\phi_k\}$ 
is an appropriate sequence of smooth compactly supported functions converging 
weakly to a Dirac mass.
\end{proof}

\bigskip
\begin{corollary}
\label{cor-transinvzero}
If $\Si$ is a Lipschitz signed horizontal cut measure, and the restriction
of $d_{\Si}$ to horizontal pairs
is invariant under translation by the center, then $\Si=0$.
\end{corollary}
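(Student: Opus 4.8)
The plan is to reduce to the injectivity statement Theorem~\ref{thm-horizontalinjectivity} by an averaging-over-the-center argument, and then to annihilate the resulting invariant measure using the strip estimate of Lemma~\ref{lem-lipest2}(1). First I would fix $g\in\cent(\H)$ and form the signed cut measure $\Si_g:=(\ell_g)_*\Si-\Si$, where $(\ell_g)_*$ denotes pushforward under the map $E\mapsto\ell_g(E)$ on half-spaces. Since left translation by a central element carries lines to lines, it carries the horizontal plane $P_x$ to $P_{gx}$, so $(\ell_g)_*\Si$, and hence $\Si_g$, is again supported on horizontal half-spaces. Because $\ell_g$ is an isometry, $d_{(\ell_g)_*|\Si|}(x_1,x_2)=d_{|\Si|}(g^{-1}x_1,g^{-1}x_2)\le\Lip(\Si)\,d(x_1,x_2)$ on a full measure set, and since $|\Si_g|\le(\ell_g)_*|\Si|+|\Si|$ this shows $\Si_g$ is Lipschitz with $\Lip(\Si_g)\le 2\Lip(\Si)$. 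Finally, using $d_{(\ell_g)_*\Si}(x_1,x_2)=d_\Si(g^{-1}x_1,g^{-1}x_2)$ on horizontal pairs, the hypothesis that $d_\Si^{\horiz}$ is invariant under $\cent(\H)$, and the linearity of $\Si\mapsto d_\Si$ in the signed cut measure (formula (\ref{eqn-superposition})), I get $d_{\Si_g}^{\horiz}=0$. Theorem~\ref{thm-horizontalinjectivity} then forces $\Si_g=0$; as $g\in\cent(\H)$ was arbitrary, $\Si$ is $\cent(\H)$-invariant.

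It then remains to show that a $\cent(\H)$-invariant Lipschitz horizontal cut measure vanishes. Under the identification $\P_h\simeq\H$, $x\mapsto P_x$, the $\cent(\H)$-action becomes $P_x\mapsto P_{gx}$, which in the matrix coordinates $(a,b,c)$ is translation in the central variable $c$; in particular every fiber $\pi^{-1}(w)$, $w\in\R^2$, is a single orbit. Pushforward under the homeomorphism $\ell_g$ preserves the Jordan decomposition, so $|\Si|$ is $\cent(\H)$-invariant as well. Now, for any line $L\subset\R^2$ and $r>0$, Lemma~\ref{lem-lipest2}(1) bounds the $|\Si|$-mass of the slab $\pi^{-1}(S)$ over the strip $S=\{y:\dist(y,L)<r\}$ by $2r\Lip(\Si)<\infty$. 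But $\pi^{-1}(S)$ is $\cent(\H)$-invariant and is tiled by the translates $\ell_{\exp(nZ)}(\pi^{-1}(S)\cap\{0\le c<1\})$, $n\in\Z$, which all have the same $|\Si|$-measure; a convergent sum of equal nonnegative terms is zero, so $|\Si|(\pi^{-1}(S))=0$. Covering $\R^2$ by countably many such strips gives $|\Si|=0$, i.e.\ $\Si=0$.

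The only step I expect to need genuine care is the reduction in the first paragraph: one must verify that $\Si_g$ really lies in the class of Lipschitz signed horizontal cut measures with vanishing cut metric on horizontal pairs, so that Theorem~\ref{thm-horizontalinjectivity} applies. This is where the isometry invariance of $d$, the linearity of the cut-metric construction in the signed cut measure, and the center-invariance hypothesis on $d_\Si^{\horiz}$ all enter, and one should be mildly careful about the full-measure exceptional sets on which the various cut metrics are defined (they can be intersected since only finitely many measures are involved). The concluding ergodic-type observation — a $\cent(\H)$-invariant Radon measure that is finite on the invariant slabs $\pi^{-1}(S)$ must vanish — is standard.
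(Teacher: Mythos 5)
Your proposal is correct and follows essentially the same route as the paper: form the difference $\Si-(\ell_g)_*\Si$ for central $g$, apply Theorem~\ref{thm-horizontalinjectivity} to conclude center-invariance, and then use the strip estimate of Lemma~\ref{lem-lipest2}(1) to see that a center-invariant measure with finite mass on the slabs $\pi^{-1}(S)$ must vanish. The only cosmetic differences are that the paper first reduces to absolutely continuous $\Si$ by the smoothing of Lemma~\ref{lem-convolution} (harmless either way, since Theorem~\ref{thm-horizontalinjectivity} is stated for general Lipschitz measures), and that you spell out the final tiling argument which the paper compresses into ``$\pi_*\Si$ is Radon, which forces $\Si=0$.''
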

\begin{proof}
By the same smoothing argument as in Lemma \ref{lem-convolution}, it suffices
to treat the case when $\Si$ is absolutely continuous with respect to $\l$, 
so $\Si=u\,\l$.  

Pick a central element $g\in\exp \R Z$.   Define $\Si'$ to be the different
of signed measures $\Si-(\ell_g)_*\Si$.  By linearity $d_{\Si'}^{\horiz}=0$,
and so by Theorem \ref{thm-horizontalinjectivity} we have $\Si'=0$.  
Therefore $\Si$ is 
invariant under translation by the center $\exp\R Z$.  On the other hand,
by Lemma \ref{lem-lipest2}, the pushforward
  $\pi_*\Si$ is a Radon measure on $\R^2$.  This forces $\Si=0$.
\end{proof}

\bigskip
\bigskip
\subsection*{Injectivity for vertical cut measures}
We now suppose $\Si$ is a symmetrized  Lipschitz signed cut measure supported
on vertical half-spaces.

\begin{lemma}
\label{lem-verticalinjectivity}
If $d_{\Si}^{\horiz}=0$, then $\Si=0$.
\end{lemma}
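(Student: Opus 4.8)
The plan is to transfer the statement to a Radon-inversion problem in the plane. Since $\Si$ is a symmetrized cut measure supported on vertical half-spaces, pushing it forward under the $2$-to-$1$ map $\hs(\H)\to\P$ and identifying each vertical plane $\pi^{-1}(L)$ with the line $L$ in $\R^2$ produces a signed Radon measure $\nu$ on the space of lines in $\R^2$; by the discussion at the beginning of this section this correspondence is a bijection, so it suffices to show $\nu=0$. For a vertical half-space $E=\pi^{-1}(C)$ (with $C\subset\R^2$ a half-plane) and a horizontal pair $(x_1,x_2)$ one has $d_E(x_1,x_2)=d_C(\pi(x_1),\pi(x_2))$, since $\chi_E=\chi_C\circ\pi$; integrating over $\Si$ as in the proof of Lemma \ref{lem-distpx1x2} gives $d_\Si^\horiz(x_1,x_2)=d_\nu(\pi(x_1),\pi(x_2))$, the planar cut metric of $\nu$. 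Because $\pi$ restricts to an isometry on each line and every Euclidean segment lifts to a horizontal segment, the hypothesis $d_\Si^\horiz=0$ is equivalent to $d_\nu\equiv 0$ on $\R^2$, and $d_{|\Si|}^\horiz\le\Lip(\Si)\,d$ becomes $d_{|\nu|}(y_1,y_2)\le\Lip(\Si)\,|y_1-y_2|$.

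Next I would reduce to the case of a smooth density. Convolving $\nu$ with a compactly supported mollifier on $\R^2$ (acting on lines by translation) preserves both $d_\nu\equiv 0$ and the Lipschitz bound, exactly as in Lemma \ref{lem-convolution}; since such smoothings converge weakly to $\nu$, it is enough to treat $\nu=g(\theta,p)\,d\theta\,dp$, where a line is written as $\ell_{\theta,p}=\{y\in\R^2:\langle y,u_\theta\rangle=p\}$, $u_\theta=(\cos\theta,\sin\theta)$, $\theta\in[0,\pi)$. Here $g$ is smooth, and the planar analogue of the Crofton estimate Lemma \ref{lem-lipest2}(1) gives $\sup_{p}|g(\theta,p)|(1+|p|)^N<\infty$ for all $N$, which legitimizes the manipulations below; in particular the antiderivative $F(\theta,p)=\int_{-\infty}^{p}g(\theta,q)\,dq$ is well-defined.

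For the heart of the argument, note that for all $x\in\R^2$, all unit vectors $w$, and all $s\in\R$,
\begin{equation}
0=d_\nu(x,x+sw)=\int_0^\pi\operatorname{sgn}(\langle w,u_\theta\rangle)\,\bigl(F(\theta,\langle x+sw,u_\theta\rangle)-F(\theta,\langle x,u_\theta\rangle)\bigr)\,d\theta.
\end{equation}
Differentiating at $s=0$ (the sign factor is locally constant in $s$) yields $\int_0^\pi|\langle w,u_\theta\rangle|\,\partial_pg(\theta,\langle x,u_\theta\rangle)\,u_\theta\,d\theta=0$ for every $x,w$; averaging over $w=u_\phi$, $\phi\in[0,\pi)$, and using $\int_0^\pi|\cos(\theta-\phi)|\,d\phi=2$ gives $\int_0^\pi\partial_pg(\theta,\langle x,u_\theta\rangle)\,u_\theta\,d\theta=0\in\R^2$ for all $x$. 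A Fourier-slice computation then finishes things: each component of this identity is a bounded (hence tempered) function of $x$ that vanishes identically, and taking its Fourier transform in $x$ in the coordinates $x=s\,u_\theta+t\,u_\theta^\perp$ collapses the $t$- and $\theta$-integrations to $\cos\theta\cdot\widehat{\partial_pg(\theta,\cdot)}(r)=0$ and $\sin\theta\cdot\widehat{\partial_pg(\theta,\cdot)}(r)=0$ for all $\theta\in[0,\pi)$ and all $r\in\R$, where $\widehat{\phantom{x}}$ denotes the $1$-dimensional Fourier transform in $p$. Hence $\widehat{\partial_pg(\theta,\cdot)}\equiv 0$ for every $\theta$, so $\partial_pg\equiv 0$, and by the decay of $g$ we conclude $g\equiv 0$, i.e. $\nu=0$ and $\Si=0$. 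The only real difficulties here are bookkeeping: making the smoothing reduction rigorous, keeping careful track of the sign factor $\operatorname{sgn}(\langle w,u_\theta\rangle)$ under differentiation, and verifying that the decay of $g$ from Lemma \ref{lem-lipest2} justifies the Fubini steps and the distributional Fourier computation.
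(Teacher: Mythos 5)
Your reduction to a signed measure $\nu$ on $\L(\R^2)$ with $d_\nu\equiv 0$, and the smoothing step, both match the paper. But the analytic core has a genuine gap, and it occurs in two linked places. First, the claimed a priori estimate $\sup_p|g(\theta,p)|(1+|p|)^N<\infty$ for all $N$ is not a consequence of the Lipschitz/Crofton bound and is false in general: the bound $d_{|\nu|}(y_1,y_2)\le\Lip(\Si)|y_1-y_2|$ only controls the mass of lines crossing a fixed segment, which yields \emph{uniform} local bounds on $g$ in $p$, not decay. (The translation-invariant measure $\tau_v$ concentrated on lines parallel to a fixed direction is Lipschitz and has density constant in $p$.) Second, and more seriously, the averaging over $w=u_\phi$ discards exactly the information needed to finish. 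After averaging you are left with the vanishing of a back-projection $\int_0^\pi h_\theta(\langle x,u_\theta\rangle)\,d\theta=0$ with $h_\theta=\partial_p g(\theta,\cdot)\,u_\theta$, and the back-projection operator is \emph{not} injective on bounded (non-decaying) integrands: its kernel contains every $h_\theta(p)=c(\theta)$ with $\int_0^\pi c(\theta)\,u_\theta\,d\theta=0$. Your Fourier-slice computation, correctly carried out for tempered-distribution data, only shows that $\widehat{\partial_pg(\theta,\cdot)}$ is supported at the origin, i.e. that $g(\theta,p)$ is (after using boundedness) affine, and ultimately constant, in $p$. That leaves precisely the translation-invariant part of $\nu$ undetermined, and for that part the averaged identity reduces to $\int_0^\pi d(\theta)\,d\theta=0$, which does not force $d\equiv 0$.

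The repair is to \emph{not} average: the first $s$-derivative of $d_\nu(x,x+sw)=0$ already gives $\int_0^\pi|\cos(\theta-\phi)|\,g(\theta,\langle x,u_\theta\rangle)\,d\theta=0$ for every $x$ and every $\phi$, which says that for each fixed $x$ the angular density $\theta\mapsto g(\theta,\langle x,u_\theta\rangle)$ lies in the kernel of convolution with $|\sin\theta|$ on $\R P^1$. This is exactly the paper's route: it localizes at a point $x$, reduces to a translation-invariant cut measure given by a measure $\mu$ on $\R P^1$, and proves injectivity of $\mu\mapsto\mu*|\sin\theta|$ by checking it on the even exponentials $\exp(ik\theta)$ (the Fourier coefficients of $|\sin\theta|$ are all nonzero for even $k$). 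Once that is done one concludes $g(\theta,\langle x,u_\theta\rangle)=0$ for all $\theta$ and all $x$, hence $g\equiv 0$, with no decay hypothesis needed. If you want to keep your global Fourier-slice framework, you must retain the full $\phi$-dependence and diagonalize the circle convolution, which amounts to the same computation.
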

\begin{remark}
This lemma is not needed in the proof of the main theorem,
so some readers may prefer to skip it.
It does, however, give some additional information about the situation of the theorem.
\end{remark}

\begin{proof}

As with horizontal cut measures, we prefer to work with a measure on the
space of vertical planes $\P_v$, rather than a symmetric cut measure.
Moreover, since vertical planes are in obvious bijection with lines in 
$\R^2$, we may reformulate this as a question about a
signed cut measure $\si$ on the space of lines $\L(\R^2)$, and the associated
cut metric $d_{\si}$ on $\R^2$. 
By a smoothing argument as in Lemma \ref{lem-convolution}, we may 
assume that $\si$ is of the form $u\,\l$, where $u:\L(\R^2)\ra\R$ is a smooth
function, and $\l$ is Haar  measure on $\L(\R^2)$ (when viewed as a homogeneous
space of the isometry group $\isom(\R^2)$).   Note that the asymptotic
behavior of the cut metric $d_{\si}$ near a point $x\in \R^2$ agrees with the one induced
by a translation invariant cut measure $\si_x$ on $\R^2$, where $\si_x$ 
is determined by the density function $u$ restricted to the set of lines
passing through $x$; hence we are reduced to the case when $\si$
is translation invariant.    

A translation invariant cut measure $\si$ is obtained as follows.  For each direction
$v$ in $\R^2$,  
there is a unique translation invariant  measure $\tau_v$ on $\L(\R^2)$ 
supported on the set of lines parallel to $v$, normalized
such that the $d_{\tau_v}$-distance between
two lines $L_1,\,L_2$ parallel to $v$ is the same as their Euclidean distance.  
A general translation invariant cut measure $\si$ is obtained as a superposition
of the $\tau_v$'s, or as the pushforward of a signed Radon
measure $\mu$ on $\R P^1$ under the map $v\mapsto \tau_v$.  A calculation shows that
the induced distance on $\R^2$ is homogeneous of degree $1$, and that
if $\xi$ is a unit vector, then 
$$
d_{\si}(0,\xi)\,=\,\int_{\R P^1}\,|\sin\angle (\xi,v)|\,d\mu(v)\,.
$$  
Thus the operator $\si\mapsto d_{\si}$ is equivalent to the convolution
operator $\mu\mapsto \mu *|\sin\th|$ on $\R P^1$.  Direct calculation shows
that this is injective for complex exponentials $\exp(ik\th):\R P^1\ra\R$
for $k$ even, and this implies injectivity in general.  
\end{proof}

\bigskip
\bigskip
\subsection*{Proof of Theorem \ref{thm-maininjectivity}}

Suppose $\Si=\Si_v+\Si_h$ is a symmetrized Lipschitz signed  cut measure
and $d_{\Si}^{\horiz}=0$.  Since  
both $d_{\Si}^{\horiz}$ and $d_{\Si_v}^{\horiz}$ are invariant
under translation by the center, the same is true of
 $d_{\Si_h}^{\horiz}=d_{\Si}^{\horiz}-d_{\Si_v}^{\horiz}$.  By
Corollary \ref{cor-transinvzero}, we have $\Si_h=0$.  Then $d_{\Si_v}=0$,
and Lemma \ref{lem-verticalinjectivity} implies that $\Si_v=0$ as well.

\bigskip
\bigskip
\section{Cut measures which are standard on lines}
\label{sec-standardonlines}
When $\Si$ is the cut measure arising from a map $\H\ra L^1$ which
comes from Pauls' metric diffentiation theorem, then we know that
the   restriction of $d_{\Si}$ to lines is a constant
multiple of the Heisenberg metric.  Using the injectivity statement
in Theorem \ref{thm-maininjectivity}, we get:

\begin{theorem}
\label{thm-standardcutmeasure}
Suppose $\Si$ is a cut measure on $\H$ such that:
\begin{itemize}
\setlength{\itemsep}{.75ex}%
\setlength{\parskip}{.75ex}%
\item  The cut metric $d_{\Si}$ is bounded by a multiple of $d$:
$d_{\Si}\leq C\,d$.
\item The restriction of $d_{\Si}$ to almost every  line $L\in \L(\H)$ is
a constant multiple of $d$: 
$$
d_{\Si}\restr_{L}\;=\;c_L\,d\restr_L\,.
$$
\end{itemize}
Then $\Si$ is supported on vertical half-spaces, and moreover, its 
symmetrization is translation invariant.  
\end{theorem}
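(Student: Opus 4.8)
The plan is to reduce everything to the injectivity statement of Theorem~\ref{thm-maininjectivity}: I will produce an explicit translation-invariant vertical cut measure $\Si_0$ with $d_{\Si_0}^{\horiz}=d_{\Si}^{\horiz}$, and then conclude $\Si=\Si_0$. The first move is the monotonicity reduction. Since $d_\Si\restr_L=c_L\,d\restr_L$, the $L^1_{\loc}$ map with cut measure $\Si$ restricts to a constant-speed geodesic on almost every line, so Proposition~\ref{prop-geodesicmonotone} (applied to the family of lines in $\H$, i.e.\ its Corollary) forces $\Si$-a.e.\ cut to be monotone, and Theorem~\ref{thm-monotonehalfspace} then shows that $\Si$-a.e.\ cut is, modulo a null set, a half-space. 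Discarding the trivial cuts $\emptyset,\H$ and passing to the symmetrization of $\Si$ (neither changes $d_\Si$), I may assume $\Si$ is a symmetrized cut measure supported on half-spaces; it is Lipschitz because $d_{|\Si|}=d_\Si\leq C\,d$.

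Next I would build the comparison measure. Fix a point $x\in\H$ where the conclusion of Pauls' metric differentiation theorem (Theorem~\ref{thm-pauls}) holds for the Lipschitz pseudo-distance $d_\Si$: the rescalings $\frac{1}{\la}s_\la^*(\ell_x)^*d_\Si$ converge on horizontal pairs to the left-invariant Carnot pseudo-distance $\al_x$ of a semi-norm $\|\cdot\|_x$ on $\De\simeq\R^2$. Because $d_\Si\restr_L=c_L\,d\restr_L$ holds exactly, the restriction of $\frac{1}{\la}s_\la^*(\ell_x)^*d_\Si$ to any line through the identity is independent of $\la$, so $\al_x$ is precisely the Carnot metric of the function $v\mapsto c_{L_{x,v}}|v|$; in particular this function is a semi-norm on $\R^2$. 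Every centrally symmetric convex body in the plane is a zonoid --- equivalently, every planar semi-norm is a cosine transform $v\mapsto\int_{\R P^1}|\sin\angle(v,w)|\,d\mu(w)$ --- so there is a translation-invariant cut measure $\si_0$ on $\L(\R^2)$ with $d_{\si_0}(0,\cdot)=\|\cdot\|_x$. Let $\Si_0$ be the associated vertical cut measure on $\H$, so $d_{\Si_0}(y_1,y_2)=\|\pi(y_1)-\pi(y_2)\|_x$; since $\pi$ is $1$-Lipschitz, $\Si_0$ is Lipschitz, and, being built from a translation-invariant measure on half-planes, it is invariant under left translation by all of $\H$.

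The heart of the argument, and what I expect to be the main obstacle, is to show $d_\Si^{\horiz}=d_{\Si_0}^{\horiz}$ --- equivalently that the speed $c_L$ depends only on the direction of $L$, equivalently that $\|\cdot\|_x$ is independent of $x$, equivalently that $d_\Si^{\horiz}$ is invariant under translation by the center. I would reduce this to a vanishing statement: the difference $\Si-\Si_0$ is a symmetrized Lipschitz signed cut measure supported on half-spaces, and on every line $L$ its cut metric equals $(c_L-\|\hat L\|_x)\,d\restr_L$, a multiple of $d$ whose coefficient vanishes on every line through the base point $x$. It then suffices to show that such a signed half-space cut measure has vanishing cut metric on all horizontal pairs. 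Two features are exploited here: for a fixed direction $v$ the coefficient $c_L-\|v\|_x$ is constant along each leaf of the $v$-foliation (immediate from $d_\Si\restr_L=c_L\,d\restr_L$), so the obstruction lives only on the $2$-dimensional leaf spaces; and the geometry of lines in $\H$ (Lemma~\ref{lem-geometry}), together with additivity of the cut metric along lines and the uniform bound coming from $d_\Si\leq C\,d$, relates the coefficients of skew lines through their families of common transversals and rules out any genuine variation. Concretely, one may average $\Si-\Si_0$ over the center (weak-$*$ compactness, controlled by the Lipschitz bound, producing a limit invariant under $\cent(\H)$ whose horizontal part vanishes by Corollary~\ref{cor-transinvzero}) and combine this with Lemma~\ref{lem-verticalinjectivity}; this makes the argument rigorous and yields $c_L=\|\hat L\|_x$ for a.e.\ line $L$, i.e.\ $d_\Si^{\horiz}=d_{\Si_0}^{\horiz}$.

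With the matching in hand, $\Si-\Si_0$ is a symmetrized Lipschitz signed cut measure supported on half-spaces with $d_{\Si-\Si_0}^{\horiz}=0$, so Theorem~\ref{thm-maininjectivity} gives $\Si=\Si_0$, which is supported on vertical half-spaces and whose symmetrization (being built from a translation-invariant measure on half-planes) is translation-invariant, as required. I expect the monotonicity reduction, the construction of $\Si_0$, and the final application of Theorem~\ref{thm-maininjectivity} to be routine; the real difficulty is concentrated in the passage from the infinitesimal data supplied by metric differentiation to the global statement that the speed along lines is a function of the direction alone, and it is exactly there that the harmonic-analytic content of Section~\ref{sec-injectivity} (Corollary~\ref{cor-transinvzero} and Theorem~\ref{thm-maininjectivity}) is genuinely needed.
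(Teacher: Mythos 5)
Your overall architecture is sound and you have correctly located the crux: everything reduces to showing that the speed $c_L$ depends only on the direction of $L$, i.e.\ that $d_{\Si}^{\horiz}$ is translation invariant; after that, Corollary \ref{cor-transinvzero} kills $\Si_h$ and Lemma \ref{lem-verticalinjectivity} gives $\H$-invariance of $\Si_v$, exactly as in the paper. (Your detour through Pauls' theorem and the planar zonoid representation to build an explicit comparison measure $\Si_0$ is harmless but unnecessary: once translation invariance of $d_{\Si}^{\horiz}$ is known, one applies the two injectivity results to $\Si$ directly and to the difference of $\Si_v$ with its translates, with no need to realize $\|\cdot\|_x$ by a concrete measure.)

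The gap is that you never actually prove the crux. The two arguments you offer for it both fail to close. The qualitative one (``the geometry of lines \dots relates the coefficients of skew lines through their families of common transversals and rules out any genuine variation'') is not carried out, and the triangle inequality along transversals gives nothing without a quantitative input. The ``concrete'' one --- averaging $\Si-\Si_0$ over $\cent(\H)$ and applying Corollary \ref{cor-transinvzero} and Lemma \ref{lem-verticalinjectivity} to the weak-$*$ limit --- cannot work even in principle: averaging over the center produces a measure whose cut metric on a line $L$ sees only the \emph{mean} of $c_{L'}-\|\hat L\|_x$ over the central translates $L'$ of $L$, so any conclusion about the average is compatible with genuine variation of $c_L$ among parallel lines; moreover Lemma \ref{lem-verticalinjectivity} requires the vanishing of the horizontal cut metric, which is precisely what you are trying to establish. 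The missing idea is elementary and is the one line the paper uses: two parallel lines $L_1,L_2$ in $\H$ diverge \emph{sublinearly} (the distance between the points $g\exp(sX)$ and $h\exp(sX)$ grows like $\sqrt{s}$, by Lemma \ref{lem-enclosedarea} and (\ref{eqn-verticaldistance}), while the arc length grows like $s$). Hence the triangle inequality gives
$$
|c_{L_1}-c_{L_2}|\,s\;\leq\;d_{\Si}\bigl(g,h\bigr)+d_{\Si}\bigl(g\exp(sX),h\exp(sX)\bigr)\;\leq\;C\cdot O(\sqrt{s})\,,
$$
and letting $s\ra\infty$ forces $c_{L_1}=c_{L_2}$. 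With that observation inserted in place of your third paragraph, the rest of your argument goes through.
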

\begin{proof}
Applying Proposition \ref{prop-geodesicmonotone}, it follows that 
$\Si$ is supported on monotone cuts, and hence by 
Theorem \ref{thm-monotonehalfspace} on vertical and horizontal half-spaces.
We may assume that $\Si$ is symmetric.

The Lipschitz condition implies that if $L_1$ and $L_2$ are parallel
lines, then the multiples $c_{L_1}$ and $c_{L_2}$ coincide, because the
lines diverge sublinearly.  Thus 
 $d_{\Si}^{\horiz}$  is invariant under translation.  Then $d_{\Si_h}^{\horiz}
=d_{\Si}^{\horiz}-d_{\Si_v}^{\horiz}$
is invariant under vertical translation,  and Corollary \ref{cor-transinvzero}
gives $\Si_h=0$.  Therefore $d_{\Si_v}^{\horiz}$ is $\H$-invariant, so
 Lemma \ref{lem-verticalinjectivity} implies that $\Si=\Si_v$ is
$\H$-invariant. 
\end{proof}

\bigskip
{\em Proof of Theorem \ref{thm-maindiff}.}
Let $f:\H\ra L^1$ be a Lipschitz map, and let 
$\rho_{x,\la}$ be as in the statement of Theorem \ref{thm-maindiff}. 

By Theorem \ref{thm-pauls}, for almost every point $x\in \H$, there
is a norm $\|\cdot\|_x$ on $\R^2$, such that $\rho_{x,\la}\restr_{\horiz(\H)}$
converges uniformly on compact sets to the pseudo-metric 
$(z_1,z_2)\mapsto \|\pi(z_1)-\pi(z_2)\|_x$, for all $(z_1,z_2)\in\horiz(\H)$.   
We claim that the same statement
holds for arbitrary pairs.  If not, since the family $\{\rho_{x,\la}\}_{\la\in (0,\infty)}$
is uniformly Lipschitz, by the Arzela-Ascoli theorem we may find a 
sequence $\{\la_k\}\ra 0$ such that $\rho_{x,\la_k}$ converges uniformly on 
compact subsets of $\H\times\H$ to a limiting pseudo-distance $\rho_\infty$,
such that 
\begin{equation}
\label{eqn-badone}
\rho_\infty(z_1,z_2)\neq \|\pi(z_1)-\pi(z_2)\|_x
\end{equation}
for some
$(z_1,z_2)\in\H\times\H$, while $\rho_\infty(z_1,z_2)\neq \|\pi(z_1)-\pi(z_2)\|_x$
for all $(z_1,z_2)\in\horiz(\H)$.  

By Corollary \ref{cor-paulspluskakutani}, the pseudo-distance $\rho_\infty$
is induced by a map $f_\om:\H\ra L^1$.  If $\Si$ is the cut measure
associated with $f_{\om}$, then $\Si$ satisfies the hypotheses
of  Theorem \ref{thm-standardcutmeasure}.  Therefore 
$\Si$ is supported on vertical half-spaces, which means that $d_{\Si}$
is the pullback of a metric from $\R^2$ by the projection map $\pi$.  This
contradicts (\ref{eqn-badone}).

\bigskip\bigskip
\bibliography{h2}
\bibliographystyle{alpha}

\end{document}